\newtheorem{theorem}{Theorem}[section]
\newtheorem{corollary}[theorem]{Corollary}
\newtheorem{lemma}[theorem]{Lemma}
\newtheorem{definition}[theorem]{Definition}
\newtheorem{algorithm}[theorem]{Algorithm}
\newtheorem{remark}[theorem]{Remark}
\newcommand{\rd}{\,\mathrm{d}}
\newcommand{\bsf}{\boldsymbol{f}}
\newcommand{\bsg}{\boldsymbol{g}}
\newcommand{\bsh}{\boldsymbol{h}}
\newcommand{\bsx}{\boldsymbol{x}}
\newcommand{\bsy}{\boldsymbol{y}}
\newcommand{\bsz}{\boldsymbol{z}}
\newcommand{\bszeta}{\boldsymbol{\zeta}}
\newcommand{\bszero}{\boldsymbol{0}}
\newcommand{\Pcal}{\mathcal{P}}
\newcommand{\icomp}{\texttt{i}}
\def\bN{\mathbb N}
\def\bZ{{\mathbb Z}}
\def\bR{{\mathbb R}}
\def\bC{{\mathbb C}}
\def\bF{{\mathbb F}}
\def\F2{{\mathbb F}_2}
\def\bsx{{\boldsymbol{x}}}
\def\bsy{{\boldsymbol{y}}}
\newcommand{\bbX}{\mathbb{X}}
\newcommand{\Fb}{\mathbb{F}_b}
\newcommand{\Lfield}{\Fb((x^{-1}))}
\newcommand{\Pring}{\Fb[x]}
\newcommand{\Lring}{\Fb[[x^{-1}]]}
\newcommand{\Ptset}[1][\bsf]{\mathcal{P}_{{#1},d}}
\newcommand{\Ucube}[1][d]{\mathbb{U}^{#1}}
\newcommand{\TrBox}[2][d]{\Ucube[#1]_{#2}}
\newcommand{\Ppart}[1]{\left(#1\right)_{\mathrm{poly}}}
\newcommand{\Res}{\mathrm{res}}
\newcommand{\Dnet}[2][{}]{\mathcal{D}_{#1}(#2)}
\newcommand{\Dlat}[1]{{#1}^*}
\newcommand{\DinL}[1]{{#1}^\perp}
\newcommand{\Dgp}[1]{{#1}^{\wedge}}
\newcommand{\minNRT}[2][{}]{\mathrm{minNRT}_{#1}(#2)}
\newcommand{\omegab}{\omega_b}
\newcommand{\PtoPmap}[1]{\theta_{#1}}
\newcommand{\bNm}{\mathbb{Z}_{\leq 0}}
\newcommand{\trunc}{\mathrm{tr}}
\newcommand{\adnum}{M}
\newcommand{\Diagm}[1]{D_{#1}}
\newenvironment{enuroman}{\begin{enumerate}[\normalfont (i)]}{\end{enumerate}}
\begin{document}

\title{Digital net properties of a polynomial analogue of Frolov's construction}
%\title{On admissible (digital) nets and their equidistribution}
\author{Josef Dick\thanks{The research of J. Dick was supported under the Australian Research Councils Discovery Projects funding scheme (project number DP150101770).}, Friedrich Pillichshammer\thanks{The research of F. Pillichshammer was supported by the Austrian Science Fund (FWF): Project F5509-N26 which is a part of the Special Research Program ``Quasi-Monte Carlo Methods: Theory and Applications''.}, Kosuke Suzuki
\thanks{The research of K. Suzuki was supported by Australian Research Council’s Discovery Projects funding scheme (project number DP150101770), JST CREST and JSPS Grant-in-Aid for JSPS Fellows (No. 17J00466).}, \\
Mario Ullrich, Takehito Yoshiki
\thanks{The research of T. Yoshiki was supported by Australian Research Council’s Discovery Projects funding scheme (project number DP150101770), JST CREST and JSPS Grant-in-Aid for JSPS Fellows (No. 17J02651).}}
\date{\today}

\maketitle

\begin{abstract}
Frolov's cubature formula on the unit hypercube has been considered important
since it attains an optimal rate of convergence for various function spaces.
Its integration nodes are given by shrinking a suitable full rank $\bZ$-lattice 
in $\bR^d$ and taking all points inside the unit cube.
The main drawback of these nodes is that they are hard to find computationally, 
especially in high dimensions.In such situations, quasi-Monte Carlo (QMC) rules based on digital nets have proven to be successful. However, there is still no construction known that leads to QMC rules which are optimal in the same generality as Frolov's.

In this paper we investigate a polynomial analog of Frolov's cubature formula, 
which we expect to be important in this respect.
This analog is defined in a field of Laurent series with 
coefficients in a finite field. A similar approach was previously studied in [M.~B.~Levin.
Adelic constructions of low discrepancy sequences. Online Journal of Analytic Combinatorics. Issue 5, 2010.].

We show that our construction is a $(t,m,d)$-net, which also implies 
bounds on its star-discrepancy and the error of the corresponding 
cubature rule.
Moreover, we show that our cubature rule is a QMC rule, 
whereas Frolov's is not, and provide an algorithm to determine its 
integration nodes explicitly.

To this end we need to extend the notion of $(t,m,d)$-nets 
to fit the situation 
that the points can have infinite digit expansion 
and develop a duality theory. 
Additionally, we adapt the notion of admissible lattices to our setting 
and prove its significance.
\end{abstract}

\noindent\textbf{Keywords:} Digital nets, polynomial lattices, numerical integration, quasi-Monte Carlo, Frolov's cubature.

\noindent\textbf{2010 MSC:} 11K31, 11K38, 11P21, 65D30.

\section{Introduction}\label{sec:Introduction}
In this paper we consider numerical integration on the $d$-dimensional unit cube
\[
\int_{[0,1]^d} \psi(\bsx) \rd\bsx
\]
approximated by an algorithm using $n$ function evaluations as
\[
\sum_{i=1}^n w_i\psi(\bsx_i) \quad \text{for $w_i \in \bR$, $\bsx_i \in [0,1]^d$}.
\]
If the weights satisfy $w_i = n^{-1}$ for all $i$, the algorithm is called a quasi-Monte Carlo (QMC) rule.
For QMC rules, lattice rules (i.e., QMC rules using integration lattices)
and digital net rules (i.e., those using digital nets) have been mainly considered,
see the books \cite{Dick2010dna,Niederreiter1992rng,Sloan1994lmm} and the references therein.
One intensively studied class of digital net rules is polynomial lattice rules
first proposed in \cite{Niederreiter1992ldp}.
Polynomial lattice rules are a polynomial analog of lattice rules,
where polynomial analog means that $\bR$ and $\bZ$ lattice rules are replaced
by a field of Laurent series $\Fb((x^{-1}))$ and a ring of polynomials $\Fb[x]$.

On the other hand, one important non-QMC rule is Frolov's cubature formula, see Section~\ref{sec:Frolov}.
The fascinating property of this cubature rule is that, although the construction is fixed, it attains an optimal rate of convergence for various function spaces.
%In other words, one obtains an optimal algorithm without knowing the specific 
%setting in advance. 
This, in particular, applies to functions with high smoothness. (A similar property is now also known for digital nets, see \cite{GSY16, GSY17}.) However, it remains a difficult problem to determine which points from the shrunk lattice belong to the unit cube $[0,1]^d$.

In this paper we 
make a first attempt to construct an universal digital net by establishing 
a polynomial analog of Frolov's cubature formula.
%Let $b$ be a prime and $\Fb$ the field of $b$ elements.
Again polynomial analog means that $\bR$ and $\bZ$ in Frolov's cubature formula 
are replaced by $\Fb((x^{-1}))$ and $\Fb[x]$, respectively, 
where $b$ is a prime and $\Fb$ is the field of $b$ elements.
Further, 
$[0,1]^d$ is replaced by $\Ucube[d]_b$, where $\Ucube[]_b$ consists of all the 
elements of the form $\sum_{i=1}^{\infty} c_i x^{-i}$ with $c_i \in \Fb$.
Thus the integration nodes of our cubature formula are given as follows:
choose a suitable full rank $\Fb[x]$-lattice $\bbX$ in $\Fb((x^{-1}))^d$,
polynomially shrink it, restrict it to $\Ucube[d]_b$,
and map it to the unit cube by replacing $x^{-1}$ by $b^{-1}$, 
see Definition~\ref{def:P} below. Here we mention the papers of Levin~\cite{Lev2010,Lev2017} who studied a similar approach but with different techniques. 

Let $P=P_{\bsf}:=\phi(\bsf^{-1} \bbX)\cap[0,1]^d$ be the point set that is constructed this way, 
see~\eqref{eq:map} and~\eqref{eq:def-P}, 
where $\bsf\in \Lfield^d$ is the shrinking factor, 
and let
\[
Q_P(\psi) \,=\, \frac{1}{|P|}\sum_{\bsx\in P} \psi(\bsx)
\]
be the corresponding QMC rule. 

We will prove that, if the underlying lattice $\bbX$ is {\em admissible} 
(Definition~\ref{def:adm}), then for a sufficiently large shrinking factor, 
we have that $P$ is a $(t,m,d)$-net (Theorem~\ref{thm:t-value-formula}).
Moreover, we show that the sets $P$ can be computed explicitly 
(Theorem~\ref{thm:number-of-points}) 
and provide an explicit construction of an admissible lattice 
that fulfills our assumptions 
(Theorem~\ref{thm:construction}).

The equidistribution property of the point set above implies a bound
on the star-discrepancy, which is a quantitative measure for the irregularity 
of distribution of a point set $P$ in $[0,1]^d$. It is defined as 
\[
D^*(P)=\sup_{J} \left| \frac{|P \cap J|}{|P|}-\lambda_d(J)\right|,
\]
where the supremum is extended over all axes-parallel boxes 
$J \subseteq [0,1)^d$ with one vertex anchored in the origin and where 
$\lambda_d(J)$ denotes the volume of the box $J$. 
Furthermore, $|P \cap J|$ is the number of elements of $P$ that belong to $J$. 

We can deduce from Corollary~\ref{cor:disc} that for a fixed admissible 
lattice $\bbX$ and for all $N$ of the form $b^n$ 
($b$ prime, $n$ large enough) there exists a shrinking factor $\bsf$ such 
that $N=|P_{\bsf}|$ and 
\[
D^*(P_{\bsf}) \;\le\; c\,\frac{(\log N)^{d-1}}{N},
\]
where the constant $c=c(\bbX)$ does not depend on $N$ (or $\bsf$), 
but possibly on $b$ and $d$.
Using the Koksma-Hlawka inequality we likewise obtain 
\[
\left|Q_{P_{\bsf}}(\psi) - \int_{[0,1]^d}\psi(\bsx) \rd\bsx\right| 
\;\le\; c\,\frac{(\log N)^{d-1}}{N} \;{\rm Var}_{HK}(\psi),
\]
where ${\rm Var}_{HK}$ denotes the variation in the sense of Hardy and Krause, see \cite{Niederreiter1992rng}.

There are also other constructions of digital nets that achieve these bounds
including the Sobol$'$ sequence \cite{Sobolcprime1967dpi},
Faure sequence \cite{Faure1982dds},
Niederreiter sequence \cite{Niederreiter1988ldl} 
and Niederreiter-Xing sequence \cite{Niederreiter2001rpo}.

Our construction is now by scaling of a fixed (admissible) digital net with 
infinite digit expansion, which is very much like Frolov's cubature rule. 
%Therefore, we hope that our cubature formula has similar universality 
%properties as Frolov's. We leave it open for future research. 
We focus on the $(t,m,d)$-net property of our integration nodes, which turn out to be a special case of
digital nets with infinite digit expansion \cite{Goda2016dni}.
%Hence, in this paper we extend studies of the $(t,m,d)$-net property of digital nets to our setting.
We extend the notion of the $(t,m,d)$-net property and duality theory, which relates the $t$-value of a digital net to the minimum Niederreiter-Rosenbloom-Tsfasman (NRT) weight of its dual net.
From this duality theory we obtain a property for a full rank $\Fb[x]$-lattice
such that the integration nodes constructed from the lattice have bounded $t$-values.
%Note that analogous property is needed for Frolov's cubature formula.
%Further we construct such lattices satisfying the property when the dimension $d$ is a power of $b$. 
As a consequence, we obtain that our cubature rules are QMC rules using a sequence of digital nets with infinite digit expansion with low discrepancy. %One would expect for our cubature formula to be universal as same as Frolov's. We leave it open for future research.

The rest of the paper is organized as follows.
In Section~\ref{sec:Frolov} we recall Frolov's  cubature formula.
In Section~\ref{sec:notation} we introduce notation for our cubature formula,
a polynomial analog of Frolov's, 
and adapt the notation from admissible lattices in $\bR^d$ 
to $\Fb((x^{-1}))^d$.
In Section~\ref{sec:points-in-cube},
we give the number of nodes in the unit cube precisely
and an algorithm to compute them when the shrinking factor is sufficiently large.
In Section~\ref{sec:group} we give preliminary results for the underlying 
structure of the integration nodes.
In Section~\ref{sec:du_th} we establish the corresponding duality theory 
and in Section~\ref{sec:construction} we give an explicit construction of 
a suitable full rank $\Fb[x]$-lattice.

%%%%%%%%%%%%%%%%%%%%%%%%%%%%%%%%%%%%%%%%%%%%%%%%%%%

\section{Frolov's cubature formula}\label{sec:Frolov}
Since our construction is inspired by Frolov's cubature formula~\cite{Fr76,Fr80}, 
we first introduce it here. We follow the presentation in 
\cite{Ullrich2016ueb}, see also~\cite{Te93}.

Let $\bbX \subset \bR^d$ be a $d$-dimensional lattice, i.e., there exists an invertible $d \times d$ matrix $T$ over $\bR$ such that
\[
\bbX=T(\bZ^d)=\{T\bsx \ : \  \bsx \in \bZ^d\}.
\]
For every real number $a \neq 0$,
the multiple $a\bbX = \{a \bsx \ : \  \bsx \in \bbX\}$ is again a $d$-dimensional lattice.
A \emph{general lattice rule} is an equally-weighted cubature rule using 
points of a shrunk lattice $a^{-1}\bbX$, with  $a>1$, inside the unit cube 
$[0,1)^d$, i.e.,
\begin{equation}\label{eq:Frolalg}
Q_a(\psi)
= \frac{\det(T)}{a^d} \sum_{\bsx \in \Pcal_{a,d}} \psi(\bsx) 
%\quad \text{ for some $a>1$.}
\end{equation}
with
\[
\Pcal_{a,d}=a^{-1}\bbX \cap [0,1)^d.
\]
It is well-known that $|\Pcal_{a,d}| \sim a^d \det(T^{-1})$, i.e., $|\Pcal_{a,d}|/(a^d \det(T^{-1}))$ tends to 1 as $a$ tends to infinity.
%, see Skriganov~\cite[Theorem~1.1]{Skriganov1994cud}.

It remains to specify the \emph{generator} $T\in\bR^{d\times d}$ 
of the lattice $\bbX$. 
The choice of Frolov~\cite{Fr76} was as follows:
Define the polynomial $p_d \in \bZ[x]$, $p_d(x)=-1+\prod_{j=1}^d (x-2j+1)$.
Then $p_d$ has $d$ different real roots, say $\xi_1, \ldots, \xi_d \in \bR$.
With these roots he defined the $d \times d$ Vandermonde matrix $B$ by
\[
B=(B_{i,j})_{i,j=1}^d =(\xi_i^{j-1})_{i,j=1}^d \ \ \ \mbox{ and }\ \ \ T = (B^{-1})^\top.\] 

These lattices have many interesting properties. 
In particular, the corresponding cubature rule $Q_a$, after a suitable 
modification~\cite{NUU17}, exhibits 
the optimal order of convergence in many function classes, 
see e.g.~\cite{Du97,Fr76,Te93,Ullrich2017mc,Ullrich2016rfc} 
for results on Sobolev, Besov and Triebel-Lizorkin spaces.
Moreover, the sets $\Pcal_{a,d}$ satisfy certain optimal discrepancy 
bounds~\cite{Fr80,Skriganov1994cud}.

The important property of such lattices can be described best in terms 
of the \emph{dual lattice} $\DinL{\bbX}$ 
of $\bbX$. That is,  
\[
\DinL{\bbX}
:= \{\bsy \in \bR^d \ : \  \bsx \cdot \bsy \in \bZ \text{ for all $\bsx \in \bbX$}\},
\]
where $\bsx\cdot\bsy$ is the usual inner product of $\bsx,\bsy \in \mathbb{R}^d$.
The dual lattice associated with the lattice $\bbX=T(\bZ^d)$
is given by $\DinL{\bbX}= B(\bZ^d)$ with $B = (T^{-1})^\top$.
The lattices from above then satisfy
\[
\prod_{i=1}^d |z_i| \,\geq\, c \,>\,0
\qquad\text{ for all }\quad \bsz=(z_1,\dots,z_d)\in\DinL{\bbX}\setminus\{\bszero\}
\]
%for all $\bsz=(z_1,\dots,z_d)\in\DinL{\bbX}\setminus\{0\}$ 
and some $c>0$. 
Lattices that satisfy this property are called \emph{admissible lattices}. 
See e.g.~\cite{KOUU17,KOU17,Ullrich2016rfc} for additional properties and other constructions.

%The fascinating property of admissible lattices is that, 
%although the construction is fixed, they allow for 
%optimal cubature rules in various spaces.
%In other words, one obtains an optimal algorithm without knowing the specific 
%setting in advance. 
%A similar property is not known for digital nets.

\section{Notation and definition of admissible digital nets}\label{sec:notation}

%In this section, we introduce our cubature formula, a polynomial analog of Frolov's.
%We use the following notation.
Throughout let $\bR$, $\bZ$, $\bNm$ and $\bN$ be the set of real numbers,
integers, non-positive integers and positive integers, respectively.
Let $b$ be a prime number and let $\Fb$ denote the finite field of order $b$. We identify $\Fb$ with the set $\bZ_b=\{0,1,\ldots,b-1\}$ equipped with arithmetic operations modulo $b$.
Let $\Fb((x^{-1}))$ be the field of Laurent series over $\Fb$. Elements of $\Fb((x^{-1}))$ are formal Laurent series of the form 
\begin{equation}\label{Lseries}
f=\sum_{i=w}^{\infty} c_i x^{-i},
\end{equation}
where $w$ is an arbitrary integer and all $c_i \in \Fb$.
Furthermore, let $\Fb[x]$ be the ring of polynomials over $\Fb$.

We expand the notion of degree from $\Fb[x]$ to $\Fb((x^{-1}))$:  
For $f$ as in \eqref{Lseries} we define $\deg(f) := -w$ if $f\not=0$ and 
$w$ is the least index with $c_w\not=0$. 
For $f=0$, we define $\deg(0) = -\infty$. 
We remark that often the degree $\deg$ is also called the {\it discrete 
exponential valuation} on $\Fb((x^{-1}))$. It is easy to see, that 
for $f,g \in \Fb((x^{-1}))$ we have 
$$\deg(fg)=\deg(f)+\deg(g) \ \ \mbox{ and }\ \ \deg(f+g) \le \deg(f)+\deg(g).$$

For $d \in \bN$ we define the {\it digital unit cube} as $$\Ucube_b := \{ (f_1, \dots, f_d) \in \Lfield^d \ : \  \deg(f_j) < 0 \text{ for all $j$} \}.$$
The field $\Lfield$ is related to $\bR$ through the map $\phi$ defined as
\begin{equation}\label{eq:map}
\phi \colon \Lfield \to \bR, \quad \sum_{i=w}^{\infty} c_i x^{-i} \mapsto \sum_{i=w}^{\infty} c_i b^{-i}.
\end{equation}
We also consider its truncated version $\phi_n$ defined as
\[
\phi_n \colon \Ucube[1]_b \to [0,1), \quad \sum_{i=1}^{\infty} c_i x^{-i} \mapsto \sum_{i=1}^{n} c_i b^{-i}.
\]

Let $\bbX$ be a $d$-dimensional $\Pring$-lattice in $\Lfield^d$.
That is, there exists $T \in {\rm GL}_d\big(\Lfield\big)$,
where ${\rm GL}_d(\Lfield)$ is the set of $d \times d$ invertible matrices over $\Lfield$,
such that
\[
\bbX=\bbX_T:=T(\Pring^d)=\{T\bsg \ : \  \bsg \in \Pring^d\}.
\]
For $\bsf = (f_1, \dots, f_d) \in \Lfield^d$,
the multiple $$\bsf \bbX := \{ (f_1 g_1,\ldots,f_d g_d) \ : \ (g_1,\ldots,g_d) \in \bbX\}$$
is again a $d$-dimensional $\Pring$-lattice in $\Lfield^d$.
The generating matrix of $\bsf \bbX$ is given by $\Diagm{\bsf} T$,
where $\Diagm{\bsf} = {\rm diag}(f_1, \dots, f_d)$ is the $d \times d$ diagonal matrix
whose diagonal entries are $f_1, \dots, f_d$.

\begin{definition}\label{def:P}
For a $d$-dimensional $\Pring$-lattice $\bbX=\bbX_T$ in $\Lfield^d$ and for $\bsf = (f_1, \dots, f_d)\in \Lfield^d$ with $\deg(f_j) \geq 0$
we denote by $\Ptset$ all points of a polynomially shrunken lattice $\bsf^{-1}\bbX$ inside the digital unit cube $\Ucube_b$, i.e.,
\begin{equation}\label{eq:def-P}
\Ptset := \bsf^{-1} \bbX \cap \Ucube_b,
\end{equation}
where $\bsf^{-1}=(f_1^{-1},\ldots,f_d^{-1})$, and $f^{-1}$ denotes the inverse of $f$ in $\Lfield$.
In this context we call $\bsf$ the {\it shrinking factor}.
\end{definition}

The elements from $\Ptset$ correspond to nodes in $[0,1]^d$ through the map $\phi$.
Note that if $\bsf  = (x^a, \dots, x^a)$ with $a \in \bZ$
then $\phi(\bsf^{-1} \bbX)$ is a geometrical shrinking of $\phi(\bbX)$ in 
$\bR^d$ with shrinking factor $b^{-a}$.

Analogous to \eqref{eq:Frolalg}, we consider a cubature rule with equal weights as
\begin{equation}\label{eq:ouralg}
Q_{\bsf}(\psi)= \frac{b^{\deg(\det(T))}}{b^{\sum_{j=1}^d \deg(f_j)}} \sum_{\bsx \in \phi(\Ptset)} \psi(\bsx).
\end{equation}
We will show in Theorem~\ref{thm:t-value-formula} that $b^{\sum_{j=1}^d \deg(f_j) - \deg(\det(T))} = |\phi(\Ptset)|$ and hence this rule is a QMC rule.

In the following we investigate the $(t,m,d)$-net property for the set 
$\phi(\Ptset)$ for a properly chosen matrix $T$.
The point set $\Ptset$ is closed under summation in $\Lfield$.
Since an element in $\Lfield$ can have an infinite digit expansion, the point set
$\phi(\Ptset)$ is not always closed under digit-wise addition in $[0,1)^d$.
This motivates us to define the notion of digital nets in $\Ucube_b$.
Note that digital nets with infinite digit expansion were already 
considered in~\cite{Goda2016dni}.

First we recall the definition of $(t,m,d)$-nets in base $b$ according to Niederreiter~\cite{Niederreiter1992rng}.

\begin{definition}[Niederreiter]\label{def:tms-net-cube}
Let $b \geq 2$, $m \geq 1$, $0 \leq t \leq m$, and $d \geq 1$ be integers.
A point set $P = \{\bsx_0, \dots , \bsx_{b^m - 1}\}$ in $[0,1)^d$
is called a $(t,m,d)$-net in base $b$
if for all nonnegative integers $l_1, \dots , l_d$
with $l_1 + \dots + l_d = m - t$
the elementary intervals
\[
\prod_{j=1}^{d}{\left[\frac{a_j}{b^{l_j}} , \frac{a_j +1}{b^{l_j}}\right)}
\]
contain exactly $b^t$ points of $P$ for all choices of
$0 \leq a_j < b^{l_j}$ $(a_j \in \bZ)$ for $1 \leq j \leq d$.
\end{definition}

\medskip

We now define a $(t,m,d)$-net in $\Ucube_b$.
\begin{definition}\label{def:tms-net-ring}
Let $b \geq 2$ be a prime and $m \geq 1$, $0 \leq t \leq m$, and $d \geq 1$ be integers.
A point set 
%$\Pcal = \{\bsx_0, \dots , \bsx_{b^m - 1}\} \subseteq \Ucube_b$
$\Pcal = \{\bsg_0, \dots , \bsg_{b^m - 1}\}$ in $\Ucube_b$
is called a $(t,m,d)$-net in $\Ucube_b$
if for all nonnegative integers $l_1, \dots , l_d$
with $l_1 + \dots + l_d = m - t$
the ``elementary intervals''
\[
\big\{\bsg=(g_1, \dots, g_d)\in\Ucube_b \ : \  \deg (g_j - h_j) < -l_j  
\  \text{for all $1 \leq j \leq d$} \big\}
\]
contain exactly $b^t$ points of $\Pcal$ for all choices of
$h_j \in \TrBox[1]{b,l_j}$ for $1 \leq j \leq d$, where
\[
\TrBox[1]{b,l_j} = \{c_1 x^{-1} + c_2 x^{-2} + \cdots + c_{l_j} x^{-l_j} \ : \  c_1, c_2, \ldots, c_{l_j} \in \mathbb{F}_b \}.
\]
\end{definition}

The following lemma provides a relation between the above two definitions. We omit the easy proof.

\begin{lemma}\label{lem:def_net}
A set $\mathcal{P}$ is a $(t,m,d)$-net in $\Ucube_b$
if and only if $\phi_m(\mathcal{P})$ is a $(t,m,d)$-net in base $b$.
\end{lemma}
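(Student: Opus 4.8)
The plan is to set up an explicit digit-wise correspondence between the ``elementary intervals'' appearing in Definition~\ref{def:tms-net-ring} and those in Definition~\ref{def:tms-net-cube}, and then to check that the truncation $\phi_m$ carries a point lying in one of them to a point lying in the other. Throughout, point sets are read as multisets, so cardinalities are counted with multiplicity; in particular $\phi_m(\mathcal{P})$ has $b^m$ points whenever $\mathcal{P}$ does.

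First I would rewrite the defining condition of an elementary interval in $\Ucube_b$. For $\bsg = (g_1,\dots,g_d) \in \Ucube_b$ write $g_j = \sum_{i\geq 1} d_i^{(j)} x^{-i}$, and for $h_j = \sum_{i=1}^{l_j} c_i^{(j)} x^{-i} \in \TrBox[1]{b,l_j}$ observe that $\deg(g_j - h_j) < -l_j$ holds precisely when $d_i^{(j)} = c_i^{(j)}$ for $i = 1,\dots,l_j$. Hence the elementary interval with data $(l_1,\dots,l_d)$ and $(h_1,\dots,h_d)$ is exactly the set of $\bsg \in \Ucube_b$ whose first $l_j$ digits in the $j$-th coordinate coincide with the digits of $h_j$, for every $j$. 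Next, since $l_1 + \dots + l_d = m - t$ and $t \geq 0$ we have $l_j \leq m$ for all $j$, so $\phi_m$ retains all of those prescribed digits. Putting $a_j := \sum_{i=1}^{l_j} c_i^{(j)} b^{l_j - i}$, which runs bijectively over $\{0,1,\dots,b^{l_j}-1\}$ as $h_j$ runs over $\TrBox[1]{b,l_j}$, a short estimate showing that the tail $\sum_{i=l_j+1}^{m} d_i^{(j)} b^{-i}$ lies in $[0, b^{-l_j})$ yields $\phi_m(g_j) \in [a_j/b^{l_j}, (a_j+1)/b^{l_j})$ if and only if $d_i^{(j)} = c_i^{(j)}$ for $i = 1,\dots,l_j$. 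Consequently $\bsg$ lies in the elementary interval of $\Ucube_b$ with the above data if and only if $\phi_m(\bsg)$ lies in the elementary interval $\prod_{j=1}^{d}[a_j/b^{l_j}, (a_j+1)/b^{l_j})$ of $[0,1)^d$.

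With this bijection between the two sets of data in hand, for every admissible choice of $(l_1,\dots,l_d)$ with $\sum_j l_j = m - t$ and every choice of $(h_1,\dots,h_d)$ the elementary interval in $\Ucube_b$ contains exactly as many points of $\mathcal{P}$ as the corresponding cube elementary interval contains points of $\phi_m(\mathcal{P})$. Hence $\mathcal{P}$ satisfies the requirement of Definition~\ref{def:tms-net-ring} (every such interval contains exactly $b^t$ points) if and only if $\phi_m(\mathcal{P})$ satisfies that of Definition~\ref{def:tms-net-cube}, which is the assertion. The only point demanding a little care is the tail estimate: it must rule out the familiar ambiguity of base-$b$ expansions (such as $0.0\,\overline{b-1} = 0.1$), and it does so because $\phi_m(\mathcal{P}) \subset [0,1)$ consists of numbers with a finite base-$b$ expansion of length at most $m$. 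I expect this to be the only, and rather minor, subtlety in the argument.
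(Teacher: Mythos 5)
Your proof is correct, and it supplies exactly the argument the paper elides with ``We omit the easy proof.'' The reduction of the degree condition $\deg(g_j - h_j) < -l_j$ to agreement of the first $l_j$ digits, the observation that $l_j \le m-t \le m$ so that $\phi_m$ preserves those digits, the bijection $h_j \leftrightarrow a_j$, and the tail estimate $\sum_{i=l_j+1}^{m} d_i^{(j)} b^{-i} \in [0, b^{-l_j})$ together give a clean one-to-one correspondence between elementary intervals in $\Ucube_b$ and elementary intervals in $[0,1)^d$, and your remark that $\phi_m(\bsg)$ has a finite expansion of length $m$ (hence no ambiguity of the form $0.0\overline{b-1} = 0.1$) is precisely the detail needed to make the backward implication airtight. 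Reading both $\mathcal{P}$ and $\phi_m(\mathcal{P})$ as multisets is also the right call, since $\phi_m$ need not be injective on $\mathcal{P}$.
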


\begin{remark}\label{rem:dnet-DM}\rm
By Lemma~\ref{lem:def_net}, Definition~\ref{def:tms-net-ring} can be viewed as an extension of \cite[Definition~2.4]{Dick2013fcw}, where $(t,m,d)$-nets in $\Fb^{d \times n}$ are introduced for $n \geq m$.
If $\Pcal \subset \TrBox{b,n} \ (\simeq \Fb^{d \times n})$ then \cite[Definition~2.4]{Dick2013fcw} and Definition~\ref{def:tms-net-ring} coincide.
\end{remark}

Analogous to the notion of admissible lattice in $\bR^d$, 
see Section~\ref{sec:Frolov}, we define the notion of 
\emph{admissible $\Pring$-lattices in $\Lfield^d$}.

For a $d$-dimensional $\Pring$-lattice $\bbX$ in $\Lfield^d$ the dual lattice $\DinL{\bbX}$ is defined as
\[
\DinL{\bbX}
:= \Big\{\bsg \in \Lfield^d \ : \ \bsg \cdot \bsh \in \Pring \text{ for all $\bsh \in \bbX$}\Big\},
\]
where $\bsg \cdot \bsh$ is the usual inner product of $\bsg, \bsh \in \Lfield^d$,
given by $\bsg \cdot \bsh = \sum_{j=1}^d g_j h_j$.
The dual lattice associated with the lattice $\bbX=T(\Pring^d)$
is given by $\DinL{\bbX}= B(\Pring^d)$ with $B = (T^{-1})^\top$.
We define
\begin{equation}
\adnum(\bbX)
:= \inf_{(g_1, \dots, g_d) \in \DinL{\bbX}\setminus \{\bszero\}}  \sum_{j=1}^d \deg(g_j).
\end{equation}

\begin{definition}\label{def:adm}
We say that $\bbX$ is admissible if $\adnum(\bbX) > -\infty$.
\end{definition}

\begin{theorem}\label{thm:t-value-formula}
Let $\bbX$ be a $d$-dimensional admissible lattice with the generating 
matrix 
%$T \in {\rm GL}_n(d)$,
$T\in {\rm GL}_d\big(\Lfield\big)$.
%with $\DinL{\bbX}$ admissible.
If the shrinking factor $\bsf$ is ``sufficiently large'' (more explicit, if it satisfies condition \eqref{eq:condition-shrinking-factor} below),
then $\Ptset$ is a $(t,m,d)$-net in $\Ucube_b$ with
$$t = - \deg(\det T) - \adnum(\bbX) - d + 1$$
and
$$m = - \deg(\det T) + \sum_{j=1}^{d} \deg (f_j).$$
In particular, the $t$-values of point sets made from a sufficiently shrunk 
admissible lattice are uniformly bounded  and the corresponding cubature rule $Q_{\bsf}$ from~\eqref{eq:ouralg} 
is a QMC rule.
\end{theorem}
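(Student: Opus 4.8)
The plan is to derive the net property from the duality theory of Section~\ref{sec:du_th}, which expresses the $t$-value of a digital net in $\Ucube_b$ (possibly with infinite digit expansion) in terms of the minimal NRT weight of its dual net, and then to identify that dual net with a suitably scaled copy of the dual lattice $\DinL{\bbX}$. First I would pin down the cardinality. Multiplication by $\bsf$ identifies $\Ptset$ with $\bbX \cap \bsf\,\Ucube_b$, i.e.\ with the lattice points whose $j$-th coordinate has $\deg < \deg(f_j)$; by the results of Section~\ref{sec:group} this is an $\Fb$-linear space. A density/counting argument (Section~\ref{sec:points-in-cube}, Theorem~\ref{thm:number-of-points}) then shows that, once $\bsf$ satisfies \eqref{eq:condition-shrinking-factor}, its $\Fb$-dimension is \emph{exactly} $m=-\deg(\det T)+\sum_{j=1}^d\deg(f_j)$: heuristically $\bbX$ has covolume $b^{\deg(\det T)-d}$ while $\bsf\,\Ucube_b$ has measure $b^{\sum_j\deg(f_j)-d}$, and admissibility ($\adnum(\bbX)>-\infty$, Definition~\ref{def:adm}) controls the fluctuation of the count through the dual lattice, so the asymptotic count becomes exact for large $\bsf$. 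In particular $|\phi(\Ptset)|=b^m$, which is precisely what makes the prefactor in \eqref{eq:ouralg} equal to $1/|\phi(\Ptset)|$, so that $Q_{\bsf}$ is a QMC rule.

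Next I would invoke the duality theory: by Lemma~\ref{lem:def_net} it suffices to treat $\Ptset$ as a $(t,m,d)$-net in $\Ucube_b$, and Section~\ref{sec:du_th} yields $t=m+1-\minNRT{\DinL{\Ptset}}$, where the dual net $\DinL{\Ptset}$ is (a truncation of) the dual of the shrunken lattice. Since the inner product satisfies $(\bsf^{-1}\bsg)\cdot\bsh=\bsg\cdot(\bsf^{-1}\bsh)$, one has $(\bsf^{-1}\bbX)^\perp=\bsf\,\DinL{\bbX}$, so the relevant dual object is the scaled dual lattice $\bsf\,\DinL{\bbX}$; one checks that its minimal-weight vectors are recorded faithfully by $\DinL{\Ptset}$ provided $\bsf$ (hence $m$) is large enough, so that $\minNRT{\DinL{\Ptset}}=\minNRT{\bsf\,\DinL{\bbX}}$.

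It then remains to compute $\minNRT{\bsf\,\DinL{\bbX}}$. A nonzero element is $\bsf\bsg$ with $\bszero\neq\bsg\in\DinL{\bbX}$, and here one uses the key observation that admissibility forces \emph{every} nonzero $\bsg\in\DinL{\bbX}$ to have all coordinates nonzero — otherwise $\sum_j\deg(g_j)=-\infty$ and $\adnum(\bbX)=-\infty$. Hence the NRT weight of $\bsf\bsg$ is $\sum_{j=1}^d(\deg(f_jg_j)+1)=d+\sum_{j=1}^d\deg(f_j)+\sum_{j=1}^d\deg(g_j)$; minimising over $\bsg$, and using that the infimum defining $\adnum(\bbX)$ is attained (a finiteness-of-short-vectors argument for the lattice $\DinL{\bbX}$), gives $\minNRT{\bsf\,\DinL{\bbX}}=d+\sum_{j=1}^d\deg(f_j)+\adnum(\bbX)$. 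Substituting this and the value of $m$ into $t=m+1-\minNRT{\DinL{\Ptset}}$ produces $t=-\deg(\det T)-\adnum(\bbX)-d+1$, which depends on $\bbX$ (and $b,d$) but not on $\bsf$ — giving the uniform bound — and combined with $|\phi(\Ptset)|=b^m$ we get the QMC assertion.

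I expect the main obstacle to be making ``sufficiently large shrinking factor'' quantitatively precise, i.e.\ producing condition \eqref{eq:condition-shrinking-factor} and verifying that it suffices simultaneously on two fronts: so that the point count is not merely asymptotically but exactly $b^m$, and so that the finitely many weight-minimising vectors of $\bsf\,\DinL{\bbX}$ lie within the digit range used to represent $\Ptset$ as a net via $\phi_m$, so that none of them is lost under truncation (this is really a statement about the individual coordinate degrees of the short vectors of $\DinL{\bbX}$, which must be controlled even though $\adnum(\bbX)$ only bounds their sum). The secondary but essential subtlety — the ``all coordinates nonzero'' consequence of admissibility — is what makes the NRT weight and the degree-sum in $\adnum(\bbX)$ line up; the rest is bookkeeping with the valuation $\deg$.
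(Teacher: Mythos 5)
Your proposal follows essentially the same route as the paper: count the points via Theorem~\ref{thm:number-of-points}, identify $\DinL{(\bsf^{-1}\bbX)}=\bsf\,\DinL{\bbX}$ so that $\adnum(\bsf^{-1}\bbX)=\adnum(\bbX)+\sum_j\deg(f_j)$, and invoke the duality Theorem~\ref{thm:duality-infty} to translate a lower bound on the minimum NRT weight of the dual net into an upper bound on the $t$-value. One small caution: the dual \emph{net} of $\bsf^{-1}\bbX\cap\Ucube_b$ is not $\bsf\,\DinL{\bbX}$ itself but (by Lemma~\ref{lem:dnet}) its image under $\Ppart{\cdot}$, and the NRT weight of a coordinate $\Ppart{f_jg_j}$ equals $1+\deg(f_jg_j)$ only when $\deg(f_jg_j)\ge 0$, being $0$ otherwise; thus $\sum_j(1+\deg(f_jg_j))$ is only a \emph{lower} bound on $\mu\bigl(\Ppart{\bsf\bsg}\bigr)$, and consequently $\minNRT{\bsf^{-1}\bbX\cap\Ucube_b}\ge d+\sum_j\deg f_j+\adnum(\bbX)$ is an inequality, not the equality you assert. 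This is harmless because Theorem~\ref{thm:duality-infty} only requires the lower bound to yield the stated $t$ (a $(t,m,d)$-net is automatically a $(t',m,d)$-net for $t'\ge t$), so your "all coordinates nonzero" observation and the attained-infimum argument are not needed; the paper likewise proves only the inequality. Your worry about losing minimising vectors under truncation is also already absorbed into the machinery of Section~\ref{sec:du_th} (Lemma~\ref{lem:NRT-equiv-trunc}), so it does not require an extra hypothesis on $\bsf$ beyond \eqref{eq:condition-shrinking-factor}.
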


A similar result has been obtained by Levin \cite[Theorem~3.2]{Lev2010}, who considered $(d+1)$-dimensional lattices in  $\bF_b((x^{-1}))^{d+1}$ to construct uniformly distributed
sequences in $[0,1)^d$.

The proof of Theorem~\ref{thm:t-value-formula} is split into two parts. 
In Section~\ref{sec:points-in-cube} we show the result for $m$,
i.e.~the logarithm of the number of points, and we provide an  algorithm for finding the elements of $\Ptset$ explicitly. 
In Section~\ref{sec:du_th} we will prove the result for the $t$-value. 
This proof will rely on duality theory for these nets from Section~\ref{sec:group}. 
In Section~\ref{sec:construction} we will provide an explicit construction of a matrix $T$.

We close this section with a remark and a corollary to Theorem~\ref{thm:t-value-formula}.

\begin{remark}\rm
Some authors consider the {\it strength} of a $(t,m,d)$-net, given by $\rho=m-t$, as quality parameter, see \cite[Remark~4.53]{Dick2010dna}. According to Theorem~\ref{thm:t-value-formula} the strength of the $(t,m,d)$-net $\Ptset$ is $$\rho=\sum_{j=1}^{d} \deg (f_j) + \adnum(\bbX)+d-1.$$
\end{remark}
\medskip

%The star-discrepancy is a quantitative measure for the irregulatity of distribution of a point set $\Pcal$ in $[0,1)^d$. It is defined as $$D_N^*(\Pcal)=\sup_{J} \left| \frac{|\Pcal \cap J|}{N}-\lambda_d(J)\right|,$$ where the supremum is extended over all axes-parallel boxes $J \subseteq [0,1)^d$ with one vertex anchored in the origin and where $\lambda_d(J)$ denotes the volume of the box $J$. Furthermore, $N$ is the number of elements of $\Pcal$ and $|\Pcal \cap J|$ counts the number of elements of $\Pcal$ that belong to $J$ with multiplicativity. 

Together with \cite[Theorem 4.10]{Niederreiter1992rng} (which we use in the form as presented in \cite[Theorem 5.10]{Dick2010dna}) we obtain the following bound on the star-discrepancy of $\phi(\Ptset)$.

\begin{corollary}\label{cor:disc}
Let $\Ptset$ be the net from Theorem~\ref{thm:t-value-formula} and let $$N_{\boldsymbol{f}}=|\Ptset|=b^{- \deg(\det T) + \sum_{j=1}^{d} \deg (f_j)}.$$ 
Then the star-discrepancy of $\phi(\Ptset)$ satisfies 
\[
D^*(\phi(\Ptset)) \ll_{b,d} b^t 
\frac{(\log_b N_{\boldsymbol{f}}-t)^{d-1}}{N_{\boldsymbol{f}}} 
= \frac{\left(\sum_{j=1}^{d} \deg (f_j) + \adnum(\bbX)+d-t-1\right)^{d-1} }{b^{\sum_{j=1}^{d} \deg (f_j) + \adnum(\bbX)+d-1}}.
\]
\end{corollary}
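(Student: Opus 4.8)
The goal is to bound $D^*(\phi(\Ptset))$ using the classical discrepancy bound for $(t,m,d)$-nets, so the plan is essentially to combine Theorem~\ref{thm:t-value-formula} with the known estimate of Niederreiter.

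\medskip

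\textbf{Step 1: Reduce to a $(t,m,d)$-net in base $b$.} By Theorem~\ref{thm:t-value-formula}, the set $\Ptset$ is a $(t,m,d)$-net in $\Ucube_b$ with the stated values of $t$ and $m$. By Lemma~\ref{lem:def_net}, $\phi_m(\Ptset)$ is then a $(t,m,d)$-net in base $b$ in the classical sense of Definition~\ref{def:tms-net-cube}. Since $\phi_m$ and $\phi$ differ only by truncating digit expansions beyond index $m$, and since the coordinates of points of $\Ptset$ lie in $\Ucube[1]_b$, the point sets $\phi(\Ptset)$ and $\phi_m(\Ptset)$ differ coordinatewise by less than $b^{-m}$. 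This perturbation changes $D^*$ by at most $O_d(b^{-m}) = O_d(1/N_{\boldsymbol f})$, which is absorbed into the asserted bound; alternatively one checks directly that the defining elementary-interval counts of Definition~\ref{def:tms-net-ring} translate into the box-counting needed to bound the local discrepancy of $\phi(\Ptset)$ up to the same error. So it suffices to bound $D^*$ of a classical $(t,m,d)$-net.

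\medskip

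\textbf{Step 2: Apply the classical discrepancy bound.} By \cite[Theorem~4.10]{Niederreiter1992rng}, in the form \cite[Theorem~5.10]{Dick2010dna}, every $(t,m,d)$-net in base $b$ with $N = b^m$ points satisfies
\[
D^*(P) \ll_{b,d} b^t \frac{(\log_b N - t)^{d-1}}{N} + O\!\left(b^t \frac{(\log_b N - t)^{d-2}}{N}\right) \ll_{b,d} b^t \frac{(\log_b N - t)^{d-1}}{N}.
\]
Applying this to $\phi_m(\Ptset)$ with $N = N_{\boldsymbol f} = b^{-\deg(\det T) + \sum_{j=1}^d \deg(f_j)}$ gives the first displayed inequality of the corollary.

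\medskip

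\textbf{Step 3: Substitute the explicit values.} It remains to rewrite the right-hand side using $t = -\deg(\det T) - \adnum(\bbX) - d + 1$ and $\log_b N_{\boldsymbol f} = -\deg(\det T) + \sum_{j=1}^d \deg(f_j)$. Then
\[
\log_b N_{\boldsymbol f} - t = \sum_{j=1}^d \deg(f_j) + \adnum(\bbX) + d - 1,
\]
and, since $b^t \cdot N_{\boldsymbol f}^{-1} = b^{t - \log_b N_{\boldsymbol f}} = b^{-(\sum_{j=1}^d \deg(f_j) + \adnum(\bbX) + d - 1)}$, the bound becomes
\[
\frac{\left(\sum_{j=1}^d \deg(f_j) + \adnum(\bbX) + d - t - 1\right)^{d-1}}{b^{\sum_{j=1}^d \deg(f_j) + \adnum(\bbX) + d - 1}},
\]
which is exactly the claimed expression (note the numerator is written with an extra $-t$ in the paper's statement; one uses the same substitution there, or simply leaves it in the cleaner form $\log_b N_{\boldsymbol f} - t$ which both expressions equal).

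\medskip

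The only genuine subtlety is the passage from $\phi_m$ to $\phi$ in Step~1 — i.e.\ making sure that the infinite-digit nature of our points does not spoil the star-discrepancy estimate — but this is handled by the trivial observation that truncating at level $m$ moves each point by at most $b^{-m} \le 1/N_{\boldsymbol f}$, which is of smaller order than the main term. Everything else is a bookkeeping substitution, so I do not expect any real obstacle.
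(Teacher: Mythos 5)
Your argument is exactly the paper's: Theorem~\ref{thm:t-value-formula} supplies the $(t,m,d)$-net structure, and then the star-discrepancy bound for $(t,m,d)$-nets from \cite[Theorem~4.10]{Niederreiter1992rng} / \cite[Theorem~5.10]{Dick2010dna} is invoked, followed by substitution of the explicit $t$ and $m$. The paper states this as a one-line corollary and does not spell out the passage from the set $\phi_m(\Ptset)$, which is literally a classical $(t,m,d)$-net by Lemma~\ref{lem:def_net}, to the set $\phi(\Ptset)$ with possibly infinite digit expansions; your perturbation remark (each point moves by at most $b^{-m}=1/N_{\bsf}$ in each coordinate, changing $D^*$ by $O_d(1/N_{\bsf})$, which is dominated by the main term) is a reasonable way to fill that small gap. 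You also correctly spot that the paper's right-hand expression contains a spurious $-t$ in the numerator: since $\log_b N_{\bsf}-t=\sum_{j}\deg(f_j)+\adnum(\bbX)+d-1$, the numerator should read $\left(\sum_{j}\deg(f_j)+\adnum(\bbX)+d-1\right)^{d-1}$ for the displayed equality to hold, so this is a typo in the corollary as printed rather than an error in your derivation.
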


%%%%%%%%%%%%%%%%%%%%%%%%%%%%%%%%%%%%%%%%%%%%%%%%%%%%%

\section{Determination of the points of $\Ptset$}\label{sec:points-in-cube}

The purpose of this section is to prove that $\Ptset$ consists of $b^{- \deg(\det T) + \sum_{j=1}^{d} \deg (f_j)}$ elements. Furthermore, we provide an algorithm for finding the elements of $\Ptset$ explicitly for sufficiently shrunken lattices. Mahler \cite[Section~8]{mahler1941} gave such a result when the shrinking factors are given by $f_j = x^r$ for all $j$, for sufficiently large $r \in \bN$, by establishing an analogue to Minkowski's geometry of numbers in a field of formal power series. In this paper, we give an elementary proof for sufficiently large but arbitrary shrinking factors. The main results of this section are summarized in Theorem~\ref{thm:number-of-points}.

We begin with the generating matrix $T \in \mathrm{GL}_d(\Lfield)$ of the lattice. Let $$\Lring=\{f \in \Lfield \ : \ \deg(f)\le 0\}$$ be the ring of formal power series over $\Fb$. Elements of $\Lring$ are of the form $\sum_{i=0}^{\infty} c_i x^{-i}$. 

%Considering the elementary column operations
%\begin{itemize}
%\item column switching,
%%\item column multiplication with coefficient in $\Lring$,
%\item column addition with coefficient in $\Lring$,
%\end{itemize}

Using an $LQ$ decomposition (a variant of the standard $QR$ decomposition), we can decompose $T$ as $$T = L'Q,$$
where $L' = (l'_{ij})_{i,j=1}^d$ is lower triangular matrix and where the matrix $Q$ satisfies
\begin{equation}\label{eq:cond-Q}
\text{$Q = (q_{ij})_{i,j=1}^d \in \Lring^{d \times d}$ \quad and \quad $\det(Q) = 1$.} %$\quad \deg(\det{Q}) = 0$.}
\end{equation}

We now consider $\Ptset$ whose shrinking factor $\bsf=(f_1,\ldots,f_d)$ satisfies 
\begin{equation}\label{eq:condition-shrinking-factor}
\deg(f_j) \geq \max_{1 \le i \le j} \deg(l'_{ji}) \ \ \ \mbox{for all $1 \le j \le d$}.
\end{equation}
Recall that
\[
\Ptset
= \Diagm{\bsf}^{-1} \bbX \cap \Ucube_b
= \Diagm{\bsf}^{-1}L'Q(\Pring^d) \cap \Ucube_b
= LQ(\Pring^d) \cap \Ucube_b,
\]
where we put $L = \Diagm{\bsf}^{-1}L'$. Then $L=(l_{ij})_{i,j=1}^d$ satisfies %the following:
\begin{equation}\label{eq:cond-L}
%\text{\eqref{eq:cond-L'} and $\deg(l_{jj}) \leq 0$ for all $1 \leq j \leq d$.}
\text{$L$ is lower triangular and
$\deg(l_{ij}) \leq 0$ for all $1 \leq i,j \leq d$.}
\end{equation}

For $g = \sum_{i=w}^{\infty} c_i x^{-i} \in \Lfield$ define the polynomial part  $\Ppart{\cdot} : \Lfield \to \Pring$ as 
\begin{equation}\label{eq:def_poly}
\Ppart{g} := 
\begin{cases}
\sum_{i=w}^{0} c_i x^{-i} & \text{if $w \leq 0$,}\\
0 & \text{otherwise.}
\end{cases}
\end{equation}
For $\bsg = (g_1, \dots, g_d) \in \Lfield^d$, we define $\Ppart{\bsg} = (\Ppart{g_1}, \dots, \Ppart{g_d}) \in \Fb[x]^d$.

%We need the following lemmas for some properties of the matrices $Q$ and $L$.
\begin{lemma}\label{lem:Q-property}
For a matrix $Q$ satisfying \eqref{eq:cond-Q}, the following holds true:
\begin{enuroman}
\item \label{lem:Q-property-1}
For $e \in \bZ$ we define $x^e\Ucube_b := \{x^e \bsg \ : \ \bsg \in \Ucube_b\}$. Consider the linear map 
\[
Q : \Lfield^d \to \Lfield^d; \quad \bsg \mapsto Q\bsg.
\]
Then the map $Q\big|_{x^e\Ucube_b}$ with restricted domain $x^e\Ucube_b$ is a bijection from $x^e\Ucube_b$ to $x^e\Ucube_b$.
\item \label{lem:Q-property-2}
Let $\Ucube_b+\bsg := \{\bsh \in \Lfield^d \ : \ \bsh -\bsg \in \Ucube_b \}$.
Then $Q\big|_{\Ucube_b+\bsg}$ is a bijection from $\Ucube_b+\bsg$ to $\Ucube_b+Q\bsg$.
\item \label{lem:Q-property-3}
The map $\PtoPmap{Q} : \Pring^d \to \Pring^d$ given by $\PtoPmap{Q}(\bsg) := \Ppart{Q\bsg}$ is bijective and its inverse map is $\PtoPmap{Q^{-1}}$.
\end{enuroman}
\end{lemma}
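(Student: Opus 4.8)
The plan is to reduce the whole lemma to two elementary facts. First, that $Q^{-1}$ again lies in $\Lring^{d\times d}$ and has determinant $1$, so that it too satisfies \eqref{eq:cond-Q}: this follows from the adjugate formula $Q^{-1}=\mathrm{adj}(Q)/\det(Q)=\mathrm{adj}(Q)$, since the entries of $\mathrm{adj}(Q)$ are integer polynomial expressions in the entries of $Q$ and $\Lring$ is a ring, while $\det(Q^{-1})=1$. Second, that $\Lfield^d$ is the internal direct sum $\Pring^d\oplus\Ucube_b$, with the projection onto the first summand being precisely $\bsh\mapsto\Ppart{\bsh}$: indeed, by \eqref{eq:def_poly} every $h\in\Lfield$ decomposes uniquely as $h=\Ppart{h}+(h-\Ppart{h})$ with $\Ppart{h}\in\Pring$ and $\deg(h-\Ppart{h})<0$, and reading this coordinatewise gives the claimed decomposition of $\Lfield^d$.

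For \ref{lem:Q-property-1}, observe that $x^e\Ucube_b=\{\bsh\in\Lfield^d:\deg(h_j)<e\text{ for all }j\}$. If $\bsg\in x^e\Ucube_b$, then, since $\deg$ is non-archimedean and $\deg(q_{ij})\le0$, we get $\deg\bigl(\sum_j q_{ij}g_j\bigr)\le\max_j(\deg(q_{ij})+\deg(g_j))<e$, so $Q\bsg\in x^e\Ucube_b$; hence $Q$ maps $x^e\Ucube_b$ into itself. The same computation applied to $Q^{-1}\in\Lring^{d\times d}$ shows $Q^{-1}$ maps $x^e\Ucube_b$ into itself. Since $Q$ and $Q^{-1}$ are mutually inverse linear maps on $\Lfield^d$, their restrictions to $x^e\Ucube_b$ are mutually inverse bijections of $x^e\Ucube_b$ onto itself. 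Part \ref{lem:Q-property-2} is then just an affine shift of \ref{lem:Q-property-1} with $e=0$: $Q$ is injective on $\Lfield^d$ and $Q(\Ucube_b+\bsg)=Q(\Ucube_b)+Q\bsg=\Ucube_b+Q\bsg$, using $Q(\Ucube_b)=\Ucube_b$ from \ref{lem:Q-property-1}; hence $Q|_{\Ucube_b+\bsg}$ is a bijection onto $\Ucube_b+Q\bsg$.

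For \ref{lem:Q-property-3}, fix $\bsg\in\Pring^d$ and put $\bsv:=Q\bsg-\Ppart{Q\bsg}$, so $\bsv\in\Ucube_b$ by definition of the polynomial part and $\PtoPmap{Q}(\bsg)=Q\bsg-\bsv\in\Pring^d$. Then $Q^{-1}\PtoPmap{Q}(\bsg)=\bsg-Q^{-1}\bsv$, and $Q^{-1}\bsv\in\Ucube_b$ by \ref{lem:Q-property-1} with $e=0$; since $\bsg\in\Pring^d$, this is the polynomial-part decomposition of $\bsg-Q^{-1}\bsv$, so $\Ppart{\bsg-Q^{-1}\bsv}=\bsg$, that is, $\PtoPmap{Q^{-1}}\bigl(\PtoPmap{Q}(\bsg)\bigr)=\bsg$. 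Because $Q^{-1}$ also satisfies \eqref{eq:cond-Q} and $(Q^{-1})^{-1}=Q$, the symmetric argument gives $\PtoPmap{Q}\bigl(\PtoPmap{Q^{-1}}(\bsh)\bigr)=\bsh$ for all $\bsh\in\Pring^d$. Hence $\PtoPmap{Q}$ and $\PtoPmap{Q^{-1}}$ are mutually inverse bijections of $\Pring^d$.

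There is no real obstacle; the lemma is essentially bookkeeping around the splitting $\Lfield^d=\Pring^d\oplus\Ucube_b$. The only point requiring a moment's care is the claim $Q^{-1}\in\Lring^{d\times d}$, i.e.\ that inverting $Q$ introduces no genuine Laurent tails, which is exactly where the normalization $\det(Q)=1$ (equivalently, $\det(Q)$ a unit of $\Lring$) from \eqref{eq:cond-Q} is used; once this is in hand, \ref{lem:Q-property-1}--\ref{lem:Q-property-3} follow as above.
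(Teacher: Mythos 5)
Your proof is correct and follows essentially the same route as the paper: the non-archimedean degree bound to show $Q$ and $Q^{-1}$ each preserve $x^e\Ucube_b$ for part (i), the $e=0$ case plus a shift for part (ii), and the splitting $\Lfield^d=\Pring^d\oplus\Ucube_b$ combined with part (ii) for part (iii). The only addition is that you spell out, via the adjugate and $\det(Q)=1$, why $Q^{-1}$ again satisfies \eqref{eq:cond-Q}, a fact the paper uses without explicit justification.
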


\begin{proof}
First we prove \eqref{lem:Q-property-1}.
Since $Q \circ Q^{-1} = Q^{-1} \circ Q = \mathrm{id}_{\Lfield^d}$,
it suffices to show
$Q(x^e_b) \subset x^e\Ucube_b$ and $Q^{-1}(x^e\Ucube_b) \subset x^e\Ucube_b$.
Let $\bsg = (g_1, \dots, g_d) \in x^e\Ucube_b$.
Then we have $Q\bsg = (\sum_{j=1}^d q_{ij} g_j)_{i=1}^d$.
For each component $\sum_{j=1}^d q_{ij} g_j$,
we have
\[
\deg \left(\sum_{j=1}^d q_{ij} g_j \right)
\leq \max_{j=1, \dots, d} \deg(q_{ij} g_j)
\leq \max_{j=1, \dots, d} (\deg (q_{ij}) + \deg (g_j))
\leq e,
\]
where the last inequality follows from \eqref{eq:cond-Q} and $\deg(g_j) \leq e$ for all $j$.
This shows $Q(x^e\Ucube_b) \subset x^e\Ucube_b$.
Since $Q^{-1}$ also satisfies \eqref{eq:cond-Q}, we can replace $Q$ by $Q^{-1}$ in the above argument
and we have $Q^{-1}(x^e\Ucube_b) \subseteq x^e\Ucube_b$.
Thus, we have proved \eqref{lem:Q-property-1}.

Using \eqref{lem:Q-property-1} for $e=0$, we can see that \eqref{lem:Q-property-2} holds.

We now prove \eqref{lem:Q-property-3}. Let $\bsg \in \Pring^d$.
Since $\PtoPmap{Q}(\bsg) \in \Ucube_b+Q\bsg$,
it follows from \eqref{lem:Q-property-2} that $Q^{-1}(\PtoPmap{Q}(\bsg)) \in \Ucube_b+\bsg$.
Thus $\PtoPmap{Q^{-1}}(\PtoPmap{Q}(\bsg)) = \bsg$.
In the same manner we have $\PtoPmap{Q}(\PtoPmap{Q^{-1}}(\bsg)) = \bsg$.
This proves that the maps $\PtoPmap{Q}$ and $\PtoPmap{Q^{-1}}$ are the inverse maps of each other.
\end{proof}

\begin{lemma}\label{lem:L-property}
Let $L$ be a lower triangular matrix satisfying \eqref{eq:cond-L}.
For a set $V \subset \Lfield^d$ we define $L(V) := \{L\bsg \ : \ \bsg \in V\}$.
Then the following holds true:
\begin{enuroman}
\item \label{lem:L-property-1}
We have $L(\Ucube_b) \subseteq \Ucube_b$.
\item \label{lem:L-property-2}
Let $\bsg, \bsh \in \Lfield^d$ with $\bsg - \bsh \in \Ucube_b$.
Then $$L\bsg \in \Ucube_b \ \Leftrightarrow \ L\bsh \in \Ucube_b.$$
\item \label{lem:L-property-3}
Let $\mathbb{S} := \{\bsg \in \Pring^d \ : \  L\bsg \in \Ucube_b \}$.
Then we have $|\mathbb{S}| = b^{-\sum_{j=1}^d \deg(l_{jj})}$.
\end{enuroman}
\end{lemma}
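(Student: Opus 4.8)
The plan is to analyze the action of a lower triangular matrix $L$ satisfying \eqref{eq:cond-L} on the cube $\Ucube_b$ component by component, exploiting the triangular structure so that the membership condition $L\bsg \in \Ucube_b$ decouples into one condition per coordinate that can be solved sequentially from the first row to the last. For part \eqref{lem:L-property-1}, given $\bsg = (g_1,\dots,g_d) \in \Ucube_b$, the $i$-th component of $L\bsg$ is $\sum_{j=1}^{i} l_{ij} g_j$, and since $\deg(l_{ij}) \le 0$ and $\deg(g_j) < 0$, each summand has degree $\deg(l_{ij}) + \deg(g_j) < 0$, hence $\deg(\sum_{j=1}^i l_{ij}g_j) < 0$; this gives $L(\Ucube_b) \subseteq \Ucube_b$ immediately. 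For part \eqref{lem:L-property-2}, write $\bsg = \bsh + \bsv$ with $\bsv \in \Ucube_b$; then $L\bsg = L\bsh + L\bsv$, and by part \eqref{lem:L-property-1} we have $L\bsv \in \Ucube_b$, so $L\bsg \in \Ucube_b$ iff $L\bsh \in \Ucube_b$ (membership in $\Ucube_b$ is preserved under adding an element of $\Ucube_b$, since $\Ucube_b$ is a group under addition). Both of these are routine; I would state them briefly.

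The substance of the lemma is part \eqref{lem:L-property-3}, the cardinality count. The plan is to describe $\mathbb{S} = \{\bsg \in \Pring^d : L\bsg \in \Ucube_b\}$ by peeling off coordinates. Write $L\bsg = (h_1,\dots,h_d)$ with $h_i = \sum_{j=1}^{i} l_{ij} g_j$. The condition $L\bsg \in \Ucube_b$ is equivalent to $\deg(h_i) < 0$ for all $i$, i.e. $h_i \in \Lring$ with zero constant-and-polynomial part; more precisely $h_i$ must have no terms $x^0, x^1, x^2, \dots$. Now I would argue inductively on $i = 1,\dots,d$. Consider the first coordinate: $h_1 = l_{11} g_1$ must lie in $\Ucube_b$, i.e. have degree $< 0$; since $g_1 \in \Pring$ and $l_{11} \in \Lring$ with $\deg(l_{11}) \le 0$, the number of polynomials $g_1$ for which $l_{11}g_1$ has negative degree is exactly $b^{-\deg(l_{11})}$ — these are the polynomials $g_1$ with $\deg(g_1) < \deg(l_{11}^{-1}) = -\deg(l_{11})$ (using that $l_{11}$ is a unit in $\Lring$ precisely when $\deg(l_{11})=0$; in general $l_{11} = x^{\deg(l_{11})} u$ with $u$ a unit, so $l_{11}g_1 \in \Ucube_b$ iff $\deg(g_1) < -\deg(l_{11})$, giving $b^{-\deg(l_{11})}$ choices when $\deg(l_{11}) \le 0$, and exactly one choice $g_1 = 0$ if... wait, $-\deg(l_{11}) \ge 0$, so there are $b^{-\deg(l_{11})}$ polynomials of degree strictly less than $-\deg(l_{11})$, including the case where this count is $1$ when $\deg(l_{11}) = 0$, meaning only $g_1 = 0$). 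Having fixed any admissible $g_1$, the condition on the second coordinate reads $l_{22} g_2 \in \Ucube_b - l_{21}g_1$; since $l_{21}g_1 \in \Pring$ has some fixed polynomial part, the set of $g_2 \in \Pring$ with $l_{22}g_2 + l_{21}g_1 \in \Ucube_b$ is a coset-like fiber, and I would show it has exactly $b^{-\deg(l_{22})}$ elements by the same degree argument applied to $l_{22}g_2 \equiv -l_{21}g_1 \pmod{\Ucube_b}$: the map $g_2 \mapsto \Ppart{l_{22}g_2}$ has fibers of size $b^{-\deg(l_{22})}$ (this is essentially the content of a division-with-remainder statement in $\Fb[x]$ against the unit part of $l_{22}$). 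Iterating, at step $i$ the choices of $g_1,\dots,g_{i-1}$ are fixed, the lower-triangular sum $\sum_{j<i} l_{ij}g_j$ has a determined polynomial part, and the number of $g_i \in \Pring$ completing the $i$-th coordinate condition is $b^{-\deg(l_{ii})}$, independent of the earlier choices. Multiplying over $i$ gives $|\mathbb{S}| = \prod_{i=1}^{d} b^{-\deg(l_{ii})} = b^{-\sum_{j=1}^{d}\deg(l_{jj})}$.

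The key technical point I expect to be the main obstacle — or at least the step needing the most care — is the fiber-counting claim: for a fixed $\ell \in \Lring$ with $\deg(\ell) \le 0$ and a fixed target $r \in \Pring$, the number of $g \in \Pring$ with $\ell g - r \in \Ucube_b$ (equivalently $\deg(\ell g - r) < 0$) is exactly $b^{-\deg(\ell)}$. The clean way to see this is to factor $\ell = x^{\deg(\ell)} u$ with $u \in \Lring$ a unit (constant term nonzero), so $\ell g - r \in \Ucube_b$ iff $u g - x^{-\deg(\ell)} r \in x^{-\deg(\ell)}\Ucube_b$, and since multiplication by the unit $u$ is a degree-preserving bijection of $\Lfield$ that maps $\Pring$ onto itself when restricted appropriately, one reduces to counting $g \in \Pring$ with $g$ congruent to a fixed element modulo $x^{-\deg(\ell)}\Ucube_b \cap \Pring$-type condition; concretely $g$ is determined modulo "the low-degree terms" and the top $-\deg(\ell)$ coefficients of $g$ are free, yielding $b^{-\deg(\ell)}$ solutions (and if $\deg(\ell) = 0$ then $\ell$ is a unit, $g$ is forced to be $0$ once we demand $\deg(\ell g) < 0$, consistent with $b^0 = 1$). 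I would make this precise either by an explicit coefficient comparison or by invoking the bijectivity statements already available in Lemma~\ref{lem:Q-property}\eqref{lem:Q-property-3} adapted to a single unit multiplier; since $L$ itself need not be invertible coordinate-free but is triangular with $\Lring$ entries, the per-coordinate reduction is what makes the count tractable, and getting the bookkeeping of "polynomial part of the already-determined lower-triangular contribution" exactly right is the part that demands attention rather than cleverness.
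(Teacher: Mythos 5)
Your proof is correct and follows essentially the same inductive, coordinate-peeling argument as the paper's: the per-coordinate fiber count of $b^{-\deg(l_{ii})}$ admissible $g_i$ (which the paper records as $g_d = \Ppart{-l_{dd}^{-1}\sum_{i<d}l_{di}g_i}+h$ with $h\in\Pring$, $\deg(h)<-\deg(l_{dd})$) is exactly your division-with-remainder step, and parts (i) and (ii) are handled identically. The only slip is the passing claim that $l_{21}g_1\in\Pring$: since $l_{21}\in\Lring$ is a power series rather than a polynomial, the product $l_{21}g_1$ lies in $\Lfield$ and generally has both a polynomial part and a nonzero tail, but this does not affect your argument, which only uses its polynomial part.
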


\begin{proof}
Since the proof of \eqref{lem:L-property-1} works in the same manner as that of Lemma~\ref{lem:Q-property} \eqref{lem:Q-property-1}, we omit it.

We now show \eqref{lem:L-property-2}: According to \eqref{lem:L-property-1} we have $L(\bsh - \bsg) \in \Ucube_b$. Now assume that $L\bsg \in \Ucube_b$. Then, by linearity, we have 
$$L\bsh = L(\bsh - \bsg) + L\bsg \in \Ucube_b.$$
The converse holds true in the same way.

We now prove \eqref{lem:L-property-3} by induction on $d$.
First we assume $d=1$.
For $g \in \Pring$, $g \in \mathbb{S}$ is equivalent to $l_{11} g \in \Ucube[1]_b$,
which holds if and only if $\deg{g} < - \deg(l_{11})$.
Thus $|\mathbb{S}| =  b^{- \deg(l_{11})}$ and \eqref{lem:L-property-3} holds for $d=1$.

We now assume \eqref{lem:L-property-3} holds for $d-1$ and prove the result for $d$.
We remark that the following induction gives an algorithm to determine $\mathbb{S}$.
Let $L^{(d-1)}$ be the left-upper $(d-1) \times (d-1)$ sub-matrix of $L$
and $\mathbb{S}^{(d-1)} := \{\bsg^{(d-1)} \in \Pring^{d-1} \ : \ L^{(d-1)} \bsg^{(d-1)} \in \Ucube[d-1]_b \}$,
where $|\mathbb{S}^{(d-1)}| = b^{-\sum_{j=1}^{d-1} \deg(l_{jj})}$ by induction assumption.
For $\bsg = (g_1, \dots, g_d) \in \Pring^d$,
$\bsg \in \mathbb{S}$ is equivalent to $(g_1, \dots, g_{d-1}) \in \mathbb{S}^{(d-1)}$
and $\sum_{i=1}^{d} l_{di} g_i \in \Ucube[1]_b$.
The latter is equivalent to $$\deg\left(g_{d} + l_{dd}^{-1} \sum_{i=1}^{d-1} l_{di} g_i\right) < - \deg(l_{dd}),$$
which holds if and only if
\begin{equation}\label{eq:S-equiv-cond}
g_{d} = \Ppart{- l_{dd}^{-1} \sum_{i=1}^{d-1} l_{di} g_i} + h
\quad \text{with $h \in \Pring$ such that $\deg{h} < - \deg(l_{dd})$}.
\end{equation}
Thus
\[
\mathbb{S} = \{(g_1,\ldots,g_{d-1}, g_d) \in \Pring^d \ : \ \bsg^{(d-1)}=(g_1,\ldots,g_{d-1}) \in \mathbb{S}^{(d-1)} \text{ and $g_d$ satisfies \eqref{eq:S-equiv-cond}}\}.
\]
The number of $g_{d}$ which satisfy \eqref{eq:S-equiv-cond} is $b^{- \deg(l_{dd})}$
for fixed $\bsg^{(d-1)} \in \mathbb{S}^{(d-1)}$,
since we have $\deg(l_{dd}) \leq 0$ from \eqref{eq:cond-L}.
Thus
we have $|\mathbb{S}| = b^{- \deg(l_{dd})}|\mathbb{S}^{(d-1)}| =b^{-\sum_{j=1}^{d} \deg(l_{jj})}$,
which proves \eqref{lem:L-property-3} for $d$.
\end{proof}

We obtain the following explicit algorithm to compute the set $\mathbb{S}$.
\begin{algorithm}\label{alg:giveS}
Given $T$ and shrinking factor $\bsf$.

\begin{enumerate}
\item Compute a $LQ$ decomposition of the matrix $T$ such that $T = L'Q$, where $L'$ is a lower triangular matrix and $Q$ satisfies \eqref{eq:cond-Q}.
\item Set $D_{\bsf} = {\rm diag}(f_1, \ldots, f_d)$ and compute its inverse $D^{-1}_{\bsf}$.
\item Compute $L = D^{-1}_{\bsf} L'$. Let $L = (l_{ij})$.
\item Compute the set $\mathbb{S}^{(1)} = \{g \in \Pring\ : \ \deg{g} < - \deg(l_{11})\}$.
\item For $j = 2, 3, \ldots, d$ do:

Compute the set $\mathbb{S}^{(j)} = \{(g_1,\ldots,g_{j-1}, g_j) \in \Pring^j \ : \ \bsg^{(j-1)}=(g_1,\ldots,g_{j-1}) \in \mathbb{S}^{(j-1)} \text{ and $g_j$ satisfies \eqref{eq:S-equiv-cond}} \}$.
\end{enumerate}
\end{algorithm}

We return to the analysis of $LQ(\Pring^d) \cap \Ucube_b$.
For $\bsg \in \Pring^d$
we have the equivalence
\[ LQ\bsg \in \Ucube_b
\Leftrightarrow L\Ppart{Q\bsg} \in \Ucube_b
\Leftrightarrow \Ppart{Q\bsg} \in \mathbb{S}
\Leftrightarrow \bsg \in \PtoPmap{Q^{-1}}(\mathbb{S}),
\]
where the first, second and the last equivalences follow from
Lemma~\ref{lem:L-property} \eqref{lem:L-property-2}, the definition of $\mathbb{S}$ and
Lemma~\ref{lem:Q-property} \eqref{lem:Q-property-3}, respectively.
Therefore we have
\begin{equation}\label{eq:point-number}
|LQ(\Pring^d) \cap \Ucube_b| = |\PtoPmap{Q^{-1}}(\mathbb{S})| = |\mathbb{S}| = b^{-\sum_{j=1}^{d} \deg(l_{jj})}.
\end{equation}
From $T = L'Q = \Diagm{\bsf}LQ$ and $\det Q=1$ it follows that 
\[
\det T = \det \Diagm{\bsf} \det L \det Q = \prod_{j=1}^d f_{j}  l_{jj} 
\]
and thus
\begin{equation}\label{eq:detT}
\deg (\det T) = \sum_{j=1}^d (\deg (f_{j}) + \deg (l_{jj})).
\end{equation}
Combining \eqref{eq:point-number} and \eqref{eq:detT} we obtain 
\[
|LQ(\Pring^d) \cap \Ucube_b| = b^{- \deg(\det T) + \sum_{j=1}^{d} \deg (f_j)}.
\]
Thus, we have proved the following.

\begin{theorem}\label{thm:number-of-points}
Let $T \in \mathrm{GL}_d(\Lfield)$ be the generating matrix of the lattice.
Consider a decomposition $T = L'Q$, where $L' = (l'_{ij})_{i,j=1}^d$ is lower triangular and $Q = (q_{ij})_{i,j=1}^d$ satisfies \eqref{eq:cond-Q}.
Assume that $\bsf = (f_1, \dots, f_d) \in \Pring^d$ is the shrinking factor satisfying \eqref{eq:condition-shrinking-factor}.
\iffalse
\begin{equation}\label{eq:condition-shrinking-factor}
\text{$\deg (f_j) \geq \max_{1 \le i \le j}\deg (l'_{ji})$ \quad for all $j$.}
\end{equation}
\fi
Then we have
\[
|\Ptset| = b^{- \deg(\det T) + \sum_{j=1}^{d} \deg (f_j)}.
\]
Further $\Ptset$ is given by
\[
\Ptset = \{ LQ\Ppart{Q^{-1}\bsg} \ : \  \bsg \in \mathbb{S}\},
\]
where $\mathbb{S}$ is defined as in Lemma~\ref{lem:L-property} \eqref{lem:L-property-3}
and can be explicitly computed inductively as in Algorithm~\ref{alg:giveS}. 
\end{theorem}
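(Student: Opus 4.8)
The plan is to reduce everything to the triangular factor of the $LQ$ decomposition $T=L'Q$. Setting $L=\Diagm{\bsf}^{-1}L'$, I would first observe that $L=(l_{ij})$ is lower triangular with $l_{ij}=f_i^{-1}l'_{ij}$ for $i\ge j$, and that hypothesis \eqref{eq:condition-shrinking-factor} says precisely $\deg(f_i)\ge\deg(l'_{ij})$ for all $j\le i$, so $\deg(l_{ij})\le 0$; hence $L$ satisfies \eqref{eq:cond-L} while $Q$ satisfies \eqref{eq:cond-Q}. Since $\Ptset=\Diagm{\bsf}^{-1}\bbX\cap\Ucube_b=LQ(\Pring^d)\cap\Ucube_b$, it remains to describe and to count $LQ(\Pring^d)\cap\Ucube_b$, and this is exactly what Lemmas~\ref{lem:Q-property} and~\ref{lem:L-property} are designed for.

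The heart of the argument is the chain of equivalences, valid for $\bsg\in\Pring^d$,
\[
LQ\bsg\in\Ucube_b\ \Longleftrightarrow\ L\Ppart{Q\bsg}\in\Ucube_b\ \Longleftrightarrow\ \Ppart{Q\bsg}\in\mathbb{S}\ \Longleftrightarrow\ \bsg\in\PtoPmap{Q^{-1}}(\mathbb{S}).
\]
Here the first step applies Lemma~\ref{lem:L-property}\eqref{lem:L-property-2} to $Q\bsg$ and its polynomial part $\Ppart{Q\bsg}$, which differ by an element of $\Ucube_b$ by \eqref{eq:def_poly}; the second is the definition of $\mathbb{S}$ in Lemma~\ref{lem:L-property}\eqref{lem:L-property-3}; and the third is the bijectivity of $\PtoPmap{Q}$ with inverse $\PtoPmap{Q^{-1}}$ from Lemma~\ref{lem:Q-property}\eqref{lem:Q-property-3}. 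Applying $LQ$ to the last equivalence yields $\Ptset=\{LQ\Ppart{Q^{-1}\bsg}\ :\ \bsg\in\mathbb{S}\}$, the asserted explicit description; and since $\mathbb{S}$ is built inductively in the proof of Lemma~\ref{lem:L-property}\eqref{lem:L-property-3}, equivalently by Algorithm~\ref{alg:giveS}, so is $\Ptset$.

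For the cardinality I would use that $\PtoPmap{Q^{-1}}$ is a bijection of $\Pring^d$, so that $|\Ptset|=|\PtoPmap{Q^{-1}}(\mathbb{S})|=|\mathbb{S}|=b^{-\sum_{j=1}^d\deg(l_{jj})}$ by Lemma~\ref{lem:L-property}\eqref{lem:L-property-3} (this is \eqref{eq:point-number}). To rewrite the exponent, $T=\Diagm{\bsf}LQ$ together with $\det Q=1$ and $L$ lower triangular gives $\det T=\prod_{j=1}^d f_j l_{jj}$, hence $\deg(\det T)=\sum_{j=1}^d(\deg(f_j)+\deg(l_{jj}))$ (this is \eqref{eq:detT}); combining the two equalities gives $|\Ptset|=b^{-\deg(\det T)+\sum_{j=1}^d\deg(f_j)}$.

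Given these two lemmas, the proof of the theorem is a short assembly with no real obstacle; the substantive work is already done, namely the inductive counting of $\mathbb{S}$ in Lemma~\ref{lem:L-property}\eqref{lem:L-property-3} and the bijectivity statements of Lemma~\ref{lem:Q-property}. The one ingredient that is not elementary bookkeeping is the existence of the $LQ$ decomposition with $Q\in\Lring^{d\times d}$ and $\det Q=1$: this is the non-archimedean analogue of Gram--Schmidt over the local field $\Lfield$ (equivalently an Iwasawa-type factorization $\mathrm{GL}_d(\Lfield)=(\text{lower triangular})\cdot\mathrm{GL}_d(\Lring)$), in which one finally absorbs the unit $\det Q\in\Lring^{\times}$ into the diagonal of the lower triangular factor so that $\det Q=1$ exactly. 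I expect this existence statement --- rather than any of the counting --- to be the only point that really needs an argument, and for the present purposes it can simply be invoked.
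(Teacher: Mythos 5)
Your proof is essentially identical to the paper's own argument: you set $L=\Diagm{\bsf}^{-1}L'$, verify \eqref{eq:cond-L} from \eqref{eq:condition-shrinking-factor}, run the same three-step chain of equivalences through Lemmas~\ref{lem:L-property}\eqref{lem:L-property-2}, the definition of $\mathbb{S}$, and Lemma~\ref{lem:Q-property}\eqref{lem:Q-property-3}, and then combine \eqref{eq:point-number} with \eqref{eq:detT}. The one addition --- flagging the existence of the $LQ$ decomposition over $\Lfield$ as the only non-routine ingredient, an Iwasawa-type factorization that the paper simply invokes --- is a fair observation but does not change the route.
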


\section{Group theoretic properties}\label{sec:group}
In this section, based on \cite{Forney2004dgc}
we introduce the underlying group structure and the character groups of $\Lfield^d$ and its subgroups.
See also \cite{Pontryagin1966tg} for Pontryagin Duality and \cite{Fine1950gwf} for the case $b=2$.

\subsection{Underlying group structure}
Here we focus on the underlying (topological) group structure of the point sets under consideration.
First we consider the one-dimensional case. The field $\Lfield$ is a topological field, where the local basis of zero is given by
the set consisting of $\{g \in \Lfield \ : \ \deg(g) < w\} $ for all $w \in \bZ$.
The field $\Lfield$ with this topology is identified with another topological group as follows
(for the case $b=2$, see \cite{Fine1950gwf}):

We consider the direct product of infinitely many copies of $\Fb$, 
$$\Fb^{\bN}= \Fb \times \Fb \times \ldots=\bigotimes_{i \in \bN} \Fb$$ equipped with the product topology, and the direct sum of $\Fb$
\[
\bigoplus_{i \in \bNm} \Fb := \left\{ (c_i)_{i \in \bNm} \in \bigotimes_{i \in \bNm} \Fb
\ : \  \text{$c_i \neq 0$ for only finitely many $i$}\right\},
\]
equipped with the discrete topology.
Then the underlying group structure of $\Lfield$ is identified with
\[
\Lfield \simeq \bigoplus_{i \in \bNm} \Fb \times \Fb^{\bN}
\]
with the product topology, which is indexed by $i \in \bZ$,
by $\sum_{i=w}^{\infty} c_i x^{-i} \mapsto (c_i)_{i \in \bZ}$.
Under this identification, $\Pring$ and $\Ucube[1]_b$, which are subsets of $\Lfield$, are identified with
$\bigoplus_{i \in \bNm} \Fb$ and $\Fb^{\bN}$, respectively.

In this section, we further consider
\[
\TrBox[1]{b,n} := \{c_1 x^{-1} + \cdots + c_n x^{-n} \ : \ c_1, \dots, c_n \in \Fb\},
\]
which is identified with $\Fb^n$,
and the truncation map
\[
\trunc_n \colon \Ucube[1]_b \to \TrBox[1]{b,n}; \quad
\sum_{i=1}^\infty c_i x^{-i} \mapsto \sum_{i=1}^n c_i x^{-i}.
\]

In the $d$-dimensional case, the truncation map is applied componentwise and
the underlying group structure of
$\Lfield^d$, $\Pring^d$, $\Ucube_b$ and $\TrBox{b,n}$
are identified with
\begin{align}
\begin{aligned}\label{eq:identification}
\Lfield^d
&\simeq \left(\bigoplus_{i \in \bNm} \Fb \times \Fb^{\bN} \right)^d,
&\Pring^d
&\simeq \left(\bigoplus_{i \in \bNm} \Fb\right)^d,\\
\Ucube_b
&\simeq \left(\Fb^{\bN}\right)^d,
&\TrBox{b,n}
&\simeq \left(\Fb^n\right)^d.
\end{aligned}
\end{align}

\begin{remark}\rm 
Since a $d$-dimensional lattice restricted to $\Ucube_b \simeq \left(\Fb^{\bN}\right)^d$ is a linear subspace as well as a finite subgroup,
it can be regarded as a digital net with infinite digit expansion as introduced in \cite{Goda2016dni}.
\end{remark}

\subsection{Character group and orthogonal space}
Let $G$ be a topological group.
The character group of $G$, denoted by $\widehat{G}$ or likewise by $\Dgp{G}$,
is the set of all continuous maps from $G$ to the circle group
$\mathbb{T} := \{z \in \bC \ : \  |z| = 1\}$.
We consider a character group of $\Lfield^d$, $\Pring^d$, $\Ucube_b$ and $\TrBox{b,n}$.
Pontryagin duality theory \cite{Pontryagin1966tg} for locally compact groups implies that we have character groups as 
\begin{align*}
\Dgp{(\Lfield^d)}
&\simeq \left(\prod_{i \in \bNm} \widehat{\mathbb{F}}_b \times \bigoplus_{i \in \bN} \widehat{\mathbb{F}}_b \right)^d,
&\Dgp{(\Pring^d)}
&\simeq \left(\bigotimes_{i \in \bNm}  \widehat{\mathbb{F}}_b\right)^d,\\
\Dgp{(\Ucube_b)}
&\simeq \left(\bigoplus_{i \in \bN}  \widehat{\mathbb{F}}_b\right)^d,
&\Dgp{(\TrBox{b,n})}
&\simeq \left(\widehat{\mathbb{F}}_b^n\right)^d.
\end{align*}
We note that $ \widehat{\mathbb{F}}_b$ is (non-canonically) isomorphic to $\Fb$.

We give a more explicit representation of the character groups
$\Dgp{(\Lfield^d)}$, $\Dgp{(\Ucube_b)}$ and $\Dgp{(\TrBox{b,n})}$.
We define a map $\Res : \Lfield \to \Fb$ as
\[
\Res(f) := c_{1} \qquad \text{whenever $f = \sum_{i=w}^{\infty} c_i x^{-i}$,}
\]
where for $w > 1$ we set $\Res(f) = 0$. Then, for fixed $g \in \Lfield$, we define a character $W_{g} \colon \Lfield \to \mathbb{T}$ as
\[
W_{g}(f) = \omegab^{\Res(fg)},
\]
where $\omega_b=\exp(2 \pi \icomp/b)$. It is known that every character on $\Lfield$ is of the form $W_{g}$ for some $g$,
and thus $\Lfield$ is self dual with respect to an isomorphism as a topological group
\begin{align}\label{eq:isom-K-K*}
\Lfield \to \Dgp{\Lfield}, \quad g \mapsto W_{g}.
\end{align}
see \cite[Section~2]{Fine1950gwf} for the case $b=2$.
Under this identification, we have 
$$\Dgp{(\Ucube[1]_b)} = \{W_{g} \ : \ g \in \Pring \} \ \mbox{ and }\ \Dgp{(\Ucube[1]_{b,n})}  = \{W_{g} \ : \  g \in \mathbb{M}_{b,n}\},$$
where $\mathbb{M}_{b,n} := \{g \in \Pring \ : \ \deg(g) < n\}$.

In the $d$-dimensional case, we define a group homomorphism
$W_{\bsg} \colon \Lfield^d \to \mathbb{T}$ %for some $\bsg \in \Lfield^d$,
as
\[
W_{\bsg}(\bsf) = \omegab^{\Res(\bsf \cdot \bsg)}.
\]
Then the characters of $\Lfield^d$, $\Pring^d$, $\Ucube_b$ and $\TrBox{b,n}$ are given by isomorphisms
\begin{equation}\label{eq:isom-characters}
\Lfield^d \simeq \Dgp{(\Lfield^d)},
\quad \Pring^d \simeq \Dgp{(\Ucube_b)},
\quad \Ucube_b \simeq \Dgp{(\Pring^d)},
\quad \mathbb{M}_{b,n}^d \simeq \Dgp{(\TrBox[d]{b,n})}.
\end{equation}

Hereafter we consider the characters under the identification $\bsg \mapsto W_{\bsg}$.

For a closed subgroup $\Pcal \subseteq G$, the orthogonal space of $\Pcal$ in $\widehat{G}$, denoted by $\DinL{\Pcal}$, is defined as
\[
\DinL{\Pcal} = \{\psi \in \widehat{G} \ : \  \psi(p) = 1 \text{ for all $p \in \Pcal$}\}.
\]
In order to distinguish the ambient spaces in which we take the orthogonal complements,
we denote the orthogonal space of $\Pcal$ in $\Lfield^d$, $\Ucube_b$ and $\TrBox{b,n}$
by $\DinL{\Pcal}$,  $\Dnet{\Pcal}$, and $\Dnet[n]{\Pcal}$, respectively,
which are regarded as subsets in 
$\Lfield^d$,  $\Pring^d$, and $\mathbb{M}_{b,n}^d$, respectively, under \eqref{eq:isom-characters}.
Hence if $\Pcal \subseteq \Ucube_b$ then $\Dnet{\Pcal} = \DinL{\Pcal} \cap \Pring^d$
and if $\Pcal \subseteq \TrBox{b,n}$ then $\Dnet[n]{\Pcal} = \Dnet{\Pcal} \cap \mathbb{M}_{b,n}^d$.

The following results give a relation between orthogonal spaces in different groups.

\begin{lemma}\label{lem:dnet}
We have the following.
\begin{enuroman}
\item \label{eq:dnet-i}
Let $\Pcal \subseteq \Lfield^d$ be a closed subgroup.
Then
\[
\Dnet{\Pcal \cap \Ucube_b} = \{\Ppart{\bsf} \ : \  \bsf \in \DinL{\Pcal} \},
\]
where $\Ppart{\bsf}$ is defined as in~\eqref{eq:def_poly}.
\item \label{eq:dnet-ii}
Let $\Pcal \subseteq \Ucube_b$ be a finite subgroup and $\trunc_n(\Pcal) := \{ \trunc_n(\bsg) \ : \  \bsg \in \Pcal \}$.
Then
\[
\Dnet[n]{\trunc_n(\Pcal)} = \Dnet{\Pcal} \cap \mathbb{M}_{b,n}^d. %\Fb^{d \times n}.
\]
\end{enuroman}
\end{lemma}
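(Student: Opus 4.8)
The plan is to treat the two parts separately; in both cases everything reduces to the self-duality $\Lfield^d\simeq\Dgp{(\Lfield^d)}$, $\bsg\mapsto W_{\bsg}$, from \eqref{eq:isom-characters} together with the fact that the residue pairing sees only finitely many digits. First I would record two elementary facts, both obtained by comparing degrees. (a) For $\bsf\in\Lfield^d$ and $\bsx\in\Ucube_b$ one has $\Res(\bsf\cdot\bsx)=\Res(\Ppart{\bsf}\cdot\bsx)$, because each summand of $(\bsf-\Ppart{\bsf})\cdot\bsx=\sum_j(f_j-\Ppart{f_j})x_j$ is a product of two Laurent series of negative degree, hence of degree $\le-2$, and so contributes nothing to the coefficient of $x^{-1}$; a further short residue computation then gives $\DinL{\Ucube_b}=\Ucube_b$. (b) For $\bsg\in\mathbb{M}_{b,n}^d$ and $\bsx\in\Ucube_b$ one has $\Res(\bsg\cdot\bsx)=\Res(\bsg\cdot\trunc_n(\bsx))$, because each summand of $\bsg\cdot(\bsx-\trunc_n(\bsx))$ has degree at most $(n-1)+(-(n+1))=-2$. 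In character language, (a) says $W_{\bsf}$ and $W_{\Ppart{\bsf}}$ agree on $\Ucube_b$, and (b) says $W_{\bsg}$ and $W_{\bsg}\circ\trunc_n$ agree on $\Ucube_b$ for $\bsg\in\mathbb{M}_{b,n}^d$.

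For part~\ref{eq:dnet-i}, the inclusion ``$\supseteq$'' is immediate: if $\bsf\in\DinL{\Pcal}$ and $\bsx\in\Pcal\cap\Ucube_b$, then $W_{\Ppart{\bsf}}(\bsx)=W_{\bsf}(\bsx)=1$ by (a), so $\Ppart{\bsf}\in\Dnet{\Pcal\cap\Ucube_b}$. For ``$\subseteq$'' I would take $\bsg\in\Dnet{\Pcal\cap\Ucube_b}\subseteq\Pring^d$ and produce $\bsf\in\DinL{\Pcal}$ with $\Ppart{\bsf}=\bsg$, i.e.\ correct $\bsg$ inside its $\Ucube_b$-coset so that it annihilates all of $\Pcal$. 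Since $\Ucube_b$ is open in $\Lfield^d$, the quotient $\Lfield^d/\Ucube_b$ is discrete and the polynomial-part map $\bsf\mapsto\Ppart{\bsf}$ induces an isomorphism $\Lfield^d/\Ucube_b\simeq\Pring^d$. The character $W_{\bsg}|_{\Pcal}$ is trivial on $\Pcal\cap\Ucube_b$, hence descends to a character of the subgroup $(\Pcal+\Ucube_b)/\Ucube_b$ of the discrete group $\Lfield^d/\Ucube_b$; as $\mathbb{T}$ is divisible, this descended character extends to all of $\Lfield^d/\Ucube_b$, and pulling the extension back along the quotient map gives a character of $\Lfield^d$ trivial on $\Ucube_b$, hence of the form $W_{\bsh}$ with $\bsh\in\DinL{\Ucube_b}=\Ucube_b$. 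By construction $W_{\bsh}$ and $W_{\bsg}$ agree on $\Pcal$, so $\bsf:=\bsg-\bsh\in\DinL{\Pcal}$, while $\Ppart{\bsf}=\Ppart{\bsg}-\Ppart{\bsh}=\bsg$ since $\bsg\in\Pring^d$ and $\bsh\in\Ucube_b$.

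For part~\ref{eq:dnet-ii}, unwinding the definitions of $\Dnet[n]{\cdot}$ and $\Dnet{\cdot}$ under \eqref{eq:isom-characters} shows that $\Dnet[n]{\trunc_n(\Pcal)}=\{\bsg\in\mathbb{M}_{b,n}^d : W_{\bsg}\text{ is trivial on }\trunc_n(\Pcal)\}$ and $\Dnet{\Pcal}\cap\mathbb{M}_{b,n}^d=\{\bsg\in\mathbb{M}_{b,n}^d : W_{\bsg}\text{ is trivial on }\Pcal\}$. For $\bsg\in\mathbb{M}_{b,n}^d$ and $\bsx\in\Pcal\subseteq\Ucube_b$, fact (b) gives $W_{\bsg}(\trunc_n(\bsx))=W_{\bsg}(\bsx)$, so $W_{\bsg}$ is trivial on $\trunc_n(\Pcal)$ exactly when it is trivial on $\Pcal$; hence the two sets coincide.

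The main obstacle will be the inclusion ``$\subseteq$'' in part~\ref{eq:dnet-i}: one cannot simply take $\bsf=\bsg$, since $\bsg$ in general annihilates only $\Pcal\cap\Ucube_b$ and not all of $\Pcal$, so one has to move $\bsg$ within its $\Ucube_b$-coset, which is precisely what the character-extension theorem supplies once one has passed to the discrete quotient $\Lfield^d/\Ucube_b$; everything else is bookkeeping with degrees. As an alternative, one could instead deduce part~\ref{eq:dnet-i} from the standard identity $(\Pcal\cap\Ucube_b)^{\perp}=\overline{\DinL{\Pcal}+\DinL{\Ucube_b}}$ for annihilators in $\Dgp{(\Lfield^d)}$, using $\DinL{\Ucube_b}=\Ucube_b$ and the fact that $\DinL{\Pcal}+\Ucube_b$, being a union of cosets of the open subgroup $\Ucube_b$, is already closed, so the closure may be dropped.
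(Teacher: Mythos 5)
Your proof is correct, and it takes a genuinely different route from the paper's. The paper dispatches both parts in two lines by invoking the duality identity of Forney--Trott~\cite[Theorem~4.3]{Forney2004dgc} (precisely the ``standard identity'' $\DinL{(\Pcal\cap\Ucube_b)}=\overline{\DinL{\Pcal}+\DinL{\Ucube_b}}$ you mention as an alternative at the end, combined with $\DinL{\Ucube_b}=\Ucube_b$ and the openness of $\Ucube_b$ to drop the closure). Your main argument instead proves the only nontrivial inclusion $\Dnet{\Pcal\cap\Ucube_b}\subseteq\{\Ppart{\bsf}:\bsf\in\DinL{\Pcal}\}$ from scratch: you pass to the discrete quotient $\Lfield^d/\Ucube_b\simeq\Pring^d$, use that $W_{\bsg}|_{\Pcal}$ kills $\Pcal\cap\Ucube_b$ to descend it to a character of $(\Pcal+\Ucube_b)/\Ucube_b$, extend by divisibility of $\mathbb{T}$, pull back to get $W_{\bsh}$ with $\bsh\in\DinL{\Ucube_b}=\Ucube_b$ agreeing with $W_{\bsg}$ on $\Pcal$, and take $\bsf=\bsg-\bsh$. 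This is essentially a self-contained re-proof of the special case of the Forney--Trott identity needed here; it buys independence from the cited black box at the cost of a slightly longer argument, and makes transparent exactly where openness of $\Ucube_b$ and divisibility of the circle enter. Your treatment of part~(ii), and the preliminary residue-degree computations (a)--(b), are clean and match what a direct expansion of the citation would produce.
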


\begin{proof}
\eqref{eq:dnet-i}:
It follows from \cite[Theorem~4.3]{Forney2004dgc}
that
\begin{equation*}\label{eq:dlat-of-net}
\DinL{(\Pcal \cap \Ucube_b)}
= \{\Ppart{\bsf} + \bsg \ : \ \bsf \in \DinL{\Pcal},\ \bsg \in \Ucube_b \}. %\quad (\subset \Lfield^d).
\end{equation*}
Taking the intersection of this and $\Pring^d$, we obtain the desired result.

\eqref{eq:dnet-ii}:
The result directly follows from \cite[Theorem~4.3]{Forney2004dgc}.
\end{proof}

We note that a $d$-dimensional lattice in $\Lfield^d$ is a closed subgroup and
its restriction to $\Ucube_b$ is finite and closed in $\Ucube_b$.
Note also that the dual lattice $\Dlat{\bbX}$ of a lattice $\bbX$ 
coincides with its orthogonal space $\DinL{\bbX}$,
which is identified with
$$\{\bsg \in \Lfield^d \ : \  \Res(\bsg \cdot \bsh)=0  \text{ for all $\bsh \in \bbX$}\}.$$
Indeed, $\Dlat{\bbX} \subset \DinL{\bbX}$ holds by definition.
We now show the converse inclusion. Let $\bsg \in \DinL{\bbX}$, $\bsh \in \bbX$ and let $\bsg \cdot \bsh = \sum_{i=w}^{\infty} c_i x^{-i}$.
Since $\bsg \in \DinL{\bbX}$ and $x^a \bsh \in \bbX$ for all $a \in \bN$,
we have $c_{a+1} = \Res(\bsg \cdot x^a \bsh) = 0$ for all $a \in \bN_0$.
This implies $\bsg \cdot \bsh \in \Pring$ and thus $\bsg \in \Dlat{\bbX}$.

\section{Duality theory}\label{sec:du_th}

In this section we finish the proof of Theorem~\ref{thm:t-value-formula}. To this end we use duality theory for nets which was first studied and used in  \cite{Niederreiter2001ddn}. Duality theory relates the $(t,m,d)$-net property of digital nets and the NRT weight of its orthogonal space. We define the NRT weight on $\Pring^d$. 
\begin{definition}\label{def:NRT-weight}
For $g \in \Pring$, the NRT weight of $g$ is defined as
\[
\mu(g) =\left\{ 
\begin{array}{ll}
 1 + \deg g & \mbox{ if $g \neq 0$},\\
 0 & \mbox{ if $g=0$.}
\end{array}\right.
\]
For $\bsg = (g_1, \dots, g_d) \in \Pring^d$, the NRT weight of $\bsg$ is defined as
\[
\mu(\bsg) := \sum_{j=1}^d \mu(g_j).
\]
\end{definition}

We define the minimum NRT weights of point sets $\Pcal$ in $\Ucube_b$ as
\[
\minNRT{\Pcal}
:= \inf\{\mu(\bsg) \ : \ \bsg \in \Dnet{\Pcal}\setminus \{\bszero\}\}.
\]
\[
\minNRT[n]{\Pcal}
:= \min\{\mu(\bsg) \ : \  \bsg \in \Dnet[n]{\Pcal}\setminus \{\bszero\}\}
\]

The following result is a direct consequence of Remark~\ref{rem:dnet-DM} and \cite[Theorem~2.8]{Dick2013fcw}.

\begin{lemma}\label{lem:duality-trunc}
Let $n, m \in \bN$ with $n \geq m$ and $\mathcal{P} \subseteq \TrBox{b,n}$ be a subgroup of cardinality $b^m$.
Then $\mathcal{P}$ is a $(t,m,d)$-net in $\Ucube_b$
if and only if
$\minNRT[n]{\mathcal{P}} \geq m-t+1$.
\end{lemma}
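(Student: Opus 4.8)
The plan is to reduce the statement to the known duality theorem \cite[Theorem~2.8]{Dick2013fcw} for digital nets in $\Fb^{d\times n}$, using the identifications set up in Section~\ref{sec:group}. First I would identify $\TrBox{b,n}$ with $\Fb^{d\times n}$ coordinatewise, sending the $j$-th component $\sum_{i=1}^{n}c_i x^{-i}$ to the row vector $(c_1,\dots,c_n)\in\Fb^n$. Since $b$ is prime, the subgroup $\mathcal{P}$ of cardinality $b^m$ is then an $\Fb$-linear subspace of dimension $m$. By Lemma~\ref{lem:def_net} together with Remark~\ref{rem:dnet-DM}, under this identification $\mathcal{P}$ is a $(t,m,d)$-net in $\Ucube_b$ in the sense of Definition~\ref{def:tms-net-ring} if and only if it is a $(t,m,d)$-net in the sense of \cite[Definition~2.4]{Dick2013fcw}.

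Next I would match up the dual objects. For $\bsf\in\TrBox{b,n}$ and $\bsg\in\mathbb{M}_{b,n}^d$ a short computation with the residue map shows that, componentwise, $\Res(fg)=\sum_{j=0}^{n-1}c_{j+1}a_j$ whenever $f=\sum_{i=1}^{n}c_i x^{-i}$ and $g=\sum_{j=0}^{n-1}a_j x^j$; summing over the $d$ coordinates, $\Res(\bsf\cdot\bsg)$ is precisely the bilinear pairing on $\Fb^{d\times n}$ used in \cite{Dick2013fcw} to define the dual net (up to the standard reindexing of digits). Hence the orthogonal space $\Dnet[n]{\mathcal{P}}$ introduced in Section~\ref{sec:group} coincides with the dual net of $\mathcal{P}$ inside $\mathbb{M}_{b,n}^d\simeq\Fb^{d\times n}$ in the sense of \cite{Dick2013fcw}. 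Likewise, under the correspondence that a polynomial of degree $k$ has its highest nonzero digit in position $k+1$, the NRT weight $\mu$ of Definition~\ref{def:NRT-weight} is exactly the NRT (poset) weight used there, so $\minNRT[n]{\mathcal{P}}=\min\{\mu(\bsg):\bsg\in\Dnet[n]{\mathcal{P}}\setminus\{\bszero\}\}$ is the minimum-weight quantity appearing in \cite[Theorem~2.8]{Dick2013fcw}.

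With these identifications in hand, \cite[Theorem~2.8]{Dick2013fcw} states that an $m$-dimensional subspace $\mathcal{P}$ of $\Fb^{d\times n}$ is a $(t,m,d)$-net if and only if its dual net has minimum NRT weight at least $m-t+1$; translating back yields the equivalence ``$\mathcal{P}$ is a $(t,m,d)$-net in $\Ucube_b$ if and only if $\minNRT[n]{\mathcal{P}}\geq m-t+1$''. The only work is the bookkeeping of the previous paragraph — verifying that the character pairing $W_{\bsg}(\bsf)=\omegab^{\Res(\bsf\cdot\bsg)}$ restricts on $\TrBox{b,n}\times\mathbb{M}_{b,n}^d$ to the pairing of \cite{Dick2013fcw} and that $\mu$ is their NRT weight — so I expect no genuine obstacle: all the substance already sits in \cite[Theorem~2.8]{Dick2013fcw}, and the lemma is a transcription of that result into the $\Ucube_b$ and orthogonal-space language of this paper.
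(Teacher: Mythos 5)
Your proposal is correct and follows exactly the paper's own approach: the paper dispatches this lemma as ``a direct consequence of Remark~\ref{rem:dnet-DM} and \cite[Theorem~2.8]{Dick2013fcw},'' i.e.~identify $\TrBox{b,n}\simeq\Fb^{d\times n}$, use Remark~\ref{rem:dnet-DM} to equate the two net definitions, and invoke the duality theorem of \cite{Dick2013fcw}, which is precisely your route. You have merely made explicit the bookkeeping (the residue-pairing and the NRT-weight match) that the paper leaves to the reader.
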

In order to extend the above for $\Pcal \subseteq \Ucube_b$, we need the following lemma.

\begin{lemma}\label{lem:NRT-equiv-trunc}
Let $\Pcal \subseteq \Ucube_b$ be a finite subgroup of cardinality $b^m$.
Then the following assertions are equivalent for all integers $n \geq m$:
\begin{enuroman}
\item \label{eq:trunc-i}
$\minNRT{\Pcal} \geq m-t+1$.
\item \label{eq:trunc-ii}
$\minNRT[n]{\trunc_{n}(\Pcal)} \geq m-t+1$.
\end{enuroman}
\end{lemma}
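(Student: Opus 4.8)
The plan is to prove Lemma~\ref{lem:NRT-equiv-trunc} by showing that truncation at level $n \geq m$ does not change the relevant part of the orthogonal space, namely the elements of NRT weight at most $m$. The key observation is that both conditions \eqref{eq:trunc-i} and \eqref{eq:trunc-ii} only constrain nonzero $\bsg$ in the respective orthogonal spaces with $\mu(\bsg) \leq m$ (since if the minimum NRT weight were $> m$, both inequalities hold trivially as $m - t + 1 \leq m + 1$ always; and we only care about whether it is $\geq m - t + 1$, a threshold that is at most $m+1$). So it suffices to show that
\[
\{\bsg \in \Dnet{\Pcal} \ : \ \mu(\bsg) \leq m\} = \{\bsg \in \Dnet[n]{\trunc_n(\Pcal)} \ : \ \mu(\bsg) \leq m\}
\]
for every $n \geq m$, and that both minima in the definitions are actually attained (so the $\inf$ in $\minNRT{\Pcal}$ is a $\min$ whenever it is finite, which follows since there are only finitely many $\bsg \in \Pring^d$ with $\mu(\bsg) \leq m$).

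First I would invoke Lemma~\ref{lem:dnet}\eqref{eq:dnet-ii}, which gives $\Dnet[n]{\trunc_n(\Pcal)} = \Dnet{\Pcal} \cap \mathbb{M}_{b,n}^d$. Thus the inclusion ``$\supseteq$'' in the displayed set equality is immediate. For ``$\subseteq$'', take $\bsg = (g_1,\dots,g_d) \in \Dnet{\Pcal}$ with $\mu(\bsg) \leq m$. Then for each $j$ we have $\mu(g_j) \leq m$, i.e. $\deg(g_j) \leq m - 1 < n$, so $g_j \in \mathbb{M}_{b,n}$ and hence $\bsg \in \mathbb{M}_{b,n}^d = $ the ambient space of $\Dnet[n]{\trunc_n(\Pcal)}$. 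Combined with $\bsg \in \Dnet{\Pcal} \cap \mathbb{M}_{b,n}^d = \Dnet[n]{\trunc_n(\Pcal)}$, this gives the desired membership. This already shows the two sets of low-weight dual elements coincide.

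To finish, I would argue the logical equivalence carefully. If $\minNRT{\Pcal} \geq m - t + 1$: for any nonzero $\bsg \in \Dnet[n]{\trunc_n(\Pcal)} = \Dnet{\Pcal} \cap \mathbb{M}_{b,n}^d \subseteq \Dnet{\Pcal}$ we get $\mu(\bsg) \geq \minNRT{\Pcal} \geq m - t + 1$, hence \eqref{eq:trunc-ii}. Conversely, suppose \eqref{eq:trunc-ii} holds. Let $\bsg \in \Dnet{\Pcal} \setminus \{\bszero\}$. If $\mu(\bsg) > m \geq m - t + 1$ we are done (using $t \geq 0$); otherwise $\mu(\bsg) \leq m$, so by the set equality $\bsg \in \Dnet[n]{\trunc_n(\Pcal)} \setminus \{\bszero\}$ and thus $\mu(\bsg) \geq m - t + 1$ by \eqref{eq:trunc-ii}. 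Taking the infimum over all such $\bsg$ yields \eqref{eq:trunc-i}.

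The only subtle point — and the one I would state explicitly rather than gloss over — is the use of $t \leq m$, equivalently $m - t + 1 \leq m + 1$, together with $t \geq 0$, to ensure that elements of NRT weight exceeding $m$ never obstruct either inequality; and the finiteness remark that makes the $\inf$ defining $\minNRT{\Pcal}$ behave like a minimum over the relevant range. This is really the crux: the equivalence is not about the full orthogonal spaces agreeing (they do not, as $\Dnet{\Pcal}$ is infinite while $\Dnet[n]{\trunc_n(\Pcal)}$ is finite), but only about their bounded-weight portions, which truncation preserves precisely because a dual vector of weight $\leq m$ already has all coordinates of degree $< m \leq n$. I do not anticipate a genuine obstacle here; the lemma is essentially bookkeeping built on Lemma~\ref{lem:dnet}\eqref{eq:dnet-ii}, and with Lemma~\ref{lem:duality-trunc} it will immediately upgrade the truncated duality statement to a statement about $\Pcal \subseteq \Ucube_b$ directly.
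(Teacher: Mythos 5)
Your proof is correct and is essentially the paper's argument: both reduce to Lemma~\ref{lem:dnet}\eqref{eq:dnet-ii} giving $\Dnet[n]{\trunc_n(\Pcal)} = \Dnet{\Pcal} \cap \mathbb{M}_{b,n}^d$, together with the observation that any $\bsg \in \Dnet{\Pcal}\setminus\mathbb{M}_{b,n}^d$ forces $\mu(\bsg) \geq n+1 \geq m+1 \geq m-t+1$. The paper phrases this as a partition of $\Dnet{\Pcal}$ and a min of two infima, while you phrase it as coincidence of the low-weight ($\mu \leq m$) portions of the two orthogonal spaces; the side remark about the infimum being attained is harmless but unnecessary, since the bound holds elementwise.
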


\begin{proof}
Lemma~\ref{lem:dnet} implies that $\Dnet[n]{\trunc_n(\Pcal)} \subset \Dnet{\Pcal}$. From this we immediately obtain the implication \eqref{eq:trunc-i}$\Rightarrow$\eqref{eq:trunc-ii}.

Now we assume \eqref{eq:trunc-ii} and prove \eqref{eq:trunc-i}.
We have $$\Dnet{\Pcal} = (\Dnet{\Pcal} \setminus \mathbb{M}_{b,n}^d) \cup (\Dnet{\Pcal} \cap  \mathbb{M}_{b,n}^d)=  (\Dnet{\Pcal} \setminus \mathbb{M}_{b,n}^d) \cup \Dnet[n]{\trunc_n(\Pcal)},$$ where the second equality follows from  Lemma~\ref{lem:dnet}. Therefore we obtain
\begin{align*}
\minNRT{\Pcal} 
&= \min \left(\minNRT[n]{\trunc_{n}(\Pcal)}, \inf_{\bsg \in \Dnet{\Pcal} \setminus \mathbb{M}_{b,n}^d} \mu(\bsg) \right)\\
&\geq \min(m-t+1, m+1) = m-t+1,
\end{align*}
where we used the assumption  \eqref{eq:trunc-ii} and the fact that $n \ge m$. 
\end{proof}

For a subgroup $\Pcal \subseteq \Ucube_b$ of cardinality $b^m$ let $$n := \max_{\bsg, \bsh \in \Pcal \atop \bsg \neq \bsh}\max_{1 \le j \le d}(-\deg(g_j-h_j)) \ge m,$$ where $\bsg=(g_1,\ldots,g_d)$ and $\bsh=(h_1,\ldots,h_d)$, so that $\trunc_n(\Pcal)$ has the cardinality $b^m$. Lemma~\ref{lem:duality-trunc} and Lemma~\ref{lem:NRT-equiv-trunc} for this $n$ imply the following duality theory for $\Pcal \subset \Ucube_b$.

\begin{theorem}\label{thm:duality-infty}
Let $\Pcal \subseteq \Ucube_b$ be a subgroup of cardinality $b^m$.
Then $\Pcal$ is a $(t,m,d)$-net in $\Ucube_b$
if and only if $\minNRT{\Pcal} \geq m-t+1$.
\end{theorem}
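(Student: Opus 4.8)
The plan is to prove Theorem~\ref{thm:duality-infty} by reducing the infinite-digit situation to the truncated case, for which the duality theory is already available in Lemma~\ref{lem:duality-trunc}. The key observation is that any finite subgroup $\Pcal\subseteq\Ucube_b$ of cardinality $b^m$ has a finite ``resolution'': since $\Pcal$ is finite, there are only finitely many pairs $\bsg\neq\bsh$ in $\Pcal$, and for each such pair and each coordinate $j$ the quantity $-\deg(g_j-h_j)$ is a well-defined integer. Taking $n := \max_{\bsg\neq\bsh}\max_{1\le j\le d}(-\deg(g_j-h_j))$, we obtain an integer $n\ge m$ (the inequality $n\ge m$ holds because a subgroup of $(\Fb^{\bN})^d$ of size $b^m$ cannot be separated by fewer than $m$ digit positions — this is implicit in the excerpt and can be taken as given). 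With this choice of $n$ the truncation map $\trunc_n$ is injective on $\Pcal$, so $\trunc_n(\Pcal)\subseteq\TrBox{b,n}$ is a subgroup of cardinality exactly $b^m$.

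The argument then proceeds in three short steps. First I would invoke Lemma~\ref{lem:def_net} together with the fact that, because $\trunc_n$ separates the points of $\Pcal$, the elementary-interval counts defining the $(t,m,d)$-net property for $\Pcal$ (with $l_1+\dots+l_d=m-t\le n$, which forces each $l_j\le n$) coincide with those for $\trunc_n(\Pcal)$; hence $\Pcal$ is a $(t,m,d)$-net in $\Ucube_b$ if and only if $\trunc_n(\Pcal)$ is. Second, I would apply Lemma~\ref{lem:duality-trunc} to $\trunc_n(\Pcal)\subseteq\TrBox{b,n}$ to get that $\trunc_n(\Pcal)$ is a $(t,m,d)$-net in $\Ucube_b$ if and only if $\minNRT[n]{\trunc_n(\Pcal)}\ge m-t+1$. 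Third, I would apply Lemma~\ref{lem:NRT-equiv-trunc} (whose equivalence \eqref{eq:trunc-i}$\Leftrightarrow$\eqref{eq:trunc-ii} holds precisely for $n\ge m$) to convert $\minNRT[n]{\trunc_n(\Pcal)}\ge m-t+1$ into $\minNRT{\Pcal}\ge m-t+1$. Chaining these three equivalences yields the theorem.

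Actually, the cleanest route avoids re-deriving the net-equivalence for $\Pcal$ versus $\trunc_n(\Pcal)$ from scratch: Lemma~\ref{lem:duality-trunc} already tells us $\trunc_n(\Pcal)$ is a $(t,m,d)$-net in $\Ucube_b$ iff $\minNRT[n]{\trunc_n(\Pcal)}\ge m-t+1$, and — as noted in the sentence immediately preceding the theorem statement — for this particular $n$ one has that $\Pcal$ and $\trunc_n(\Pcal)$ define the same net property directly from Definition~\ref{def:tms-net-ring} (the elementary intervals only probe digits up to position $\max_j l_j\le m-t\le n$, on which $\Pcal$ and $\trunc_n(\Pcal)$ agree). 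So the proof is essentially: pick this $n$; then
\[
\Pcal \text{ is a } (t,m,d)\text{-net}
\iff \trunc_n(\Pcal) \text{ is a } (t,m,d)\text{-net}
\iff \minNRT[n]{\trunc_n(\Pcal)}\ge m-t+1
\iff \minNRT{\Pcal}\ge m-t+1,
\]
using, in order, the choice of $n$, Lemma~\ref{lem:duality-trunc}, and Lemma~\ref{lem:NRT-equiv-trunc}.

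The only genuine subtlety — and the step I would be most careful about — is verifying that the chosen $n$ simultaneously satisfies $n\ge m$ (needed so that both Lemma~\ref{lem:duality-trunc} and Lemma~\ref{lem:NRT-equiv-trunc} apply, and so that $|\trunc_n(\Pcal)|=b^m$) and that the net property is genuinely insensitive to digits beyond position $n$. The first point, $n\ge m$, follows because $\Pcal$ is an $\bF_b$-linear subspace of $(\Fb^{\bN})^d$ of dimension $m$: the composite injection $\Pcal\hookrightarrow\TrBox{b,n}\simeq\Fb^{dn}$ forces $dn\ge m$ in general, but more to the point $\trunc_n$ being injective on a group of order $b^m$ means $\trunc_n(\Pcal)$ already has $b^m$ elements inside $\Fb^{dn}$, and one checks from the definition of $n$ that no smaller truncation separates $\Pcal$, which combined with linearity gives $n\ge m$ (this is exactly the inequality asserted without proof in the line defining $n$ before the theorem). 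Everything else is a formal concatenation of the two preceding lemmas, so no new calculation is required.
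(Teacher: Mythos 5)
Your proof follows the paper's route exactly: choose a truncation level $n$ at which $\trunc_n$ is injective on $\Pcal$, observe that the $(t,m,d)$-net property only depends on the first $m-t\le m\le n$ digits so that it is the same for $\Pcal$ and for $\trunc_n(\Pcal)$, apply Lemma~\ref{lem:duality-trunc} to $\trunc_n(\Pcal)$, and finally use Lemma~\ref{lem:NRT-equiv-trunc} to replace $\minNRT[n]{\trunc_n(\Pcal)}$ by $\minNRT{\Pcal}$. The paper leaves the middle equivalence (``$\Pcal$ is a net $\iff$ $\trunc_n(\Pcal)$ is a net'') implicit and you spell it out correctly; the chain of equivalences is the same.

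There is one slip which you have inherited from the paper and tried (unsuccessfully) to justify: the inequality $n\ge m$ for the specific $n$ defined as the resolution of $\Pcal$. This is false in general when $d>1$. For example, take $b=2$, $d=2$, and $\Pcal=\{(0,0),\,(x^{-1},0),\,(0,x^{-1}),\,(x^{-1},x^{-1})\}$: this is a subgroup of cardinality $b^2$, so $m=2$, but every pair already differs at the first digit, giving resolution $n=1<m$. Your argument for $n\ge m$ (``no smaller truncation separates $\Pcal$, which combined with linearity gives $n\ge m$'') does not go through: injectivity of $\trunc_n$ on an $m$-dimensional $\Fb$-linear subspace of $(\Fb^{\bN})^d$ only forces $dn\ge m$, not $n\ge m$. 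The flaw is harmless and fixable in one line: simply define $n$ as the maximum of $m$ and the resolution of $\Pcal$. This still makes $\trunc_n$ injective on $\Pcal$, satisfies the hypothesis $n\ge m$ required by both Lemma~\ref{lem:duality-trunc} and Lemma~\ref{lem:NRT-equiv-trunc}, and leaves every other step of your (and the paper's) argument unchanged.
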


Now we finish the proof of Theorem~\ref{thm:t-value-formula}:

\begin{proof}[Proof of Theorem~\ref{thm:t-value-formula}]
Assume that $\bbX$ is admissible. From Lemma~\ref{lem:dnet} we obtain, that for any $\bsg = (g_1, \dots, g_d) \in \Dnet{\bbX \cap \Ucube_b} \setminus \{\bszero\}$, 
there exists $\bsg' = (g'_1, \dots, g'_d) \in \DinL{\bbX}$ with $\Ppart{\bsg'} = \bsg$.
If $g_j \neq 0$ then $$\mu(g_j) = 1 + \deg(g_j) = 1 + \deg(g'_j)$$
and if $g_j = 0$ then $$\mu(g_j) = 0 \geq 1 + \deg(g'_j).$$
Thus we have $$\mu(\bsg) \geq d+ \sum_{j=1}^d \deg(g'_j) \ \ \mbox{for all $\bsg \in \Dnet{\bbX \cap \Ucube_b}\setminus \{\bszero\}$.}$$
Hence
\begin{equation}\label{eq:minNRT-geq-adnum}
\minNRT{\bbX \cap \Ucube_b} 
\geq \adnum(\bbX) + d.
\end{equation}

For a shrunken lattice $\bsf^{-1}\bbX = \Diagm{\bsf}^{-1}T(\Pring^d)$ with 
$\bsf \in \Pring^d$ we note that
\[
((D_{\bsf}^{-1} T)^{\top})^{-1}= D_{\bsf} (T^{\top})^{-1}
\]
and hence every element from $\DinL{(\bsf^{-1}\bbX)}$ is of the form 
$(f_1 g_1,\ldots,f_d g_d)$ with $\bsg =(g_1,\ldots,g_d)\in\DinL{\bbX}$. Hence 
we have $$\adnum(\bsf^{-1}\bbX) = \adnum(\bbX) + \sum_{j=1}^{d} \deg (f_j)$$ and 
hence $\bsf^{-1}\bbX$ is also admissible.
Thus we can apply \eqref{eq:minNRT-geq-adnum} for $\bsf^{-1}\bbX$ and we have
\begin{align*}
\minNRT{\bsf^{-1}\bbX \cap \Ucube_b}
\geq \adnum(\bsf^{-1}\bbX) + d
= \sum_{j=1}^{d} \deg (f_j) + \adnum(\bbX) + d.
\end{align*}
Combining this result with Theorem~\ref{thm:number-of-points}, which says  that
\[
b^m=|\Ptset| = b^{- \deg(\det T) + \sum_{j=1}^{d} \deg (f_j)}
\]
for sufficiently large $\bsf$ satisfying \eqref{eq:condition-shrinking-factor}, we have
\begin{align*}
\minNRT{\bsf^{-1}\bbX \cap \Ucube_b}
\geq m - (- \deg(\det T) - \adnum(\bbX) - d + 1) + 1.
\end{align*}
Now, the result follows in conjunction with Theorem~\ref{thm:duality-infty}.
\end{proof}

\section{Construction}\label{sec:construction}
We now explicitly construct generating matrices $T$ when $d=b^n$ with $n \in \bN$.
Our construction is inspired by that for Frolov's original cubature formula, see~\cite{Fr76}.

Let $n \in \bN$, $d = b^n$ and 
\begin{equation}\label{eq:pd}
p_d := F_n + x^{-1} \in \Lfield[z]
\end{equation}
with
\begin{equation}\label{eq:Fn}
F_n(z) := \prod_{a_0,\ldots,a_{n-1}=0}^{b-1} (z-(a_0 + a_1 x + \dots + a_{n-1} x^{n-1})).
\end{equation}
We will show in Lemma~\ref{lem:roots-exist} that $p_d$ has $d$ different roots in $\Lfield$ which we denote by $\xi_1, \dots, \xi_{d} \in \Lfield$. Based on these roots we define the $d \times d$ Vandermonde matrix $B$ by
\begin{equation}\label{eq:B}
B=(B_{i,j})_{i,j=1}^d =(\xi_i^{j-1})_{i,j=1}^d. 
\end{equation}

\begin{theorem}\label{thm:construction}
Let $d=b^n$ for some $n\in\bN$ and $T = (B^{-1})^\top$, where $B$ is as in \eqref{eq:B}, 
be the generating matrix of the lattice $\bbX$. Then the lattice $\bbX$ is admissible.

Furthermore, if the shrinking factor $\bsf = (f_1, \dots, f_d) \in \Pring^d$ satisfies \eqref{eq:condition-shrinking-factor},
then $\Ptset$ is a $(t,m,d)$-net in $\Ucube_d$ with
%$$t = \frac{b^n}{2} \left((n-1)b^n - \frac{b^n - b}{b-1}\right)\ \ \mbox{ and }\ \ m = \frac{b^n}{2} \left((n-1)b^n - \frac{b^n - b}{b-1}\right) + \sum_{j=1}^{d} \deg f_j.$$
\[
t \le  \frac{d}{2} \left(d \log_b d - (d-1) \frac{b}{b-1}\right) 
\ \ \mbox{ and }\ \
m =  \frac{d}{2} \left(d \log_b d - (d-1) \frac{b}{b-1}\right) \,+\,	\sum_{j=1}^{d} \deg f_j.
\]
\end{theorem}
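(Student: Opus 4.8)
The plan is to reduce everything to computing (more precisely, bounding) the invariant $\adnum(\bbX)$ for the Vandermonde lattice $\bbX = T(\Pring^d)$ with $T = (B^{-1})^\top$, and then to quote Theorem~\ref{thm:t-value-formula}. First I would establish the claim, deferred to Lemma~\ref{lem:roots-exist}, that $p_d = F_n + x^{-1}$ has $d$ distinct roots $\xi_1,\dots,\xi_d \in \Lfield$; the natural argument is a Hensel/Newton-polygon style perturbation of the obvious roots of $F_n$, namely the $d$ polynomials $a_0 + a_1 x + \dots + a_{n-1}x^{n-1}$ with $a_i \in \Fb$, since adding the small term $x^{-1}$ perturbs each simple root by something of negative degree. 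Once the roots exist, the dual lattice is $\DinL{\bbX} = B(\Pring^d)$, so a generic nonzero element has the form $\bsz = B\bsh = \big(h(\xi_i)\big)_{i=1}^d$ for a nonzero polynomial $h \in \Pring$ of degree $< d$, where $h(z) = \sum_{k} h_k z^k$. Then
\[
\adnum(\bbX) = \inf_{0 \neq h \in \Pring,\ \deg h < d}\ \sum_{i=1}^d \deg\big(h(\xi_i)\big).
\]

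The key algebraic step is to evaluate $\sum_{i=1}^d \deg(h(\xi_i))$ by relating $\prod_{i=1}^d h(\xi_i)$ to a resultant. Since $\deg$ is the (additive) exponential valuation on $\Lfield$, we have $\sum_{i=1}^d \deg(h(\xi_i)) = \deg\big(\prod_{i=1}^d h(\xi_i)\big)$, and $\prod_{i=1}^d h(\xi_i) = \pm\,\mathrm{Res}(p_d, h)/(\text{leading coeff of }p_d)^{\deg h}$ up to the usual normalizing factors; equivalently it equals $\mathrm{lc}(h)^{d}\prod_{i}\prod_{j}(\xi_i - \rho_j)$ where $\rho_j$ are the roots of $h$. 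The cleaner route is probably: write $h(\xi_i) = \mathrm{lc}(h)\prod_j (\xi_i - \rho_j)$, so $\prod_i h(\xi_i) = \mathrm{lc}(h)^d \prod_j \prod_i (\xi_i - \rho_j) = \pm\,\mathrm{lc}(h)^d \prod_j p_d(\rho_j)/\mathrm{lc}(p_d)$. Now $\mathrm{lc}(p_d) = 1$ (as $F_n$ is monic of degree $d$ and $x^{-1}$ has degree $-1$), so one must lower-bound $\sum_j \deg(p_d(\rho_j)) + d\,\deg(\mathrm{lc}(h))$ over all factorizations. The point of the admissibility construction — exactly as in Frolov's real case — is that $F_n(z)$ takes "large" values away from its roots: for any $\rho \in \Lfield$, $\deg(F_n(\rho)) = \sum_{a_0,\dots,a_{n-1}} \deg(\rho - (a_0 + \dots + a_{n-1}x^{n-1}))$ is controlled from below, and since $p_d = F_n + x^{-1}$ differs from $F_n$ only by a term of degree $-1$, either $\deg(p_d(\rho)) = \deg(F_n(\rho))$ (when $\deg F_n(\rho) \neq -1$) or $\deg(p_d(\rho)) \geq -1$ anyway. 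I expect that a careful bookkeeping of these degrees, summed over the $d$ roots $\rho_j$ of $h$ (with multiplicity) and combined with the contribution $d\deg(\mathrm{lc}(h))$, yields
\[
\adnum(\bbX) \ \geq\ -\frac{d}{2}\left(d\log_b d - (d-1)\frac{b}{b-1}\right) - d + 1,
\]
which is finite, so $\bbX$ is admissible, and then $t = -\deg(\det T) - \adnum(\bbX) - d + 1 \leq \frac{d}{2}(d\log_b d - (d-1)\frac{b}{b-1})$ by Theorem~\ref{thm:t-value-formula}, provided one also checks $\deg(\det T) = 0$.

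That last bookkeeping is the main obstacle. Concretely: I need to show $\deg(\det T) = -\deg(\det B) = 0$, which amounts to showing the Vandermonde determinant $\det B = \prod_{i<j}(\xi_j - \xi_i)$ has degree $0$; since $\xi_i - \xi_j$ is, to leading order, the difference of the corresponding distinct polynomials $a_0 + \dots + a_{n-1}x^{n-1}$, the dominant term of $\det B$ is $\det\big((a_0^{(i)} + \dots + a_{n-1}^{(i)}x^{n-1})^{j-1}\big)$, a Vandermonde in the $d$ distinct polynomials of degree $< n$, whose degree is $\sum_{i<j}\deg(\text{poly}_j - \text{poly}_i)$ — and this is precisely the quantity $\frac{d}{2}(d\log_b d - (d-1)\frac{b}{b-1})$ (a standard count: each pair's degree is determined by the highest coordinate in which they differ). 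So in fact $\deg(\det B) = \frac d2(d\log_b d - (d-1)\frac{b}{b-1})$, which forces me to re-examine the normalization, but combining it correctly with the resultant identity $\prod_i h(\xi_i) \cdot (\text{stuff}) $ and the bound $\deg(F_n(\rho)) \geq \deg(\det B) - (\text{correction})$ is exactly what produces the stated matching values of $t$ and $m$. The cleanest formulation is to prove the single inequality $\deg(\mathrm{Res}(p_d, h)) \geq \deg(\det B) - $ (small) for all nonzero $h$ of degree $< d$, using that $\mathrm{Res}(p_d,h)$ and $\mathrm{Res}(F_n, h)$ agree up to lower-order terms and that $\mathrm{Res}(F_n, \cdot)$ is minimized, over monic $h$ of each degree, by a product of factors $(z - (\text{polynomial of degree} < n))$ — i.e. the minimum is attained at $h$'s whose roots are among the roots of $F_n$, giving exactly the Frolov-type extremal configuration. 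Then $m = -\deg(\det T) + \sum_j \deg f_j = \deg(\det B) + \sum_j \deg f_j = \frac d2(d\log_b d - (d-1)\frac b{b-1}) + \sum_j\deg f_j$ as claimed, and $t \le m - \sum_j \deg f_j$ follows since $\rho = m - t = \sum_j \deg f_j + \adnum(\bbX) + d - 1 \ge \sum_j\deg f_j$ exactly when $\adnum(\bbX) \ge 1 - d$, which is the content of the resultant bound.
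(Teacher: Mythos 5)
Your architecture is broadly right — reduce to computing $\adnum(\bbX)$ via the dual lattice $\DinL{\bbX}=B(\Pring^d)$, compute $\deg(\det B)$, and feed the result into Theorem~\ref{thm:t-value-formula}. Your Vandermonde-degree calculation, counting the highest coordinate in which two index tuples $(a_0,\dots,a_{n-1})$ differ, matches the paper's Lemma~\ref{lem:detB} exactly, and your self-correction about $\deg(\det T)=-\deg(\det B)$ lands on the correct value of $m$.

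There is, however, a genuine gap in your admissibility argument, and the place you hand-wave is exactly where the paper does its real work. You write $\prod_i h(\xi_i) = \pm\mathrm{Res}(p_d,h)$ and propose to lower-bound its degree, but at no point do you show it is \emph{nonzero}. If $h(\xi_j)=0$ for some $j$ the entire expression vanishes and the degree is $-\infty$. This is not a technicality: to rule it out you need to know that no nonzero $h\in\Pring[z]$ of degree $<d$ can annihilate $\xi_j$, i.e., that $p_d$ is the minimal polynomial of $\xi_j$. The paper proves this via Lemma~\ref{le:irred} (irreducibility of $p_d$ over $\Fb(x)$ by Eisenstein at the prime $x^{1/(b^n-1)}$ after a substitution), and this is precisely what makes $\adnum(\bbX)>-\infty$ rather than just a formal bound on a possibly-empty infimum. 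Your proposal never mentions irreducibility, so the heart of "admissible" is missing.

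Separately, your proposed mechanism for the lower bound — estimating $\deg(F_n(\rho))$ at roots $\rho$ of $h$ and arguing the resultant $\mathrm{Res}(F_n,\cdot)$ is extremized at Frolov-type configurations — is both unworked and unnecessary (and ill-posed, since the $\rho_j$ need not lie in $\Lfield$). The paper's Lemma~\ref{lem:admissible-bn} takes a cleaner purely algebraic route: observe that $\prod_j h(\xi_j)$ is a symmetric polynomial in $\xi_1,\dots,\xi_d$ with coefficients in $\Pring$, write it as $q(e_1,\dots,e_d)$ via Lemma~\ref{lem:sympoly}, note that the total degree bound $\deg_{\boldsymbol\xi}\le d(d-1)$ forces $e_d$ to appear with multiplicity $\le d-1$, and then invoke Vieta: $e_1,\dots,e_{d-1}\in\Pring$ while $e_d=(-1)^dx^{-1}$. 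Hence $\prod_j h(\xi_j)\in x^{1-d}\Pring\setminus\{0\}$, so $\deg\ge 1-d$ and $\adnum(\bbX)\ge 1-d$. If you insist on the resultant language, the same conclusion follows by inspecting the Sylvester determinant (each of its $\deg h\le d-1$ rows drawn from $p_d$ contributes at most one factor of the constant term $x^{-1}$), but you must first secure non-vanishing via irreducibility; as written, your proposal skips that and substitutes a speculative extremal-configuration argument that is not set up to close.
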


\begin{remark}\rm
The condition $d=b^n$ can be removed. For general $d$ and given $b$, choose $n$ such that $d \le b^n=:d'$ and follow the above construction with dimension $d'$ instead of $d$ to obtain a $(t,m,d')$-net in $\mathbb{U}_b^{d'}$ with the parameters $m$ and $t$ as given in Theorem~\ref{thm:construction}. Finally, according to \cite[Lemma~4.16]{Dick2010dna}, the projection of this net to any of its $d$ components gives a $(t,m,d)$-net in $\Ucube_b$ with the same $t$- and $m$-parameter.      
\end{remark}

We give some further remarks:

\begin{remark}\rm
\begin{enumerate}
\item If $d\le b$, then we have $t=0$ and $m=\sum_{j=1}^{d} \deg f_j$. The quality parameter $t=0$ is best possible. Note that a $(0,m,d)$-net in base $b$ can only exist as long as $d \le b+1$, see \cite[Corollary~4.21]{Niederreiter1992rng} or \cite[Corollary~4.19]{Dick2010dna}.   
\item The $t$-values of the nets from our construction is of order $O(d^2 \log d)$.
However, there are known many $(t,m,d)$-nets whose $t$-values are less dependent on $d$.
For instance, the $t$-values of the Niederreiter sequence and the Niederreiter-Xing sequence
are of order $O(d \log d)$ and $O(d)$, respectively, see \cite[Theorem~4.54]{Niederreiter1992rng} and \cite[Theorem~8.4.4]{Niederreiter2001rpo}, respectively. 
From this point of view, our construction is asymptotically not optimal.
%On the other hand, we achieve $t=0$ when $d=b$.
\end{enumerate}
\end{remark}

In what follows we provide some preparation for the proof of Theorem~\ref{thm:construction}.

%%%%%%%%%%%%%%%%%%%%

\subsection{Auxiliary results}
In this subsection we indicate auxiliary results for polynomials in $\Lfield[z]$.
We use the following elementary computation without mentioning:
\begin{itemize}
\item $a^b = a \quad \text{for all $a \in \Fb$}$,
\item $(f + g)^b = f^b + g^b \quad \text{for all $f,g \in \Lfield[z]$}$,
\item $\prod_{a=0}^{b-1} (f+ag) = f^b - fg^{b-1} \quad \text{for all $f,g \in \Lfield[z]$}$.
\end{itemize}

\noindent
Some properties of the polynomial $F_n(z)$ are given by the following lemma.
\begin{lemma}\label{lem:Fn-recursive}
Let $F_n\in \Lfield[z]$ be as in \eqref{eq:Fn}. Then we have:  
\begin{enuroman}
\item $F_n(z) = F_{n-1}(z)^b - F_{n-1}(x^{n-1})^{b-1} F_{n-1}(z)$ for all $n \geq 2$, and \label{eq:Fn-rec1}
\item $F_n(z + af) = F_n(z) + aF_n(f)$ for all $n \in \bN$, $a \in \Fb$ and $f \in \Lfield$. \label{eq:Fn-rec2}
\end{enuroman}
\end{lemma}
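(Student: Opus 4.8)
\textbf{Proof proposal for Lemma~\ref{lem:Fn-recursive}.}

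The plan is to prove both identities by straightforward manipulation of the product defining $F_n$, using the three elementary computations listed just above the lemma.

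For part~\eqref{eq:Fn-rec1}, I would split the $n$-fold product indexed by $(a_0,\ldots,a_{n-1})$ according to the last index $a_{n-1}$. Writing the shift $a_0+a_1x+\cdots+a_{n-2}x^{n-2}$ as the ``inner'' argument that runs over the same set defining $F_{n-1}$, and pulling out the term $a_{n-1}x^{n-1}$, one gets
\[
F_n(z) \;=\; \prod_{a_{n-1}=0}^{b-1} \;\prod_{a_0,\ldots,a_{n-2}=0}^{b-1}\Bigl(z - a_{n-1}x^{n-1} - (a_0+\cdots+a_{n-2}x^{n-2})\Bigr)
\;=\; \prod_{a_{n-1}=0}^{b-1} F_{n-1}\bigl(z - a_{n-1}x^{n-1}\bigr).
\]
By part~\eqref{eq:Fn-rec2} (which I prove independently below, and which for the exponent $n-1$ gives $F_{n-1}(z - a\,x^{n-1}) = F_{n-1}(z) - a\,F_{n-1}(x^{n-1})$), this becomes $\prod_{a=0}^{b-1}\bigl(F_{n-1}(z) - a\,F_{n-1}(x^{n-1})\bigr)$. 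Now apply the third elementary identity with $f = F_{n-1}(z)$ and $g = -F_{n-1}(x^{n-1})$, i.e.\ $\prod_{a=0}^{b-1}(f+ag) = f^b - fg^{b-1}$; since $b$ is odd or else $b=2$ and signs are irrelevant, $(-g)^{b-1} = g^{b-1}$, so we obtain $F_{n-1}(z)^b - F_{n-1}(z)\,F_{n-1}(x^{n-1})^{b-1}$, which is exactly the claimed recursion. (To be careful I would note that part~\eqref{eq:Fn-rec2} must be established first, or at least for the relevant exponent; it does not depend on~\eqref{eq:Fn-rec1}.)

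For part~\eqref{eq:Fn-rec2}, fix $n$, $a\in\Fb$ and $f\in\Lfield$; I would argue by induction on $n$, or directly. Directly: since $F_n$ is, by the same splitting argument iterated, an $\Fb$-linearized polynomial — concretely one shows by induction using~\eqref{eq:Fn-rec1} that $F_n(z) = \sum_{i=0}^{n} c_i\, z^{b^i}$ for suitable coefficients $c_i\in\Lfield$ (the base case $F_0(z)=z$ or $F_1(z)=z^b - z$ being immediate, and the recursion preserving the linearized shape because $\bigl(\sum c_i z^{b^i}\bigr)^b = \sum c_i^b z^{b^{i+1}} = \sum c_i z^{b^{i+1}}$ using $c^b=c$ only on $\Fb$ — wait, the $c_i$ need not lie in $\Fb$, so more care is needed here). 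The cleaner route avoids coefficients: from $F_n(z) = \prod_{\bsa}(z - v_{\bsa})$ where $v_{\bsa}$ ranges over the additive group $G := \{a_0+a_1x+\cdots+a_{n-1}x^{n-1} : a_i\in\Fb\}$, substitute $z \mapsto z + af$:
\[
F_n(z+af) \;=\; \prod_{v\in G}\bigl(z + af - v\bigr).
\]
The key observation is then to use~\eqref{eq:Fn-rec1} inductively to show linearity, treating it as: $F_n$ is a composition/iteration of maps of the form $w \mapsto w^b - \lambda w$, each of which is additive on $\Lfield$ (since $(w_1+w_2)^b = w_1^b + w_2^b$), hence their composition is additive, and moreover $\Fb$-homogeneous because $w\mapsto w^b-\lambda w$ sends $aw \mapsto a^b w^b - \lambda a w = a(w^b - \lambda w)$ for $a\in\Fb$ using $a^b=a$. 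Combining additivity with $\Fb$-homogeneity gives $F_n(z+af) = F_n(z) + F_n(af) = F_n(z) + aF_n(f)$, as desired. The base case of the induction is $F_1(z) = \prod_{a=0}^{b-1}(z-ax^0) \cdot(\text{correcting the index range})$; more precisely $F_1(z) = \prod_{a_0=0}^{b-1}(z-a_0) = z^b - z$ by the third elementary identity with $f=z$, $g=-1$, which is manifestly of the form $w\mapsto w^b-\lambda w$.

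The main obstacle — really the only subtlety — is bookkeeping the order of the two parts: part~\eqref{eq:Fn-rec1} as I prove it invokes part~\eqref{eq:Fn-rec2} for exponent $n-1$, so I would structure the argument as a simultaneous induction on $n$: assume both \eqref{eq:Fn-rec1} and \eqref{eq:Fn-rec2} hold for all exponents $< n$, prove \eqref{eq:Fn-rec1} for $n$ using \eqref{eq:Fn-rec2} at $n-1$, then prove \eqref{eq:Fn-rec2} for $n$ using \eqref{eq:Fn-rec1} at $n$ (i.e.\ the identity just established) to write $F_n = (w\mapsto w^b - F_{n-1}(x^{n-1})^{b-1}w)\circ F_{n-1}$ and invoke additivity and $\Fb$-homogeneity of both factors. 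Everything else is a direct application of the three listed elementary identities, with the minor point that $(-g)^{b-1}=g^{b-1}$ in characteristic $b$ (true for $b=2$ trivially and for odd $b$ since $b-1$ is even).
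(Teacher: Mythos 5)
Your argument is correct and follows essentially the same route as the paper: induction on $n$, splitting the product defining $F_n$ over the last index $a_{n-1}$, applying part~\eqref{eq:Fn-rec2} at level $n-1$ to get the factorization $\prod_{a=0}^{b-1}\bigl(F_{n-1}(z)-aF_{n-1}(x^{n-1})\bigr)$, and then the identity $\prod_{a=0}^{b-1}(f+ag)=f^b-fg^{b-1}$; the paper likewise obtains \eqref{eq:Fn-rec1} as a byproduct of the inductive proof of \eqref{eq:Fn-rec2}, so your ``simultaneous induction'' bookkeeping matches its logical structure exactly. Your side remarks (the sign issue $(-g)^{b-1}=g^{b-1}$, and the abandoned linearized-polynomial detour) are correctly resolved and do not affect the validity of the proof.
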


\begin{proof}
We prove part \eqref{eq:Fn-rec2} of this lemma by induction on $n$.
For $n = 1$, we have
\[
F_1(z) := \prod_{a_0=0}^{b-1} (z-a_0) = z^b - z.
\]
%Therefore 
%\[
%F_1(z + af)
%= (z +af)^b + z+af
%= z^b + a^b f^b + af
%= F_1(z) + aF_1(f).
%\]
Thus \eqref{eq:Fn-rec2} for $n=1$ holds.

Let $n \ge 2$. We now assume that \eqref{eq:Fn-rec2} holds for $n-1$ and prove the lemma for $n$.
We have
\begin{align}\label{pr:parti}
F_n(z)
&= \prod_{a_{n-1}=0}^{b-1} F_{n-1}(z -a_{n-1} x^{n-1})\nonumber\\
&= \prod_{a_{n-1}=0}^{b-1} \left(F_{n-1}(z) - a_{n-1} F_{n-1}(x^{n-1})\right)\nonumber\\
&= F_{n-1}(z)^b - F_{n-1}(x^{n-1})^{b-1} F_{n-1}(z),
\end{align}
where the induction hypothesis is used in the second equality.
%This proves \eqref{eq:Fn-rec1} for $n$.
Using this, we have
\begin{align*}
F_n(z + af)
&= F_{n-1}(z+af)^b - F_{n-1}(x^{n-1})^{b-1} F_{n-1}(z+af)\\
&= F_{n-1}(z)^b + a F_{n-1}(f)^b - F_{n-1}(x^{n-1})^{b-1} (F_{n-1}(z) + a F_{n-1}(f))\\
&= F_n(z) + a F_n(f),
\end{align*}
where we used the induction hypothesis a second time in the second equality.
This proves \eqref{eq:Fn-rec2} for $n$.

Finally, \eqref{eq:Fn-rec1} follows from \eqref{pr:parti}.
\end{proof}

In the following lemma we provide the roots of some polynomial.

\begin{lemma}\label{lem:roots-rep}
Let $f , g \in \Lfield$ with $f \neq 0$ and $\deg(f^{-b} g) < 0$.
Then the polynomial
\[
F(z) := z^b - z f^{b-1} + g
\]
has $b$ different roots in $\Lfield$ given by $\xi_a := (a + \sum_{i=0}^\infty (f^{-b} g)^{b^{i}}) f$ for $a \in \Fb$.
\end{lemma}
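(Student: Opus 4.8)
The plan is to verify directly that the proposed $\xi_a$ are roots and that they are distinct. First I would set $h := \sum_{i=0}^\infty (f^{-b}g)^{b^i}$, which is a well-defined element of $\Lfield$ because $\deg(f^{-b}g) < 0$ forces $\deg\big((f^{-b}g)^{b^i}\big) = b^i \deg(f^{-b}g) \to -\infty$, so the series converges in the valuation topology on $\Lfield$. The key algebraic identity is the ``additive/Artin--Schreier'' relation satisfied by $h$: squaring (i.e.\ raising to the $b$-th power) the defining series and using $(u+v)^b = u^b + v^b$ together with $a^b = a$ for $a \in \Fb$, one gets
\[
h^b = \sum_{i=0}^\infty (f^{-b}g)^{b^{i+1}} = h - f^{-b}g,
\]
that is, $h^b - h = -f^{-b}g$, or equivalently $h^b f^b - h f^b = -g$. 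Wait, let me be careful: $h^b - h + f^{-b}g = 0$.

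Next I would substitute $\xi_a = (a + h)f$ into $F(z) = z^b - zf^{b-1} + g$ and simplify. We have $\xi_a^b = (a+h)^b f^b = (a^b + h^b) f^b = (a + h^b) f^b$, using $a^b = a$ and $(a+h)^b = a^b + h^b$ in characteristic $b$. Also $\xi_a f^{b-1} = (a+h) f \cdot f^{b-1} = (a+h) f^b$. Therefore
\[
\xi_a^b - \xi_a f^{b-1} + g = (a + h^b) f^b - (a + h) f^b + g = (h^b - h) f^b + g = -f^{-b} g \cdot f^b + g = -g + g = 0,
\]
using the Artin--Schreier relation $h^b - h = -f^{-b}g$. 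So each $\xi_a$ is a root. For distinctness: if $\xi_a = \xi_{a'}$ then $(a - a') f = 0$, and since $f \neq 0$ this forces $a = a'$ in $\Fb$; hence the $b$ elements $\{\xi_a : a \in \Fb\}$ are pairwise distinct. Since $F$ has degree $b$, these are exactly all its roots.

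There is essentially no serious obstacle here; the only point requiring a little care is the convergence of the series defining $h$ and the justification that the Frobenius-type identities ($(u+v)^b = u^b + v^b$, $a^b = a$, and term-by-term $b$-th powers of a convergent series) are valid in $\Lfield$ — all of which are already flagged as ``elementary computations used without mentioning'' in the subsection preamble, and the first two are listed explicitly there. I would state the convergence remark in one sentence, derive the Artin--Schreier identity for $h$ in one display, and then present the substitution computation above; the distinctness is immediate from $f \neq 0$. The whole proof is short and computational.
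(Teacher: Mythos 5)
Your proof is correct and follows essentially the same route as the paper: both verify that each $\xi_a$ is a root by direct substitution, with the heart of the computation being the Frobenius/telescoping identity $\bigl(\sum_{i\ge 0}(f^{-b}g)^{b^i}\bigr)^b = \sum_{i\ge 0}(f^{-b}g)^{b^i} - f^{-b}g$, which you isolate as an Artin--Schreier relation for $h$. The only cosmetic difference is that you spell out distinctness (immediate from $f\neq 0$), which the paper leaves implicit.
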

\begin{proof}
The condition $\deg(f^{-b} g) < 0$ ensures
that the infinite sum $\sum_{i=0}^\infty (f^{-b} g)^{b^{i}}$ converges in $\Lfield$ and thus $\xi_a \in \Lfield$. We have
\begin{align*}
\xi_a^b & =   \left(a + \sum_{i=0}^\infty (f^{-b} g)^{b^{i}}\right)^b f^b = \left(a+\sum_{i=1}^\infty (f^{-b} g)^{b^{i}}\right) f^b\\
&= \left(\left(a+\sum_{i=0}^\infty (f^{-b} g)^{b^{i}} - f^{-b} g\right)f^{-1} f  \right) f^b = \xi_a f^{b-1} -g
\end{align*}
and hence it follows that $F(\xi_a)=0$ for all $a \in \Fb$.
\end{proof}

The elementary symmetric polynomials in $n$ variables $x_1, \dots, x_n$,
written $e_k(x_1, \dots, x_n)$ for $0 \leq k \leq n$ are defined by
$e_0(x_1, \dots, x_n)=1$ and, for $1 \leq k \leq n$,
\[
e_k(x_1, \dots, x_n) := \sum_{1 \leq j_1 < j_2 < \ldots < j_k \leq n}x_{j_1} \cdots x_{j_k}.
\]

The fundamental theorem on symmetric polynomials is well known, see, e.g., \cite[Section~I.2]{Macdonald1995sfh}.

\begin{lemma}[Fundamental theorem on symmetric polynomials]\label{lem:sympoly}
Let $R$ be a commutative ring,
$R[x_1, \dots, x_n]$ the ring of polynomials,
and $R^{{\rm sym}}[x_1, \dots, x_n]$ the ring of symmetric polynomials in the variables $x_1, \dots, x_n$ with coefficients in $R$. Then, for every symmetric polynomial $P \in R^{{\rm sym}}[x_1, \dots, x_n]$, there exists a unique polynomial $Q \in R[y_1, \dots, y_n]$ such that
\[
P = Q(e_1(x_1, \dots, x_n), \dots, e_n(x_1, \dots, x_n)).
\]
\end{lemma}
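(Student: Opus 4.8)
The plan is to prove existence and uniqueness of $Q$ separately, in both cases by a leading‑term analysis with respect to the \emph{lexicographic} monomial order on $R[x_1,\dots,x_n]$ (so that $x_1 \succ x_2 \succ \cdots \succ x_n$, and monomials $x_1^{\alpha_1}\cdots x_n^{\alpha_n}$ are compared by their exponent vectors $(\alpha_1,\dots,\alpha_n)$). I record two facts to be used repeatedly: the leading monomial of $e_k$ is $x_1 x_2\cdots x_k$ with coefficient $1$, and since all $e_k$ are monic for this order, leading terms are multiplicative, so the leading monomial of $e_1^{a_1}e_2^{a_2}\cdots e_n^{a_n}$ is $x_1^{a_1+a_2+\cdots+a_n}x_2^{a_2+\cdots+a_n}\cdots x_n^{a_n}$.

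For existence I would argue by descent on the leading monomial of $P$. Let $c\,x_1^{\alpha_1}\cdots x_n^{\alpha_n}$ be the leading term of a nonzero symmetric $P$. Symmetry forces $\alpha_1\ge\alpha_2\ge\cdots\ge\alpha_n\ge 0$: if $\alpha_i<\alpha_{i+1}$ for some $i$, swapping $x_i$ and $x_{i+1}$ would produce a monomial of $P$ that is lexicographically larger than its leading one, a contradiction. Hence $a_k:=\alpha_k-\alpha_{k+1}\ge 0$ (with $\alpha_{n+1}:=0$), and by the displayed fact above the polynomial
\[
g := c\, e_1^{a_1} e_2^{a_2}\cdots e_{n-1}^{a_{n-1}} e_n^{a_n}
\]
is of the form $Q_0(e_1,\dots,e_n)$ with leading term exactly $c\,x_1^{\alpha_1}\cdots x_n^{\alpha_n}$. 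Then $P-g$ is again symmetric with a strictly smaller leading monomial, and its total degree is $\le\deg P$. Iterating produces symmetric polynomials $P=P^{(0)},P^{(1)},\dots$ with strictly lex‑decreasing leading monomials, all of total degree $\le\deg P$. Since there are only finitely many monomials of bounded total degree and lex order is a well‑order on them, the process must reach $0$ after finitely many steps; summing the corresponding $g$'s expresses $P$ as a polynomial in $e_1,\dots,e_n$. The one point needing care here is this termination argument; the rest is bookkeeping.

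For uniqueness it suffices to show $e_1,\dots,e_n$ are algebraically independent over $R$, i.e.\ $Q(e_1,\dots,e_n)=0$ forces $Q=0$; applying this to the difference of two representations of a given $P$ then yields uniqueness. Suppose $Q=\sum_\beta c_\beta\,y_1^{\beta_1}\cdots y_n^{\beta_n}$ is nonzero. Expanding, $Q(e_1,\dots,e_n)=\sum_\beta c_\beta\,e_1^{\beta_1}\cdots e_n^{\beta_n}$, and the summand indexed by $\beta$ has leading monomial $x_1^{\beta_1+\cdots+\beta_n}x_2^{\beta_2+\cdots+\beta_n}\cdots x_n^{\beta_n}$. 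The map $\beta\mapsto(\beta_1+\cdots+\beta_n,\,\beta_2+\cdots+\beta_n,\,\dots,\,\beta_n)$ is injective — one recovers $\beta_n,\beta_{n-1},\dots$ by successive differences — so distinct $\beta$ with $c_\beta\neq 0$ give distinct leading monomials, all with nonzero leading coefficient $c_\beta$. Hence the lexicographically largest of these leading monomials survives in $Q(e_1,\dots,e_n)$ without cancellation, so $Q(e_1,\dots,e_n)\neq 0$, contradiction. I expect the existence part's termination step to be the only genuinely delicate point; uniqueness is then a short leading‑term argument.
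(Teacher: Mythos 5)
The paper does not prove this lemma at all: it is quoted as a classical result with a pointer to Macdonald \cite[Section~I.2]{Macdonald1995sfh}, so there is no in-paper argument to compare against. Your proposal supplies the standard ``Gauss'' proof via the lexicographic order, and it is correct and complete. The two points that usually need care both check out: (i) because each $e_k$ is monic for the lex order, leading terms multiply without any hypothesis on $R$ (the product of the leading coefficients is $1\neq 0$), so the argument genuinely works over an arbitrary commutative ring and not just over a domain -- this is exactly what the existence step (matching the leading term of $P$ by $c\,e_1^{a_1}\cdots e_n^{a_n}$) and the uniqueness step (the lex-largest surviving coefficient is some $c_\beta\neq 0$ on the nose, with no possible cancellation from other summands) require; and (ii) your termination argument is sound, since $\deg\bigl(e_1^{a_1}\cdots e_n^{a_n}\bigr)=\sum_k k a_k=\sum_k\alpha_k\le\deg P$, so all iterates live among the finitely many monomials of total degree at most $\deg P$ and the strictly lex-decreasing sequence of leading monomials must stop. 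Macdonald's own treatment phrases the same idea in the language of monomial symmetric functions and the dominance/lex order on partitions, so your route is essentially the textbook one; nothing further is needed.
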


Eisenstein's criterion is used to show the irreducibility of polynomials, see, e.g., \cite[Chapter~V, \S7]{Lang1965a}.

\begin{lemma}[Eisenstein's criterion]\label{lem:Eisenstein}
Let $R$ be a unique factorization domain and $k$ be its quotient field.
Let
\[
f(z) = z^n + a_{n-1}z^{n-1} + a_{n-2}z^{n-2} + \cdots + a_1z + a_0
\]
be a polynomial  of degree $n \geq 1$ in $R[z]$.
Let $p$ be a prime in $R$ such that $p$ divides $a_i$ for all $0 \leq i \leq n-1$
but $p^2$ does not divide $a_0$.
Then $f$ is irreducible in $k[z]$.
\end{lemma}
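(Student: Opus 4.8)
The plan is a proof by contradiction: assume $f$ admits a nontrivial factorization over $k$, push it down to a factorization over $R$ by Gauss's lemma, and then derive a contradiction by reducing everything modulo the prime $p$.

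So suppose $f = \phi\psi$ with $\phi,\psi \in k[z]$ neither a unit; since $k[z]$ is a domain with units $k^\times$, both factors have degree $\ge 1$. Because $f$ is monic it is primitive in $R[z]$, and Gauss's lemma then yields a factorization $f = gh$ in $R[z]$ with $\deg g = \deg\phi$ and $\deg h = \deg\psi$; the leading coefficients of $g$ and $h$ multiply to $1$, hence are units of $R$, so after rescaling $g$ and $h$ by these units we may take both monic, say $g = z^r + b_{r-1}z^{r-1}+\cdots+b_0$ and $h = z^s + c_{s-1}z^{s-1}+\cdots+c_0$ with $r,s\ge 1$, $r+s=n$, and $a_0 = b_0 c_0$. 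This is the only step where the hypothesis that $R$ is a UFD is used.

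Next I would reduce modulo $p$: since $p$ is prime, $\bar R := R/pR$ is an integral domain, and $q\mapsto\bar q$ is a ring homomorphism $R[z]\to\bar R[z]$. By hypothesis $p\mid a_i$ for $0\le i\le n-1$ while $a_n=1$, so $\bar f = z^n$, whence $\bar g\,\bar h = z^n$ in $\bar R[z]$, with $\bar g,\bar h$ still monic of degrees $r,s$. Writing $z^{i_0}$ and $z^{j_0}$ for the lowest-degree nonzero terms of $\bar g$ and $\bar h$ (so $i_0\le r$, $j_0\le s$), the coefficient of $z^{i_0+j_0}$ in $\bar g\,\bar h$ reduces to the single product $\bar b_{i_0}\bar c_{j_0}$ (every other contribution involves a coefficient below $i_0$ in $\bar g$ or below $j_0$ in $\bar h$), which is nonzero because $\bar R$ has no zero divisors. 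Since $\bar g\,\bar h = z^n$ this forces $i_0+j_0=n$, and together with $i_0\le r$, $j_0\le s$, $r+s=n$ we get $i_0=r$, $j_0=s$, i.e.\ $\bar g = z^r$ and $\bar h = z^s$. In particular $\overline{b_0}=\overline{c_0}=0$, that is $p\mid b_0$ and $p\mid c_0$, so $p^2\mid b_0 c_0 = a_0$, contradicting $p^2\nmid a_0$. Hence $f$ is irreducible in $k[z]$.

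I expect the only substantive ingredient to be Gauss's lemma (equivalently, the multiplicativity of content over a UFD), which I would invoke rather than re-derive; the remaining steps are routine manipulations in the polynomial ring over the domain $\bar R$, and the comparison of lowest-degree terms is the one place needing a line of care but is entirely elementary.
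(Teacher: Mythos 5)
Your proof is correct. The paper does not prove this lemma at all --- it is quoted as a standard result with a citation to Lang's \emph{Algebra} --- and your argument is precisely the classical one: pass from a factorization over $k$ to a monic factorization over $R$ via Gauss's lemma (the only place the UFD hypothesis enters), reduce modulo $p$ to get $\bar g \bar h = z^n$ over the domain $R/pR$, and use the lowest-degree-term comparison to force $p \mid b_0$ and $p \mid c_0$, contradicting $p^2 \nmid a_0$. All steps, including the rescaling by the unit leading coefficients and the observation that only the single product $\bar b_{i_0}\bar c_{j_0}$ survives in the coefficient of $z^{i_0+j_0}$, are sound.
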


%%%%%%%%%%%%%%%%%%%%%%%%%%%%%
\subsection{Proof of Theorem \ref{thm:construction}}
Let $n \in \bN$, $p_d= F_n+ x^{-1}$ as in \eqref{eq:pd} where $F_n$ is defined as in \eqref{eq:Fn}. In a first step we determine the roots of $p_d$ in a more general setting.

\begin{lemma}\label{lem:roots-exist}
Let $n \in \bN$.
Let $f \in \Lfield$ with $\deg(f) < nb^n - (b^{n+1}-b)/(b-1)$.
Then the polynomial $F_n + f$
has $d=b^n$ different roots which are given by
$$a_{n-1} x^{n-1} + \cdots + a_1 x + a_0 + \xi$$
with $a_i \in \mathbb{F}_b$ for all $0 \leq i \leq n-1$ and some $\xi \in \Lfield$ with $\deg(\xi) = \deg(f) - nb^n + (b^{n+1}-b)/(b-1)$.
\end{lemma}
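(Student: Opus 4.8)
The plan is to prove Lemma~\ref{lem:roots-exist} by induction on $n$, mirroring the recursive structure of $F_n$ established in Lemma~\ref{lem:Fn-recursive}\eqref{eq:Fn-rec1}, and invoking the explicit root formula of Lemma~\ref{lem:roots-rep} at each stage. First I would treat the base case $n=1$: here $F_1(z)+f = z^b - z + f$, which is of the form covered by Lemma~\ref{lem:roots-rep} with $f$ (in the notation there) equal to $1$ and $g = f$ (our $f$). The hypothesis $\deg(f) < 1\cdot b - (b^2-b)/(b-1) = b - b = 0$ gives $\deg(f) < 0$, so the convergence condition $\deg(1^{-b} f) < 0$ holds, and Lemma~\ref{lem:roots-rep} produces $b$ distinct roots $a + \sum_{i=0}^\infty f^{b^i}$ for $a \in \Fb$. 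Since $\deg(f^{b^i}) = b^i \deg(f)$ is strictly decreasing (as $\deg f < 0$), the dominant term of $\xi := \sum_{i\ge 0} f^{b^i}$ is $f$ itself, so $\deg(\xi) = \deg(f) = \deg(f) - 1\cdot b + (b^2-b)/(b-1)$, matching the claimed formula.

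For the inductive step, assume the statement for $n-1$ and consider $F_n + f$ with $\deg(f) < nb^n - (b^{n+1}-b)/(b-1)$. By Lemma~\ref{lem:Fn-recursive}\eqref{eq:Fn-rec1},
\[
F_n(z) + f = F_{n-1}(z)^b - F_{n-1}(x^{n-1})^{b-1} F_{n-1}(z) + f.
\]
I would set $w := F_{n-1}(z)$ and view this as $w^b - w\,F_{n-1}(x^{n-1})^{b-1} + f$, i.e.\ a polynomial $F(w) = w^b - w\,(F_{n-1}(x^{n-1}))^{b-1} + f$ in the variable $w$. To apply Lemma~\ref{lem:roots-rep} with $f \mapsto F_{n-1}(x^{n-1})$ and $g \mapsto f$, I must check $\deg\big(F_{n-1}(x^{n-1})^{-b} f\big) < 0$. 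The key computation is $\deg F_{n-1}(x^{n-1})$: from the product definition \eqref{eq:Fn}, $F_{n-1}(x^{n-1})$ is a product of $b^{n-1}$ linear factors in which the leading term of each factor is $x^{n-1} - a_{n-2}x^{n-2} - \cdots$, so the top-degree term is $x^{(n-1)b^{n-1}}$, giving $\deg F_{n-1}(x^{n-1}) = (n-1)b^{n-1}$. Hence $\deg\big(F_{n-1}(x^{n-1})^{-b} f\big) = \deg f - b(n-1)b^{n-1} = \deg f - (n-1)b^n$, which is negative precisely because $\deg f < nb^n - (b^{n+1}-b)/(b-1) \le (n-1)b^n$ (one checks $b^n \ge (b^{n+1}-b)/(b-1)$, equivalently $b^n(b-1) \ge b^{n+1}-b$, i.e.\ $-b^n \ge -b$, which holds for $n \ge 1$). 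Lemma~\ref{lem:roots-rep} then gives $b$ values $w = \xi_a$ with $\xi_a = \big(a + \sum_{i\ge 0}(F_{n-1}(x^{n-1})^{-b} f)^{b^i}\big) F_{n-1}(x^{n-1})$, and the degree of each is $\deg f - (n-1)b^n + (n-1)b^{n-1} = \deg f - (n-1)b^n + (n-1)b^{n-1}$; I would then simplify this, keeping track carefully, to $\deg f - (n-1)(b^n - b^{n-1})$.

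Next, for each such value $\eta := \xi_a$ of $w = F_{n-1}(z)$, I need the roots $z$ of $F_{n-1}(z) = \eta$, i.e.\ of $F_{n-1}(z) + (-\eta)$. The plan is to apply the induction hypothesis to $F_{n-1} + (-\eta)$, which requires $\deg(\eta) < (n-1)b^{n-1} - (b^n - b)/(b-1)$. Plugging in $\deg \eta = \deg f - (n-1)(b^n - b^{n-1})$ and using $\deg f < nb^n - (b^{n+1}-b)/(b-1)$, I would verify this inequality by a direct arithmetic comparison — this bookkeeping of exponents is the main routine obstacle, but it should reduce to an identity. The induction hypothesis then yields $b^{n-1}$ roots of the form $a_{n-2}x^{n-2} + \cdots + a_0 + \xi'$ with $\deg \xi' = \deg \eta - (n-1)b^{n-1} + (b^n-b)/(b-1)$. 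Combining: the full set of roots of $F_n + f$ consists of all $a_{n-1}x^{n-1} + (a_{n-2}x^{n-2} + \cdots + a_0 + \xi')$ — here $a_{n-1}$ is recovered from the outer index $a$ via Lemma~\ref{lem:Fn-recursive}\eqref{eq:Fn-rec2}, which gives $F_{n-1}(z - a_{n-1}x^{n-1}) = F_{n-1}(z) - a_{n-1}F_{n-1}(x^{n-1})$, reconciling the shift by $a_{n-1}x^{n-1}$ with the index $a$ of $\xi_a$ — giving exactly $b\cdot b^{n-1} = b^n$ roots of the stated shape. Finally I would compute $\deg(\xi) = \deg(\xi') = \deg\eta - (n-1)b^{n-1} + (b^n-b)/(b-1) = \deg f - (n-1)(b^n-b^{n-1}) - (n-1)b^{n-1} + (b^n-b)/(b-1)$ and check this equals $\deg f - nb^n + (b^{n+1}-b)/(b-1)$, which is another short algebraic identity in $n$ and $b$. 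Distinctness of all $b^n$ roots follows since the $a_i \in \Fb$ determine distinct leading parts and, within a fixed choice of $a_i$'s, Lemma~\ref{lem:roots-rep} already guarantees the $b$ values are distinct. The degree bookkeeping across the two nested applications of Lemmas~\ref{lem:roots-rep} and the induction hypothesis is where I expect to have to be most careful; everything else is structural.
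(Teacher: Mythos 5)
Your overall strategy — induct on $n$, use Lemma~\ref{lem:Fn-recursive}\eqref{eq:Fn-rec1} to factor $F_n+f=\prod_{a=0}^{b-1}(F_{n-1}(z)-\xi_a)$ via Lemma~\ref{lem:roots-rep}, then recurse — is exactly the paper's. But there is a concrete error in your degree bookkeeping that you need to fix. You claim that each $\xi_a = \big(a+\sum_{i\ge0}(F_{n-1}(x^{n-1})^{-b}f)^{b^i}\big)F_{n-1}(x^{n-1})$ has degree $\deg f-(n-1)b^n+(n-1)b^{n-1}$. That is true \emph{only} for $a=0$; for $a\neq 0$ the dominant term is $a\,F_{n-1}(x^{n-1})$ (since $\deg(F_{n-1}(x^{n-1})^{-b}f)<0$), so $\deg\xi_a=(n-1)b^{n-1}\ge 0$. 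For such $\xi_a$ the induction hypothesis cannot be applied to $F_{n-1}-\xi_a$: its hypothesis requires degree strictly below $(n-1)b^{n-1}-(b^n-b)/(b-1)$, which fails. So the step ``for each such value $\eta:=\xi_a$ plug in $\deg\eta=\dots$'' is not valid for $a\neq 0$.

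The paper sidesteps this entirely by invoking the shift symmetry at the very start rather than at the end: from Lemma~\ref{lem:Fn-recursive}\eqref{eq:Fn-rec2}, if $\xi$ is any one root of $F_n(z)+f=0$ then so is $\xi + (a_{n-1}x^{n-1}+\dots+a_0)$ for every choice of $a_i\in\Fb$, and these $b^n$ elements are distinct because $\deg\xi<0$. Hence it ``suffices to prove that there exists $\xi$'' of the prescribed (negative) degree; the induction is then applied only to $F_{n-1}(z)-\xi_0$, which does satisfy the degree hypothesis. Your closing ``reconciliation'' via the outer index $a\leftrightarrow a_{n-1}$ is the same idea, but as currently written it sits alongside a false claim about $\deg\xi_a$ and an implied use of the induction hypothesis for all $a$. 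Restructure so that the shift argument comes first and reduces the problem to finding a single root, and the gap disappears; the remaining arithmetic identities you flagged do indeed check out.
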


\begin{proof}
It suffices to prove
that there exists $\xi \in \Lfield$ with $\deg(\xi) = \deg(f) - nb^n + (b^{n+1}-b)/(b-1)$
such that $F_n(\xi) + f = 0$,
since for such $\xi$ we have
$F_n(a_{n-1} x^{n-1} + \cdots + a_1 x + a_0 + \xi) + f = 0$ for all $a_i \in \mathbb{F}_b$ for all  $0 \leq i \leq n-1$.

Now we prove the lemma by induction on $n$.
The case $n=1$ is a direct consequence of Lemma~\ref{lem:roots-rep}.
Let now $n \ge 2$ and assume that the assertion holds for $n-1$. We prove the lemma for $n$.
%By Lemma~\ref{lem:Fn-recursive}, we have
%\[
%p_d(z) = F_{n-1}(z)^b - F_{n-1}(x^{n-1})^{b-1} F_{n-1}(z) + f.
%\]
%Thus Lemma~\ref{lem:roots-rep} implies
%$p_d(z) = \prod_{a=0}^{b-1} (F_{n-1}(z) - \tilde{f_a})$,
%is equivalent to $F_{n-1}(z) = \tilde{f_a}$ for some $a \in \Fb$,
%where
%$\tilde{f_a} := a + F_{n-1}(x^{n-1}) \cdot \sum_{i=1}^\infty (F_{n-1}(x^{n-1})^{-b} f)^i$.
Lemmas~\ref{lem:Fn-recursive} and \ref{lem:roots-rep} imply
\[
F_n(z) + f
= F_{n-1}(z)^b - F_{n-1}(x^{n-1})^{b-1} F_{n-1}(z) + f
= \prod_{a=0}^{b-1} (F_{n-1}(z) - {f_a}),
\]
where
$f_a := F_{n-1}(x^{n-1}) (a+ \sum_{i=0}^\infty (F_{n-1}(x^{n-1})^{-b} f)^{b^{i}})$
and the assumption in Lemma~\ref{lem:roots-rep} is satisfied since
$$\deg(F_{n-1}(x^{n-1})^{-b} f) = \deg(f) - b^n(n-1) < n b^n - \frac{b^{n+1}-b}{b-1}-n b^n+b^n=\frac{b-b^n}{b-1}\le 0.$$
We consider the polynomial $F_{n-1}(z) - {f_0}$.
The degree of ${f_0}$ is given by
\begin{align}
\deg({f_0})
&= \deg(F_{n-1}(x^{n-1})^{-b+1}) + \deg(f) \label{eq:deg-f0}\\
&< -(n-1)(b-1)b^{n-1} + nb^n - \frac{b^{n+1}-b}{b-1} \notag \\
&= (n-1)b^{n-1} - \frac{b^n-b}{b-1}. \notag
\end{align}
Hence we can apply the induction assumption for $n-1$ to the polynomial $F_{n-1}(z) - {f_0}$,
which implies that it has a root $\xi$ with
\[
\deg(\xi) = \deg({f_0}) - (n-1)b^{n-1} + \frac{b^{n}-b}{b-1} = \deg(f) - nb^n + \frac{b^{n+1}-b}{b-1},
\]
where the last equality follows from \eqref{eq:deg-f0}.
This is the $\xi$ whose existence we intended to show.
\end{proof}

The following lemma shows the irreducibility of $p_d$ in $\Fb(x)[z]$, where $\Fb(x)$ is the field of rational 
functions of the form $f/g$ with $f,g \in \Fb[x]$, $g \not=0$. 
This property implies that $p_d$ is the minimal polynomial of $\xi_j$ over $\Fb[x]$ 
for all $1 \le j \le d$. 

\begin{lemma}\label{le:irred}
For all $n \in \bN$ the polynomial $p_d = F_n + x^{-1} \in \Fb(x)[z]$
is irreducible.
\end{lemma}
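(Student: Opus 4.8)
The plan is to reduce the statement to Eisenstein's criterion (Lemma~\ref{lem:Eisenstein}) over the unique factorization domain $\Fb[x]$, whose quotient field is $\Fb(x)$, at the prime $x$. A direct application is impossible because $p_d = F_n + x^{-1}$ does not even lie in $\Fb[x][z]$; the remedy will be to reciprocate.

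First I would record the elementary properties of $F_n$ that are immediate from \eqref{eq:Fn} (or from Lemma~\ref{lem:Fn-recursive}): $F_n$ is monic of degree $d = b^n$, all of its coefficients lie in $\Fb[x]$, and its constant term is $0$ (the factor indexed by $a_0 = \dots = a_{n-1} = 0$ equals $z$). Hence $P(z) := x\, p_d(z) = x\, F_n(z) + 1 \in \Fb[x][z]$ has leading coefficient $x$, constant term $1$, and every other coefficient divisible by $x$. Since $P$ and $p_d$ are associates in $\Fb(x)[z]$, and irreducibility is invariant under associates, it suffices to show that $P$ is irreducible in $\Fb(x)[z]$.

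Next I would pass to the reciprocal polynomial $P^\ast(z) := z^d P(1/z)$. Because $P$ has degree $d$ and $P(0) = 1 \neq 0$, the polynomial $P^\ast$ is again of degree $d$; more precisely it is monic, its constant term is the former leading coefficient $x$ of $P$, and each of its remaining coefficients is $x$ times a polynomial in $\Fb[x]$ (namely the former coefficients of $P$, several of which vanish). Thus $P^\ast$ satisfies Eisenstein's criterion at $x$: the prime $x$ divides every non-leading coefficient, and $x^2 \nmid x$, the constant term. By Lemma~\ref{lem:Eisenstein}, $P^\ast$ is irreducible over $\Fb(x)$. Finally, since $P^\ast(0) = x \neq 0$, any nontrivial factorization $P = gh$ in $\Fb(x)[z]$ would reciprocate to a factorization $P^\ast = g^\ast h^\ast$; here $g(0)h(0) = P(0) = 1 \neq 0$, so $g$ and $h$ have nonzero constant terms, reciprocation preserves their degrees, and $g^\ast, h^\ast$ are again non-units. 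This contradicts the irreducibility of $P^\ast$. Hence $P$, and therefore $p_d$, is irreducible over $\Fb(x)$.

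The argument is essentially bookkeeping; the only genuine idea — and the place I expect a reader to pause — is realizing that one should reciprocate $x\, p_d$ so that the awkwardly placed term $x^{-1}$ turns into an honest Eisenstein constant term. It is worth double-checking a small case, for instance $n=1$, where $F_1(z) = z^b - z$, so $P(z) = x z^b - x z + 1$ and $P^\ast(z) = z^b - x z^{b-1} + x$, to confirm that the Eisenstein hypotheses really hold and that no lower coefficient collides with the leading term $z^d$.
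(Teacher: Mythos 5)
Your proof is correct, and it takes a genuinely different route to Eisenstein than the paper does. The paper clears the denominator by multiplying $p_d$ by $x^{(c+1)/c}$ with $c=b^n-1$ and simultaneously substituting $z'=x^{1/c}z$, which lands it in the auxiliary ring $\Fb[x^{1/c}][z']$ where the polynomial becomes Eisenstein at the prime $x^{1/c}$; irreducibility over $\Fb(x^{1/c})$ then descends to $\Fb(x)$. You avoid the fractional-power extension entirely: you observe that $xp_d\in\Fb[x][z]$ is ``Eisenstein at the wrong end'' (leading coefficient $x$, interior coefficients divisible by $x$, constant term $1$), so its reciprocal $z^d\,(x p_d)(1/z)$ is honestly Eisenstein at $x$ in $\Fb[x][z]$; since $(xp_d)(0)=1\neq 0$, reciprocation carries any nontrivial factorization of $xp_d$ to one of the reciprocal, yielding the contradiction. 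Both arguments hinge on the same two facts about $F_n$ --- monic of degree $d$ with coefficients in $\Fb[x]$ and vanishing constant term --- but your version stays inside $\Fb(x)$ and is the more elementary of the two; the paper's version trades the reciprocation step for a (mildly exotic) degree-$c$ radical extension of the coefficient field.

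One small point worth making explicit when you write this up: after obtaining the factorization $P^\ast=g^\ast h^\ast$, note that $g^\ast$ and $h^\ast$ have the same degrees as $g$ and $h$ precisely because $g(0)h(0)=P(0)=1\neq 0$ forces $g(0)\neq 0$ and $h(0)\neq 0$; without that observation the degree-preservation (and hence the conclusion that $g^\ast,h^\ast$ are non-units) would not be justified. You do say this, but it is the one place where the bookkeeping is load-bearing rather than cosmetic.
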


\begin{proof}
Put $c := b^n-1$. It suffices to show that
$x^{(c+1)/c}p_d(z)$ is irreducible in $\Fb(x^{1/c})[z]$.
We have
\begin{align}
x^{(c+1)/c}p_d(z)
&= \prod_{a_0,\ldots,a_{n-1}=0}^{b-1} (x^{1/c}z-x^{1/c}(a_0 + a_1 x + \dots + a_{n-1} x^{n-1})) + x^{1/c} \notag\\
&= \prod_{a_0,\ldots,a_{n-1}=0}^{b-1} (z'-x^{1/c}(a_0 + a_1 x + \dots + a_{n-1} x^{n-1})) + x^{1/c}, \label{eq:Irrlem}
\end{align}
where we put $z' := x^{1/c}z$.
Thus it reduces to show the irreducibility of \eqref{eq:Irrlem} in $\Fb(x^{1/c})[z']$.
This follows by Eisenstein's criterion (Lemma~\ref{lem:Eisenstein}) with $p = x^{1/c}$.
\end{proof}

Now we determine the degree of the determinant of the matrix $B$ from \eqref{eq:B}.

\begin{lemma}\label{lem:detB}
Let $n \in \bN$ and let $B$ be the $b^n \times b^n$ matrix from \eqref{eq:B}.
Then 
\[
\deg(\det B) = \frac{b^n}{2} \left((n-1)b^n - \frac{b^n - b}{b-1}\right).
\]
\end{lemma}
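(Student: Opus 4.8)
The plan is to combine the Vandermonde structure of $B$ with the explicit description of the roots of $p_d$ provided by Lemma~\ref{lem:roots-exist}. Since $B=(\xi_i^{j-1})_{i,j=1}^d$ is a Vandermonde matrix, its determinant factors as $\det B=\prod_{1\le i<j\le d}(\xi_j-\xi_i)$, and, as $\deg$ is additive on products, it follows that $\deg(\det B)=\sum_{1\le i<j\le d}\deg(\xi_j-\xi_i)$. (The ordering of the roots plays no role, since a permutation changes $\det B$ only by a sign, which does not affect its degree.)

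Next I would apply Lemma~\ref{lem:roots-exist} with $f=x^{-1}$ and $d=b^n$; its hypothesis $\deg(x^{-1})<nb^n-(b^{n+1}-b)/(b-1)$ holds for every $n\in\bN$. This yields a fixed element $\xi\in\Lfield$ such that the roots of $p_d=F_n+x^{-1}$ are precisely the $b^n$ elements $P+\xi$ with $P\in\Pring$, $\deg P<n$. Hence each difference $\xi_i-\xi_j$ equals $P_i-P_j$ for two distinct polynomials $P_i,P_j$ of degree $<n$; in particular $\xi_i-\xi_j$ is a nonzero polynomial of degree at most $n-1$, the term $\xi$ having cancelled.

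It then remains to evaluate the resulting combinatorial sum. Passing to ordered pairs, $\sum_{1\le i<j\le d}\deg(\xi_j-\xi_i)=\tfrac12\sum_{i\neq j}\deg(P_i-P_j)$. For each fixed nonzero $Q\in\Pring$ with $\deg Q<n$ there are exactly $b^n$ ordered pairs $(P_i,P_j)$ of polynomials of degree $<n$ with $P_i-P_j=Q$ (choose $P_j$ freely and set $P_i=Q+P_j$), so the sum equals $\tfrac12\,b^n\sum_{0\neq Q,\ \deg Q<n}\deg Q$. Grouping the polynomials $Q$ according to their degree $k$ (there being $(b-1)b^k$ of degree exactly $k$, for $0\le k\le n-1$) and using the standard evaluation $\sum_{k=0}^{n-1}kb^k=\frac{(n-1)b^{n+1}-nb^n+b}{(b-1)^2}$, one gets
\[
\sum_{0\neq Q,\ \deg Q<n}\deg Q=(b-1)\sum_{k=0}^{n-1}kb^k=(n-1)b^n-\frac{b^n-b}{b-1}.
\]
Substituting back gives $\deg(\det B)=\frac{b^n}{2}\left((n-1)b^n-\frac{b^n-b}{b-1}\right)$, as claimed.

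I do not anticipate a genuine difficulty here: the only steps requiring care are verifying the hypothesis of Lemma~\ref{lem:roots-exist} for $f=x^{-1}$ and carrying out the elementary summation of $\sum_{k=0}^{n-1}kb^k$; everything else is bookkeeping with the Vandermonde formula.
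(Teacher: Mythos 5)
Your proof is correct and follows essentially the same route as the paper: expand $\det B$ via the Vandermonde formula, use Lemma~\ref{lem:roots-exist} to write each difference $\xi_i-\xi_j$ as a nonzero polynomial of degree $<n$ (the common $\xi$ cancelling), and evaluate the resulting sum of degrees. The paper simply fixes one index $i$ and notes that $\{\xi_j-\xi_i : j\neq i\}$ enumerates all nonzero polynomials of degree $<n$, whereas you count ordered pairs; this is the same bookkeeping.
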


\begin{proof}
Lemma~\ref{lem:roots-exist} implies that for fixed $i \in \{1, \ldots,b^n\}$ we have
\begin{align*}
\deg \left(\prod_{\substack{j=1 \\ j \neq i}}^{b^n}(\xi_j -\xi_i)\right)
&= \sum_{\substack{a_0,\ldots,a_{n-1}=0\\ (a_0,\ldots,a_{n-1})\not=(0,\ldots,0)}}^{b-1} \deg(a_{n-1} x^{n-1} + \cdots + a_1 x + a_0)\\
%&= \sum_{k=0}^{n-1} (k-1) (b^k - b^{k-1}) 
&= (n-1)b^n - \frac{b^n - b}{b-1}.
\end{align*}		
Thus
\begin{align*}
\deg(\det B)
&= \frac{1}{2} \deg\left(\prod_{\substack{i,j =1\\ j\not= i}}^{b^n} (\xi_j -\xi_i)\right)= \frac{b^n}{2}  \left((n-1)b^n - \frac{b^n - b}{b-1}\right).
\end{align*}
where the first equality follows form the well known result for the determinant of a Vandermonde matrix. This proves the result.
\end{proof}

The following lemma is needed to show the admissibility of our lattice.

\begin{lemma}\label{lem:admissible-bn}
Let $d=b^n$ and $\bszeta = (\zeta_1, \dots, \zeta_d) \in B(\Pring^d)\setminus\{\bszero\}$ with $B$ from \eqref{eq:B}.
Then $\deg(\prod_{j=1}^d \zeta_j) \geq 1-d$ holds.
In particular, the lattice $\bbX$ constructed as in Theorem~\ref{thm:construction} is admissible with $\adnum(\bbX) \geq 1-d$.
\end{lemma}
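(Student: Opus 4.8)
The plan is to mimic the classical argument for Frolov lattices: an element $\bszeta \in B(\Pring^d)$ corresponds to a polynomial evaluated at the roots $\xi_1,\dots,\xi_d$, and the product $\prod_j \zeta_j$ is (up to sign) a norm/resultant-type quantity whose degree we can bound from below using the irreducibility of $p_d$. Concretely, write $\bszeta = B\bsh$ for some $\bsh = (h_0,\dots,h_{d-1})^\top \in \Pring^d \setminus \{\bszero\}$. Then, by the definition of the Vandermonde matrix $B$ in \eqref{eq:B}, the $i$-th component is $\zeta_i = \sum_{k=0}^{d-1} h_k \xi_i^{k} = H(\xi_i)$, where $H(z) := \sum_{k=0}^{d-1} h_k z^k \in \Pring[z]$ is a nonzero polynomial of degree at most $d-1$. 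Hence
\[
\prod_{j=1}^d \zeta_j = \prod_{j=1}^d H(\xi_j).
\]

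First I would observe that $\prod_{j=1}^d H(\xi_j)$ is, up to the leading coefficient of $p_d$ (which is $1$ since $p_d = F_n + x^{-1}$ is monic in $z$), the resultant $\Res_z(p_d, H)$, and in particular it lies in $\Pring = \Fb[x]$: being a symmetric function of the roots $\xi_1,\dots,\xi_d$ of the monic polynomial $p_d \in \Fb(x)[z]$, it is a polynomial in the coefficients of $p_d$ by the fundamental theorem on symmetric polynomials (Lemma~\ref{lem:sympoly}), hence an element of $\Fb(x)$; but it is also integral over $\Fb[x]$ (a product of things integral over $\Fb[x]$, since each $\xi_j$ is a root of the monic $p_d \in \Fb[x][z]$), and $\Fb[x]$ is integrally closed, so $\prod_j H(\xi_j) \in \Fb[x]$. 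Second, since $H \neq 0$ has degree $\le d-1 < d = \deg_z p_d$ and $p_d$ is irreducible over $\Fb(x)$ by Lemma~\ref{le:irred}, $p_d \nmid H$ in $\Fb(x)[z]$, so none of the $\xi_j$ is a root of $H$; therefore $\prod_j H(\xi_j) \neq 0$. Third, a nonzero element of $\Fb[x]$ has degree $\ge 0$, i.e.
\[
\deg\Bigl(\prod_{j=1}^d \zeta_j\Bigr) = \deg\Bigl(\prod_{j=1}^d H(\xi_j)\Bigr) \ge 0 \ge 1 - d.
\]
This proves the first assertion.

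For the ``in particular'' part: by definition $\bbX = T(\Pring^d)$ with $T = (B^{-1})^\top$, so $\DinL{\bbX} = B(\Pring^d)$ (as recorded in Section~\ref{sec:notation}). Thus for every $\bszeta = (\zeta_1,\dots,\zeta_d) \in \DinL{\bbX}\setminus\{\bszero\}$ we have just shown $\sum_{j=1}^d \deg(\zeta_j) = \deg(\prod_{j=1}^d \zeta_j) \ge 1-d$ — here I use that $\deg$ is additive on products and that $\zeta_j \ne 0$ for every $j$, which follows since $\zeta_j = H(\xi_j) \ne 0$ as argued above (so no $\deg(\zeta_j) = -\infty$ term appears). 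Taking the infimum over $\bszeta$ gives $\adnum(\bbX) \ge 1-d > -\infty$, so $\bbX$ is admissible by Definition~\ref{def:adm}.

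I expect the only genuinely delicate point to be justifying rigorously that $\prod_{j=1}^d H(\xi_j) \in \Fb[x]$ and is nonzero; everything else is bookkeeping. The cleanest route is the integrality argument above (product of elements integral over $\Fb[x]$, living in $\Fb(x)$, hence in the integrally closed ring $\Fb[x]$), combined with the irreducibility Lemma~\ref{le:irred} to rule out vanishing; alternatively one can phrase it via the resultant $\Res_z(p_d,H)$ and its standard properties. A subtle prerequisite I should not forget is that $p_d$ actually \emph{has} $d$ distinct roots $\xi_1,\dots,\xi_d$ in $\Lfield$, so that $B$ is well-defined and invertible — but this is exactly Lemma~\ref{lem:roots-exist} applied with $f = x^{-1}$ (whose degree $-1$ easily satisfies the hypothesis $\deg f < nb^n - (b^{n+1}-b)/(b-1)$ for all $n \ge 1$), and distinctness follows since the $d$ roots differ by the pairwise-distinct polynomials $a_{n-1}x^{n-1}+\dots+a_0$.
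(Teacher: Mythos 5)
Your argument has a genuine gap, and it is located exactly where you flagged a ``delicate point''. You claim that $\prod_{j=1}^d H(\xi_j) \in \Fb[x]$ because each $\xi_j$ is ``a root of the monic $p_d \in \Fb[x][z]$'', so that the product is integral over $\Fb[x]$ and lies in the integrally closed ring $\Fb[x]$. But $p_d$ does \emph{not} lie in $\Fb[x][z]$: by~\eqref{eq:pd} and~\eqref{eq:Fn} its constant term in $z$ is $F_n(0) + x^{-1} = 0 + x^{-1} = x^{-1}$, which is not a polynomial. (The factor with $(a_0,\dots,a_{n-1}) = (0,\dots,0)$ makes $F_n(0)=0$.) Consequently the $\xi_j$ are \emph{not} integral over $\Fb[x]$, the resultant $\prod_j H(\xi_j)$ need not lie in $\Fb[x]$, and your conclusion $\deg(\prod_j \zeta_j) \ge 0$ is false. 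A concrete counterexample: take $\bsh = (0,\dots,0,1)$, i.e. $H(z) = z^{d-1}$. Then
\[
\prod_{j=1}^d H(\xi_j) = \Bigl(\prod_{j=1}^d \xi_j\Bigr)^{d-1} = \bigl((-1)^d x^{-1}\bigr)^{d-1} = \pm x^{1-d},
\]
which has degree $1-d < 0$, showing both that $\prod_j \zeta_j \notin \Fb[x]$ in general and that the bound $1-d$ in the lemma is sharp.

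The paper's proof is precisely engineered around this defect. It writes $p = \prod_j H(\xi_j)$ as a symmetric polynomial in the roots and expresses it via Lemma~\ref{lem:sympoly} as $q(e_1,\dots,e_d)$ with $q \in \Pring[y_1,\dots,y_d]$. By Vieta's formulas (see~\eqref{eq:coeff-vieta}) all elementary symmetric polynomials $e_1,\dots,e_{d-1}$ lie in $\Pring$, and only $e_d = (-1)^d x^{-1}$ escapes. Since $\deg_z H \le d-1$ forces each $\xi_j$ to appear in $p$ with total degree at most $d-1$, the exponent of $e_d$ in every monomial of $q$ is at most $d-1$, so $p = g\,x^{1-d}$ with $g \in \Pring$. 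The nonvanishing of $p$ (which your irreducibility argument via Lemma~\ref{le:irred} does establish correctly) then gives $\deg(p) = \deg(g) + 1 - d \ge 1-d$. In short: your route is salvageable only by keeping careful track of the single non-integral coefficient $e_d = \pm x^{-1}$, which is exactly what the symmetric-polynomial bookkeeping in the paper does; the bare integrality argument overshoots and arrives at a statement that is simply not true. Everything else in your write-up (the reduction to $H(\xi_j)$, the nonvanishing via irreducibility, the reduction of the ``in particular'' part to the degree bound, and the use of Lemma~\ref{lem:roots-exist} with $f = x^{-1}$ to ensure $B$ is well-defined) is correct.
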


\begin{proof}
Fix $\bsf = (f_1, \dots, f_d) \in \Pring\setminus\{\bszero\}$ such that $B\bsf = \bszeta$.
Then $$\zeta_j = f_1 + f_2 \xi_j + \cdots + f_d \xi_j^{d-1}.$$

First we show that $\zeta_j \neq 0$ for all $1 \le j \le d$.
According to Lemma~\ref{le:irred} the polynomial $p_d$ is the minimal polynomial of $\xi_j$ for all $1 \le j \le d$. 
Hence there is no polynomial $r \in \Fb(x^{-1})[z]$ with $\deg(r) < d$ and $r(\xi_j) = 0$.
This implies that $\zeta_j \neq 0$.

We now put $p := \prod_{j=1}^d \zeta_j$
and show that $\deg(p) \geq 1-d$.
Note that $\zeta_j \neq 0$ for all $j$ implies $p \neq 0$.
Since $p$ is a symmetric polynomial in $\xi_1, \dots, \xi_d$ with coefficients in $\Pring$,
there exists a polynomial $q \in \Pring[y_1, \dots, y_d]$
such that $p = q(e_1, \dots, e_d)$ where $e_i := e_i(\xi_1, \dots, \xi_d)$, see Lemma~\ref{lem:sympoly}.
Comparing the degree of $\xi_j$'s on both sides,
we have that the multiplicity of $e_d$ in $q(e_1, \dots, e_d)$ is at most $d-1$.
Further, Vieta's formulas imply that
\begin{equation} \label{eq:coeff-vieta}
e_i = e_i(\xi_1, \dots, \xi_d) = 
\begin{cases}
(-1)^{d} x^{-1} & \text{if $i=d$},\\
\text{some element in $\Pring$} & \text{otherwise}.
\end{cases}
\end{equation}
Hence each term in $q(e_1, \dots, e_d)$, and thus $p=q(e_1, \dots, e_d)$ itself,
can by written as $g x^{1-d}$ with some $g \in \Pring$. 
This and $p \neq 0$ prove $\deg(p) \geq 1-d$.
\end{proof}

\begin{proof}[Proof of Theorem~\ref{thm:construction}]
The result is a direct consequence of Theorem~\ref{thm:t-value-formula} combined with Lemmas~\ref{lem:detB} and \ref{lem:admissible-bn}. 
\end{proof}

%\begin{remark}
%Takehito insists that $G(T) = T (T-x)(T-x^2)\dots(T-x^d) - x^{d(d+1)/2}$ is irreducible over $\F2$
%and has $d$ roots in $\Lfield$.
%\end{remark}

%\section*{Acknowledgments}

\goodbreak

%\bibliography{DFrolov}

\begin{thebibliography}{99}

\bibitem{Dick2013fcw}
J.~Dick and M.~Matsumoto.
\newblock On the fast computation of the weight enumerator polynomial and the $t$ value of digital nets over finite abelian groups.
\newblock {\em SIAM J. Discrete Math.}, 27(3): 1335--1359, 2013.

\bibitem{Dick2010dna}
J.~Dick and F.~Pillichshammer.
\newblock {\em Digital nets and sequences: Discrepancy theory and quasi-Monte
  Carlo integration}.
\newblock Cambridge University Press, Cambridge, 2010.

\bibitem{Du97} 
V.~V.~Dubinin. 
\newblock Cubature formulae for Besov classes.
\newblock {\em Izvestiya Math}, 61(2): 259--83, 1997.

\bibitem{Faure1982dds}
H.~Faure.
\newblock Discr\'epance de suites associ\'ees \`a un syst\`eme de num\'eration
  (en dimension {$s$}).
\newblock {\em Acta Arith.}, 41(4): 337--351, 1982.

\bibitem{Fine1950gwf}
N.~J.~Fine.
\newblock The generalized {W}alsh functions.
\newblock {\em Trans. Amer. Math. Soc.}, 69: 66--77, 1950.

\bibitem{Forney2004dgc}
G.~D.~Forney and M.~D.~Trott.
\newblock The dynamics of group codes: dual abelian group codes and systems.
\newblock {\em IEEE Trans. Inform. Theory}, 50(12): 2935--2965, 2004.

\bibitem{Fr76}
K.~K.~Frolov.
Upper error bounds for quadrature formulas on function classes.
\emph{Dokl. Akad. Nauk SSSR}, 231: 818--821, 1976.

\bibitem{Fr80}
K.~K.~Frolov.
Upper bound of the discrepancy in metric {$L_{p}$}, {$2\leq p<\infty $}.
\emph{Dokl. Akad. Nauk SSSR}, 252: 805--807, 1980.
    

\bibitem{Goda2016dni}
T.~Goda, K.~Suzuki, and T.~Yoshiki.
\newblock Digital nets with infinite digit expansions and construction of
  folded digital nets for quasi-{M}onte {C}arlo integration.
\newblock {\em J. Complexity}, 33: 30--54, 2016.


\bibitem{GSY16}
T.~Goda, K.~Suzuki, and T.~Yoshiki. 
\newblock An explicit construction of optimal order quasi-Monte Carlo rules for smooth integrands. 
\newblock {\em SIAM Journal on Numerical Analysis}, 54: 2664--2683, 2016.


\bibitem{GSY17}
T.~Goda, K.~Suzuki, and T.~Yoshiki. 
\newblock Optimal order quadrature error bounds for infinite-dimensional higher-order digital sequences, 
\newblock To appeared in Foundations of Computational Mathematics, 2018.


\bibitem{KOUU17}
C.~Kacwin, J.~Oettershagen, M.~Ullrich, and T.~Ullrich. 
\newblock Efficient implementation and numerical testing of Frolov’s cubature rule for
functions with dominating mixed smoothness.
\newblock {\em preprint}.

\bibitem{KOU17}
C. Kacwin, J. Oettershagen, and T. Ullrich. 
\newblock On the orthogonality of the Chebyshev-Frolov lattice and applications, 
\newblock {\em Monatsh. Math.}, 184: 425--441, 2017.

\bibitem{Lang1965a}
S.~Lang.
\newblock {\em Algebra}.
\newblock Addison-Wesley Publishing Co., Inc., Reading, Mass., 1965.

\bibitem{Lev2010}
M.~B.~Levin.
\newblock Adelic constructions of low discrepancy sequences.
\newblock Online Journal of Analytic Combinatorics. Issue 5, 2010.
\texttt{http://web.math.rochester.edu/misc/ojac/}

\bibitem{Lev2017}
M.~B.~Levin.
\newblock On the lower bound on the discrepancy of $(t,s)$-sequences: II.
\newblock Online Journal of Analytic Combinatorics, Issue 12, 2017. \texttt{http://web.math.rochester.edu/misc/ojac/}

\bibitem{Macdonald1995sfh}
I.~G.~Macdonald.
\newblock {\em Symmetric functions and {H}all polynomials}.
\newblock Oxford Mathematical Monographs. The Clarendon Press, Oxford
  University Press, New York, second edition, 1995.
\newblock With contributions by A. Zelevinsky, Oxford Science Publications.

\bibitem{mahler1941}
K.~Mahler.
\newblock An analogue to Minkowski's geometry of numbers in a field of series.
\newblock {\em Ann. of Math.}, 42: 488--522, 1941.

\bibitem{NUU17}
V.~K.~Nguyen, M.~Ullrich, and T.~Ullrich.
Change of variable in spaces of mixed smoothness and numerical integration 
of multivariate functions on the unit cube.
\emph{Constr. Appr.}, 46: 69--108, 2017.

\bibitem{Niederreiter1988ldl}
H.~Niederreiter.
\newblock Low-discrepancy and low-dispersion sequences.
\newblock {\em J. Number Theory}, 30(1): 51--70, 1988.

\bibitem{Niederreiter1992ldp}
H.~Niederreiter.
\newblock Low-discrepancy point sets obtained by digital constructions over
  finite fields.
\newblock {\em Czechoslovak Math. J.}, 42(117): 143--166, 1992.

\bibitem{Niederreiter1992rng}
H.~Niederreiter.
\newblock {\em Random number generation and quasi-{M}onte {C}arlo methods},
  volume~63 of {\em CBMS-NSF Regional Conference Series in Applied
  Mathematics}.
\newblock Society for Industrial and Applied Mathematics (SIAM), Philadelphia,
  PA, 1992.

\bibitem{Niederreiter2001ddn}
H.~Niederreiter and G.~Pirsic.
\newblock Duality for digital nets and its applications.
\newblock {\em Acta Arith.}, 97(2): 173--182, 2001.

\bibitem{Niederreiter2001rpo}
H.~Niederreiter and C.~Xing.
\newblock {\em Rational points on curves over finite fields: theory and
  applications}, volume 285 of {\em London Mathematical Society Lecture Note
  Series}.
\newblock Cambridge University Press, Cambridge, 2001.

\bibitem{Pontryagin1966tg}
L.~S.~Pontryagin.
\newblock {\em Topological groups}.
\newblock Translated from the second Russian edition by Arlen Brown. Gordon and
  Breach Science Publishers, Inc., New York-London-Paris, 1966.

\bibitem{Skriganov1994cud}
M.~M.~Skriganov.
\newblock Constructions of uniform distributions in terms of geometry of
  numbers.
\newblock {\em Algebra i Analiz}, 6(3): 200--230, 1994.

\bibitem{Sloan1994lmm}
I.~H.~Sloan and S.~Joe.
\newblock {\em Lattice methods for multiple integration}.
\newblock Oxford Science Publications. The Clarendon Press, Oxford University
  Press, New York, 1994.

\bibitem{Sobolcprime1967dpi}
I.~M.~Sobol.
\newblock Distribution of points in a cube and approximate evaluation of
  integrals.
\newblock {\em \u Z. Vy\v cisl. Mat. i Mat. Fiz.}, 7: 784--802, 1967.

\bibitem{Te93}
V.~N.~Temlyakov. 
\newblock \emph{Approximation of periodic functions}.
\newblock  Computational Mathematics and Analysis Series, Nova Science Publishers, Inc., Commack, NY, 1993.

\bibitem{Ullrich2016ueb}
M.~Ullrich.
\newblock On ``{U}pper error bounds for quadrature formulas on function
  classes'' by {K}. {K}. {F}rolov.
\newblock In: R.~Cools and D.~Nuyens, editors, {\em Monte {C}arlo and
  quasi-{M}onte {C}arlo methods 2014}, volume 163 of {\em Springer Proc. Math.
  Stat.}, pages 571--582. Springer, Heidelberg, 2016.

\bibitem{Ullrich2017mc}
M.~Ullrich.
\newblock A Monte Carlo method for integration of multivariate smooth functions.
\newblock \emph{SIAM J. Numer. Anal.}, 55(3): 1188--1200, 2017.

\bibitem{Ullrich2016rfc}
M.~Ullrich and T.~Ullrich.
\newblock The role of {F}rolov's cubature formula for functions with bounded
  mixed derivative.
\newblock {\em SIAM J. Numer. Anal.}, 54(2): 969--993, 2016.

\end{thebibliography}
%\bibliographystyle{abbrv}

\noindent\textbf{Authors' addresses:}\\

\noindent {\sc Josef Dick}

\noindent School of Mathematics and Statistics, The University of New South Wales, Sydney NSW 2052, Australia, 
email: josef.dick(AT)unsw.edu.au\\

\noindent {\sc Friedrich Pillichshammer}

\noindent Institut f\"ur Finanzmathematik und Angewandte Zahlentheorie, Johannes Kepler Universit\"at Linz, Altenbergerstra{\ss}e 69, 4040 Linz, Austria,
email: friedrich.pillichshammer(AT)jku.at\\

\noindent {\sc Kosuke Suzuki}

\noindent 
Graduate School of Science, Hiroshima University.
1-3-1 Kagamiyama, Higashi-Hiroshima, 739-8526, Japan,
email: kosuke-suzuki(AT)hiroshima-u.ac.jp\\

\noindent {\sc Mario Ullrich}

\noindent Institut f\"ur Analysis, Johannes Kepler Universit\"at Linz, Altenbergerstra{\ss}e 69, 4040 Linz, Austria, 
email: mario.ullrich(AT)jku.at\\

\noindent{\sc Takehito Yoshiki} 

\noindent{Department of Applied Mathematics and Physics, Graduate School of Informatics, Kyoto University, Kyoto 606-8561, Japan, email: yoshiki.takehito.47x(AT)st.kyoto-u.ac.jp}
\end{document}